
\documentclass{article}
\usepackage{latexsym}
\usepackage{amsmath}
\usepackage{amsthm}
\usepackage{amscd}
\usepackage{amsfonts}
\usepackage{amssymb}
\usepackage{dsfont}
\usepackage{bm} 
\usepackage{bbm} 
\usepackage{float} 

\theoremstyle{plain}   
\theoremstyle{plain}   \newtheorem{Lem}{Lemma}
\theoremstyle{plain} 	
\theoremstyle{plain} 	\newtheorem{The}{Theorem}
\theoremstyle{plain} 	\newtheorem{Prop}{Proposition}
\theoremstyle{plain} 	
\theoremstyle{plain}	\newtheorem*{Rem}{Remark}
\theoremstyle{plain}	\newtheorem*{Rems}{Remarks}
\theoremstyle{plain}	\newtheorem{Def}{Definition} 
\theoremstyle{plain}	
\theoremstyle{plain}   \newtheorem{Cla}{Claim}
\theoremstyle{plain} 
\theoremstyle{plain} 




\def\ceil#1{\left\lceil#1\right\rceil} 
\def\floor#1{\left\lfloor#1\right\rfloor} 

\def\iff{\Longleftrightarrow}
\def\indicator{\mathds{1}}

\def\Var{\mbox{Var}}


\def\iB{\textbf{i}}


\def\E{\mathbb{E}}

\def\N{\mathbb{N}}

\def\P{\mathbb{P}}

\def\R{\mathbb{R}}

\def\Z{\mathbb{Z}}


\def\SM{\mathcal{S}}
\def\TM{\mathcal{T}}


\def\Ct{\widetilde{C}}




\bibliographystyle{unsrt}

\def\Int{I_t^{(n)}} 
\def\In{I^{(n)}}
\def\Xn{X^{(n)}}
\def\Xnt{X_t^{(n)}}

\begin{document}

\title{Inversions and Longest Increasing Subsequence for $k$-Card-Minimum Random Permutations}

\author{Nicholas F. Travers \thanks{Department of Mathematics, Technion--Israel Institute of Technology.  
E-mail - \texttt{travers@tx.technion.ac.il}. }}

\date{} 
\maketitle

\vspace{-4 mm}
\begin{abstract}

A random $n$-permutation may be generated by sequentially removing random cards $C_1,...,C_n$ from 
an $n$-card deck $D = \{1,...,n\}$. The permutation $\sigma$ is simply the sequence of cards in the order they are removed.
This permutation is itself uniformly random, as long as each random card $C_t$ is drawn uniformly from the remaining set at 
time $t$. We consider, here, a variant of this simple procedure in which one is given a choice between $k$ random cards 
from the remaining set at each step, and selects the lowest numbered of these for removal. This induces 
a bias towards selecting lower numbered of the remaining cards at each step, and therefore leads to a final 
permutation which is more ``ordered'' than in the uniform case (i.e. closer to the identity permutation id $=(1,2,3,...,n)$).

We quantify this effect in terms of two natural measures of order: The number of inversions $I$ and the length
of the longest increasing subsequence $L$. For inversions, we establish a weak law of large numbers and central limit
theorem, both for fixed and growing $k$. For the longest increasing subsequence, we establish the rate of scaling,
in general, and existence of a weak law in the case of growing $k$. We also show that the minimum strategy, of 
selecting the minimum of the $k$ given choices at each step, is optimal for minimizing the number of inversions
in the space of all online $k$-card selection rules.  
\end{abstract}

\section{Introduction}
\label{sec:Introduction}

A random $n$-permutation may be generated with a deck of $n$ cards $D = \{1,...,n\}$ as follows. Draw a random card $C_1$ 
from the deck and remove it, then draw and remove another random card $C_2$ from the remaining cards, and so forth until all 
$n$ cards have been removed. The permutation is $\sigma = (C_1,...,C_n)$, where $C_t$ is the card removed at time $t$. 
This permutation is itself uniformly random, as long as each random card $C_t$ is drawn uniformly from the remaining
cards in the deck at time $t$. 

If, however, one is given a choice between $k \geq 2$ (uniformly) random cards to remove at each step
one can bias the resulting permutation by an appropriate selection rule to achieve a particular objective. For example, 
one can seek to maximize the number of fixed points, number of cycles, length of the the longest cycle, ... etc. Our aim 
here is to create a permutation which is as ``ordered'' as possible. That is, closest to the identity permutation 
id $ = (1,2,3,...,n)$. For this, we choose the natural strategy of selecting the lowest numbered, or minimum, 
of the $k$ random card choices at each step. 

We refer to the resulting procedure for constructing our random permutation as the $k$-card-minimum procedure. 
Formally, it is defined below. 

\begin{Def}
\label{def:kCardMinProcedure}
For $k,n \in \N$ the \emph{$k$-card-minimum ($k$CM) procedure} is the following random 
algorithm for generating a permutation $\sigma$ of the integers $1,...,n$.
\begin{align*}
& \bullet ~ D_1 = \{1,...,n\} \\
& \bullet ~ \mbox{For } t = 1,...,n: \\
& ~~~~~~~ C_{t,1},...,C_{t,k} \mbox{ are i.i.d uniform samples from } D_t \\
& ~~~~~~~ C_t = \min \{C_{t,1},...,C_{t,k} \} \\
& ~~~~~~~ D_{t+1} = D_t/\{C_t\} \\
& \bullet ~ \sigma = (C_1,...,C_n)
\end{align*}
\end{Def}

Here, $D_t$ represents the set of cards remaining in the deck at time $t$, just before the $t$-th card is selected. $C_{t,1}, ... , C_{t,k}$ 
are the $k$ random card choices from the remaining set $D_t$, and the minimum of these, $C_t$, is selected for removal.  
The final permutation $\sigma = (C_1,...,C_n)$ is simply the sequence of cards in the order they are removed. 

With $k=1$, of course, the $k$CM procedure reduces to the original procedure in which a single random card is drawn at each step, 
and the final permutation $\sigma$ is uniform. However, for any $k \geq 2$ one expects the selection rule to create a more ordered 
permutation. We allow the case $k=1$ in the definition only because it facilitates easy comparison to the uniform case from our theorems, 
and does not add any increased difficulty in the proofs. 

\subsection{Measures of Order} 
\label{subsec:MeasuresOfOrder}

The extent to which a permutation is ``ordered'' is not, a priori, a well-defined mathematical concept, but 
we will consider two natural measures of order for our analysis: The number of inversions $I$ and 
the length of the longest increasing subsequence $L$.

\begin{Def}
For an $n$-permutation $\sigma = (\sigma(1),...,\sigma(n)):$
\begin{align*}
I(\sigma) & = |\{i<j : \sigma(i) > \sigma(j)\}| \mbox{ and }\\
L(\sigma) & = \max\{ \ell : \exists~ 1 \leq i_1 < ... < i_{\ell} \leq n \mbox{ with } \sigma(i_1) < ... < \sigma(i_{\ell}) \}.
\end{align*}
\end{Def}

Intuitively, of course, a more ordered permutation should have fewer inversions and a longer longest 
increasing subsequence, and, in fact, this intuition can be justified concretely in the following sense. 
\begin{align*}
I(\sigma) = d_{AT}(\sigma,\mbox{id}) ~\mbox{ and } L(\sigma) = n - d_R(\sigma,\mbox{id})
\end{align*}
where $d_{AT}$ and $d_{R}$ are the standard permutation metrics defined by
\begin{align*}
& d_{AT}(\sigma, \sigma') = \mbox{min $\#$ adjacent transpositions required to transform $\sigma$ into $\sigma'$}, \\
& d_{R}(\sigma, \sigma') = \mbox{min $\#$ reinsertions required to transform $\sigma$ into $\sigma'$}.
\end{align*} 
Here, as usual, two permutations $\sigma$ and $\sigma'$ are said to differ by a single adjacent 
transposition if they are of the form 
\begin{align*}
\sigma = (i_1,...,i_n) ~,~ \sigma' = (i_1,...,i_{m-1}, \iB_{\textbf{m+1}}, \iB_{\textbf{m}}, i_{m+2},...,i_n)
\end{align*}
and to differ by a single reinsertion if they are of the form
\begin{align*}
\sigma = (i_1,...,i_n) ~,~ \sigma' = (i_1,...,i_{m-1},  \iB_{\textbf{m+j}}, i_m,...,i_{m+j-1}, i_{m+j+1},...,i_n)
\end{align*} 
or the form
\begin{align*}
\sigma = (i_1,...,i_n) ~,~ \sigma' = (i_1,...,i_{m-1}, i_{m+1},...,i_{m+j},\iB_{\textbf{m}}, i_{m+j+1},...,i_n). 
\end{align*}

\subsection{Summary of Results and Comparison to Uniform Case} 
\label{subsec:SummaryOfResultsComparisonUniform}

For a uniformly random permutation, $I \sim n^2/4$ and converges to a standard normal distribution when appropriately 
centered and rescaled \cite{Feller1968}, whereas $L \sim 2 \sqrt{n}$~ \cite{Vershik1977, Logan1977, Aldous1995} and converges to a 
Tracy-Widom distribution when appropriately centered and rescaled \cite{Baik1999}. For fixed $k$, we find that $I$ and $L$ still obey the 
same $n^2$ and $\sqrt{n}$ scalings, but decreased and increased, respectively, by constant factors. However, if $k = k_n \rightarrow \infty$ 
then the scaling rates are altered. In particular, if $k_n \rightarrow \infty$ with $k_n = o(n)$, then $I$ scales as $n^2/k_n$ and $L$ scales as 
$\sqrt{k_n n}$. More precise statements, including weak laws and central limit theorems, will be given below in Section \ref{sec:StatementOfResults}. 

\subsection{Motivation and Related Work} 
\label{subsec:MotivationAndRelatedWork} 

If $n$ balls are placed into $n$ bins independently and uniformly at random, then the number of balls in the fullest bin or \emph{maximum load}
is roughly $\log(n)/\log \log(n)$ with high probability. If, however, the balls are placed sequentially, and at each step one is allowed to choose from 
among $k$ independent randomly selected bins, then the maximum load can be reduced dramatically to $\log \log(n) / \log(k)$, by always choosing 
to place the ball in the least full bin of the given choices \cite{Azar1999}. This is one of the first, and most remarkable, examples of the power of 
choice in stochastic models. 

Another important example is the Achlioptas model, which is a modification of the standard Erd\"{o}s-R\'{e}nyi random graph process 
$(G(n,m))_m$, in which one is allowed to select from among $k$ independently chosen random edges to add to the $n$-vertex graph at 
each step, rather than simply adding a given random edge. Using appropriate selection rules with $k = 2$ in this model, one can accelerate or 
delay the onset of the giant component from the Erd\"{o}s-R\'{e}nyi critical point of $m = n/2$ edges by a constant factor: to as early as $0.385 n$ 
or as late as $0.829 n$ \cite{Bohman2006, Spencer2007}. Using other selection rules one can also substantially delay (with fixed $k$) or accelerate
(with growing $k$) the first appearance time $m_{H}$ of a fixed subgraph $H$ \cite{Krivelevich2009, Krivelevich2012}. 

Our random permutation model is, of course, mathematically quite different than either the balls and bins selection model 
or the Achlioptas random graph model, but the questions we are interested in are very similar in spirit. We begin with 
a well studied base model, the uniform permutation, which can be generated by a sequential procedure, removing cards one at 
a time. Then, we add choice to the procedure with the goal of modifying some statistical property of the resulting random object. 
In particular, we wish to make the final permutation more ordered, so we select at each step the lowest numbered card from 
among the $k$ given choices. 

This is a very simple strategy, essentially a greedy algorithm. However, as we will show below (Proposition 
\ref{eq:MinStrategyOptimalForI}) it is, in fact, optimal for minimizing the number of inversions $I$, just as the simple 
greedy strategy of selecting the least full bin from among the $k$ choices is optimal for reducing the maximum load 
in the balls and bins model \cite{Azar1999}. For maximizing $L$ our greedy selection rule is not optimal, but it still 
substantially increases $L$ for large fixed $k$ or growing $k$. 

Another motivation for the study of $k$CM random permutations comes from the Mallows random permutation model 
\cite{Mallows1957}. We expect the $k$CM model to have a similar band structure to the Mallows model \cite{Bhatnagar2013}, 
and our theorems, along with the previous work on Mallows permutations in \cite{Bhatnagar2013, Rabinovitch2012}, 
show that with an appropriate choice of parameters both $I$ and $L$ scale at the same rate in the two models. That is, if one 
chooses parameters to ensure roughly the same number of inversions, then one also gets roughly the same length of 
longest increasing subsequence. 
 
\section{Statement of Results}
\label{sec:StatementOfResults}

In this section we state formally our results for the statistics of inversions and the longest increasing subsequence in $k$CM
random permutations. These are divided into four subsections: inversions results for fixed $k$, inversion results for growing $k$,
scaling results for $L$ (both for fixed and growing $k$), and optimality results for the minimum strategy. Proofs will be given later 
in Sections \ref{sec:AnalysisOfInversions}, \ref{sec:AnalysisOfLIS}, and \ref{sec:AnalysisOfOptimality_kCMProcedure}.

Throughout we use the following notation:
\begin{itemize}
\item $[n] = \{1,...,n\}$.
\item $\stackrel{p.}{\longrightarrow}$ and $\stackrel{d.}{\longrightarrow}$ denote, respectively, convergence in probability 
and convergence in distribution. 
\item $\sigma = (C_1,...,C_n)$ is a random permutation generated according to the $k$CM procedure of Definition 
\ref{def:kCardMinProcedure}. 
\item $I$ is the number of inversions in $\sigma$, and $L$ is the length of the longest increasing subsequence in $\sigma$.
\item $\P_{n,k}$ is the probability measure when the $k$CM procedure is run on a deck of $n$ cards with given $k$.  
\item $\E_{n,k}(X)$ and $\Var_{n,k}(X)$ denote, respectively, the expectation and variance of a random variable $X$ 
under the measure $\P_{n,k}$. 
\end{itemize} 

\subsection{Inversion Results for Fixed $k$}
\label{subsec:InversionResultsFixedk}

For $k \in \N$, let
\begin{align*}
a_k = \frac{1}{2(k+1)} ~~\mbox{ and }~~ b_k = \frac{k}{3(k+1)^2(k+2)} ~.
\end{align*} 
Then, we have the following asymptotics for $\E_{n,k}(I)$ and $\Var_{n,k}(I)$,
as $n$ goes to infinity with fixed $k$. 

\begin{Prop}
\label{prop:ExpAndVarI_Fixedk}
For any fixed $k \in \N$,
\begin{align}
\label{eq:ExpAndVarI_Fixedk}
\E_{n,k}(I) = a_k n^2 + ~ O(n) ~~\mbox{ and }~~ \Var_{n,k}(I) = b_k n^3 + O(n^2).
\end{align}
\end{Prop}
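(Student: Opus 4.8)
The plan is to write the number of inversions as a sum of \emph{independent} one‑step contributions and then reduce both asymptotics to elementary power‑sum estimates.

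\textbf{Step 1: the inversion decomposition.} For each $t$ set $R_t = |\{c \in D_t : c < C_t\}|$, the number of cards still in the deck at time $t$ that are smaller than the card just removed; equivalently, $R_t$ is the rank of $C_t$ within $D_t$ minus one. I claim $I = \sum_{t=1}^{n} R_t$. Indeed, if $i < j$ and $C_i > C_j$, then $C_j$ has not yet been removed at time $i$, so $C_j \in D_i$ and $C_j$ is counted by $R_i$; conversely, every $c \in D_t$ with $c < C_t$ is removed at some later time $t' > t$, and then $(t,t')$ is an inversion. These two assignments are mutually inverse, so the identity holds, and hence $\E_{n,k}(I) = \sum_t \E_{n,k}(R_t)$.

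\textbf{Step 2: conditional law of $R_t$ and independence.} Since $C_{t,1},\dots,C_{t,k}$ are i.i.d.\ uniform on $D_t$ and $C_t$ is their minimum, the rank of $C_t$ in $D_t$ exceeds $r$ precisely when all $k$ samples have rank exceeding $r$; writing $m_t := |D_t| = n-t+1$, which is \emph{deterministic}, this gives
\[
\P_{n,k}(R_t \geq s \mid D_t) = \Big(\tfrac{m_t - s}{m_t}\Big)^{k}, \qquad 0 \leq s \leq m_t - 1 .
\]
In particular the conditional law of $R_t$ given $D_t$ depends only on $m_t$. Moreover the maps $(C_1,\dots,C_{t-1}) \leftrightarrow (R_1,\dots,R_{t-1})$ are mutually inverse (from the first $t-1$ ranks one reconstructs the removed cards one at a time, hence also $D_t$), so $\sigma(R_1,\dots,R_{t-1}) = \sigma(C_1,\dots,C_{t-1})$ and the displayed conditional law is also the law of $R_t$ given this $\sigma$‑field. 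Since it does not depend on the conditioning, $R_1,\dots,R_n$ are mutually independent, and therefore $\Var_{n,k}(I) = \sum_t \Var_{n,k}(R_t)$.

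\textbf{Step 3: power sums and conclusion.} From the tail formula, $\E_{n,k}(R_t) = \sum_{s=1}^{m_t-1}(1-s/m_t)^k = m_t^{-k}\sum_{j=1}^{m_t-1} j^{k}$, and using $\E X^2 = \sum_{s\geq 1}(2s-1)\P(X\geq s)$ together with the substitution $j = m_t - s$ one finds that $\E_{n,k}(R_t^2)$ is the fixed linear combination $m_t^{-k}\big[(2m_t-1)\sum_{j=1}^{m_t-1}j^k - 2\sum_{j=1}^{m_t-1}j^{k+1}\big]$. Inserting the Faulhaber/Euler–Maclaurin expansion $\sum_{j=1}^{m-1} j^{p} = \frac{m^{p+1}}{p+1} - \frac{m^{p}}{2} + O(m^{p-1})$ gives $\E_{n,k}(R_t) = \frac{m_t}{k+1} + O(1)$ and, after the $m_t^2$ and $m_t$ terms in $\E R_t^2$ and $(\E R_t)^2$ are tracked through,
\[
\Var_{n,k}(R_t) = \Big(\tfrac{2}{(k+1)(k+2)} - \tfrac{1}{(k+1)^2}\Big)\, m_t^2 + O(m_t) = \tfrac{k}{(k+1)^2(k+2)}\, m_t^2 + O(m_t).
\]
Summing over $t=1,\dots,n$, so that $m_t$ ranges over $1,\dots,n$, and using $\sum_{m\leq n} m = \frac{n^2}{2}+O(n)$ and $\sum_{m\leq n} m^2 = \frac{n^3}{3}+O(n^2)$, yields $\E_{n,k}(I) = \frac{n^2}{2(k+1)} + O(n) = a_k n^2 + O(n)$ and $\Var_{n,k}(I) = \frac{k}{3(k+1)^2(k+2)} n^3 + O(n^2) = b_k n^3 + O(n^2)$.

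\textbf{Main obstacle.} Conceptually, the only delicate point is the independence in Step 2: one must verify that the conditioning $\sigma$‑field is exactly $\sigma(R_1,\dots,R_{t-1})$ and that $m_t$ is deterministic, so that the conditional law of $R_t$ is constant. The rest is bookkeeping, but not entirely trivial, since the leading $m_t^2$ contributions to $\E R_t^2$ and $(\E R_t)^2$ cancel; one therefore needs the first \emph{two} terms of the power‑sum expansions to recover the coefficient $k/\big((k+1)^2(k+2)\big)$, and must likewise be careful that the $n$ accumulated $O(1)$ errors in the mean only produce the claimed $O(n)$.
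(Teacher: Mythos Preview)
Your proof is correct and follows essentially the same route as the paper: your $R_t$ is exactly the paper's $I_t=\widetilde{C}_t-1$, and the paper likewise establishes independence of the $I_t$'s, their explicit tail law, the estimates $\E I_t=\frac{n-t}{k+1}+O(1)$ and $\Var I_t=\frac{k(n-t)^2}{(k+1)^2(k+2)}+O(n)$, and then sums. One small remark on your closing commentary: the leading $m_t^2$ terms of $\E R_t^2$ and $(\E R_t)^2$ do \emph{not} cancel (their difference is precisely the variance coefficient), so in fact the leading term of each power sum already suffices to extract $k/((k+1)^2(k+2))$; the second-order terms are only needed to control the $O(m_t)$ error.
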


Moreover, a weak law of large numbers and central limit theorem both hold. 

\begin{The}[Weak Law of Large Numbers] 
\label{thm:WeakLawFixedk}
For any fixed $k \in \N$,
\begin{align}
\label{eq:WeakLawFixedk}
\frac{I}{n^2} \stackrel{p.}{\longrightarrow} a_k, \mbox{ as } n \rightarrow \infty. 
\end{align}
\end{The}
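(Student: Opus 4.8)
The plan is to derive the weak law directly from the moment asymptotics in Proposition~\ref{prop:ExpAndVarI_Fixedk} via a second-moment (Chebyshev) argument. Since $\E_{n,k}(I) = a_k n^2 + O(n)$, we have $\E_{n,k}(I)/n^2 \to a_k$, so it suffices to show $I/n^2$ concentrates around its mean. By Chebyshev's inequality, for any $\epsilon>0$,
\begin{align*}
\P_{n,k}\!\left( \left| \frac{I}{n^2} - \frac{\E_{n,k}(I)}{n^2} \right| \geq \epsilon \right) \leq \frac{\Var_{n,k}(I)}{\epsilon^2 n^4} = \frac{b_k n^3 + O(n^2)}{\epsilon^2 n^4} = O\!\left( \frac{1}{\epsilon^2 n} \right) \to 0
\end{align*}
as $n \to \infty$. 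Combining this with $\E_{n,k}(I)/n^2 \to a_k$ and the triangle inequality gives $I/n^2 \stackrel{p.}{\longrightarrow} a_k$.

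Concretely, I would fix $\epsilon > 0$, choose $N$ large enough that $|\E_{n,k}(I)/n^2 - a_k| < \epsilon/2$ for all $n \geq N$ (possible since the $O(n)$ error term divided by $n^2$ vanishes), and then bound
\begin{align*}
\P_{n,k}\!\left( \left| \frac{I}{n^2} - a_k \right| \geq \epsilon \right) \leq \P_{n,k}\!\left( \left| \frac{I}{n^2} - \frac{\E_{n,k}(I)}{n^2} \right| \geq \frac{\epsilon}{2} \right) \leq \frac{4\,\Var_{n,k}(I)}{\epsilon^2 n^4},
\end{align*}
which tends to $0$ as $n\to\infty$ by the variance asymptotic. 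Since $\epsilon$ was arbitrary, this is exactly the claimed convergence in probability.

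There is essentially no obstacle here: the entire content of the weak law has been pushed into Proposition~\ref{prop:ExpAndVarI_Fixedk}, and the theorem is a routine consequence. The only point requiring a (trivial) remark is that the $O(\cdot)$ constants may depend on $k$, but since $k$ is fixed this is harmless — for each fixed $k$ the bound $O(1/(\epsilon^2 n))$ still goes to zero. The real work, namely establishing the $\Theta(n^2)$ growth of the mean and, crucially, the $\Theta(n^3)$ growth of the variance (so that $\Var/\E^2 \to 0$), is deferred to the proof of the proposition in Section~\ref{sec:AnalysisOfInversions}, presumably by writing $I = \sum_{i<j} \indicator\{\sigma(i) > \sigma(j)\}$ and carefully computing pairwise and higher-order covariances of these indicator variables under the $k$CM dynamics.
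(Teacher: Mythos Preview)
Your proof is correct and matches the paper's almost verbatim: Proposition~\ref{prop:ExpAndVarI_Fixedk} gives $\E_{n,k}(I/n^2)\to a_k$ and $\Var_{n,k}(I/n^2)\to 0$, and Chebyshev finishes it. One tangential remark: your speculation that the proposition is proved by computing covariances of the indicators $\indicator\{\sigma(i)>\sigma(j)\}$ is not how the paper proceeds---it instead writes $I=\sum_{t=1}^{n-1} I_t$ with $I_t=\Ct_t-1$ and exploits the \emph{independence} of the relative positions $\Ct_t$, so no covariance computation is needed.
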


\begin{The}[Central Limit Theorem] 
\label{thm:CentralLimitTheoremFixedk}
For any fixed $k \in \N$, 
\begin{align}
\label{eq:CentralLimitTheoremFixedk_1}
\frac{I - \E_{n,k}(I)}{\sqrt{\Var_{n,k}(I)}} \stackrel{d.}{\longrightarrow} Z ~,~ \mbox{ as } n \rightarrow \infty
\end{align}
where $Z$ is a standard normal random variable. Equivalently, 
\begin{align}
\label{eq:CentralLimitTheoremFixedk_2}
\frac{I - a_k \cdot n^2}{\sqrt{ b_k } \cdot n^{3/2}} \stackrel{d.}{\longrightarrow} Z ~,~ \mbox {as } n \rightarrow \infty.
\end{align}
\end{The}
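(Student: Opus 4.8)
The plan is to write $I$ as a sum of \emph{independent} (non-identically distributed) random variables, apply a Lyapunov central limit theorem to that sum, and then use Slutsky's theorem to pass between the two stated normalizations.

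First I would record the inversion-table (Lehmer code) decomposition. For $1 \le t \le n$ let $R_t = \abs{\{c \in D_{t+1} : c < C_t\}}$ be the number of cards still in the deck after step $t$ that are smaller than the card $C_t$ just removed; equivalently $R_t$ is the number of $j > t$ with $C_j < C_t$, and the rank of $C_t$ within $D_t$ equals $R_t + 1$. Since the entries of $\sigma$ after position $t$ are exactly the elements of $D_{t+1}$, summing over $t$ gives $I = \sum_{t=1}^n R_t$. The crucial structural fact is that $R_1,\dots,R_n$ are independent: the values $R_1,\dots,R_{t-1}$ determine $C_1,\dots,C_{t-1}$, hence the set $D_t$; and conditionally on $D_t$, the rank $R_t+1$ of $C_t=\min\{C_{t,1},\dots,C_{t,k}\}$ is distributed as the minimum of $k$ i.i.d.\ uniform samples from $\{1,\dots,n-t+1\}$, a law depending on $D_t$ only through its cardinality $n-t+1$. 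Hence the conditional law of $R_t$ given $R_1,\dots,R_{t-1}$ coincides with its unconditional law, which is precisely independence. (This is the same observation underlying the moment computations in \Propref{prop:ExpAndVarI_Fixedk}.)

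Now set $s_n^2 = \Var_{n,k}(I) = \sum_{t=1}^n \Var_{n,k}(R_t)$, which by \Propref{prop:ExpAndVarI_Fixedk} equals $b_k n^3 + O(n^2)$; since $b_k>0$ for every $k\in\N$, we have $s_n = \Theta(n^{3/2})$. I would verify Lyapunov's condition with third moments. Because $0 \le R_t \le n-t$, we get $\abs{R_t - \E_{n,k}(R_t)} \le n$ and therefore $\E_{n,k}\abs{R_t - \E_{n,k}(R_t)}^3 \le n\,\Var_{n,k}(R_t)$; summing, $\sum_{t=1}^n \E_{n,k}\abs{R_t - \E_{n,k}(R_t)}^3 \le n\,s_n^2 = O(n^4)$, so $s_n^{-3}\sum_{t=1}^n \E_{n,k}\abs{R_t - \E_{n,k}(R_t)}^3 = O(n^{-1/2}) \to 0$. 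The Lyapunov CLT then yields $(I - \E_{n,k}(I))/s_n \stackrel{d.}{\longrightarrow} Z$, which is \eqref{eq:CentralLimitTheoremFixedk_1}. For \eqref{eq:CentralLimitTheoremFixedk_2}, apply Slutsky's theorem: by \Propref{prop:ExpAndVarI_Fixedk}, $\big(\E_{n,k}(I) - a_k n^2\big)/(\sqrt{b_k}\,n^{3/2}) = O(n^{-1/2}) \to 0$ and $s_n/(\sqrt{b_k}\,n^{3/2}) \to 1$, so the two centered-and-rescaled versions of $I$ have the same weak limit.

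I do not expect a serious obstacle here: the whole argument rests on recognizing that the rank of the card removed at step $t$ has a law depending only on the current deck size, which gives independence of the $R_t$ and reduces the problem to a textbook CLT. The only items requiring a little care are confirming $b_k>0$ for all $k\in\N$ (so the normalization is genuinely of order $n^{3/2}$) and tracking the $O(\cdot)$ error terms when converting \eqref{eq:CentralLimitTheoremFixedk_1} into \eqref{eq:CentralLimitTheoremFixedk_2}.
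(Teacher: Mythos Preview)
Your proof is correct and follows essentially the same approach as the paper: decompose $I$ as a sum of independent rank variables (your $R_t$ is the paper's $I_t = \Ct_t - 1$), invoke the variance asymptotics of \Propref{prop:ExpAndVarI_Fixedk}, and apply a triangular-array CLT. The only cosmetic difference is that you verify Lyapunov's third-moment condition while the paper checks the Lindeberg--Feller condition directly, observing that $|\Xnt| = O(n^{-1/2})$ uniformly so the truncated second moments eventually vanish identically; both routes amount to exploiting the crude bound $|R_t - \E_{n,k}(R_t)| \le n$ against $s_n = \Theta(n^{3/2})$.
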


\subsection{Inversion Results for Growing $k$}
\label{subsec:InversionResultsGrowingk}

Throughout this section we assume that $(k_n)_{n = 1}^{\infty}$ is a nondecreasing sequence of positive integers such that
$k_n \rightarrow \infty$ with $k_n = o(n)$. Our first proposition gives asymptotic estimates for the expectation and variance 
of the number of inversions $I$, analogous to Proposition \ref{prop:ExpAndVarI_Fixedk}. 

\begin{Prop}
\label{prop:ExpAndVarI_Growingk}
\begin{align}
\label{eq:ExpAndVarI_Growingk}
\E_{n,k_n}(I) = \frac{1}{2} \cdot \frac{n^2}{k_n} ~+~ o\left( \frac{n^2}{k_n}\right) ~~\mbox{ and }~~
\Var_{n,k_n}(I) = \frac{1}{3} \cdot \frac{n^3}{k_n^2} ~+~ o\left( \frac{n^3}{k_n^2}\right).
\end{align}
\end{Prop}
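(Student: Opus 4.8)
The plan is to write the inversion count $I$ as a sum of \emph{independent} ``rank'' variables, one for each deck size, and then to control their first two moments with enough uniformity in $k = k_n$. This refines the fixed-$k$ analysis behind \Propref{prop:ExpAndVarI_Fixedk}, and I would reuse its core computation.

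\textbf{Decomposition.} For $1 \le m \le n$, let $R^{(m)}$ be the rank, within the current deck $D_t$, of the card $C_t$ removed at the unique step $t$ with $|D_t| = m$; thus $C_t$ is the $R^{(m)}$-th smallest element of $D_t$. Every card of $D_t$ below $C_t$ lies in $D_{t+1}$ and so is removed after time $t$, whence the number of pairs $s > t$ with $C_s < C_t$ is exactly $R^{(m)} - 1$, and therefore
\[
I \;=\; \sum_{m=1}^{n} \bigl( R^{(m)} - 1 \bigr).
\]
Conditionally on $D_t$ (of any size $m$), the ranks within $D_t$ of the $k$ fresh i.i.d.\ uniform samples are i.i.d.\ uniform on $\{1,\dots,m\}$, so $R^{(m)}$ is the minimum of $k$ such variables --- independent of the past, with law depending only on $m$ --- and $\P_{n,k}(R^{(m)} \ge r) = \bigl(\tfrac{m-r+1}{m}\bigr)^{k}$ for $1 \le r \le m$. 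In particular $R^{(1)},\dots,R^{(n)}$ are mutually independent, so it suffices to estimate $\sum_m \E_{n,k_n}(R^{(m)}-1)$ and $\sum_m \Var_{n,k_n}(R^{(m)})$.

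\textbf{Moments of $R^{(m)}$ and their estimation.} Write $A_p = A_p(m) := \sum_{j=1}^{m}(j/m)^{p}$. Summing the tail and using $\E(X^2) = \sum_{r \ge 1}(2r-1)\,\P(X \ge r)$ for $\N$-valued $X$, the substitution $j = m-r+1$ gives $\E_{n,k}(R^{(m)}) = A_k$, $\E_{n,k}\bigl((R^{(m)})^2\bigr) = 2m(A_k - A_{k+1}) + A_k$, and hence
\[
\Var_{n,k}\bigl(R^{(m)}\bigr) \;=\; 2m\,(A_k - A_{k+1}) \;+\; A_k - A_k^{\,2}.
\]
Comparing the increasing sum $A_p$ to $\int_0^1 x^p\,dx = 1/(p+1)$ yields $\tfrac{m}{p+1} \le A_p \le \tfrac{m}{p+1}\,e^{(p+1)/m}$, so (using $e^t - 1 \le (e-1)t$ on $[0,1]$) $A_p = \tfrac{m}{p+1} + O(1)$ whenever $m \ge p+1$. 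For the variance one also needs the \emph{difference} $A_k - A_{k+1} = m \cdot \tfrac1m \sum_{j=1}^m h(j/m)$ with $h(x) = x^k(1-x)$: since $h$ is unimodal with $h(0) = h(1) = 0$, its total variation on $[0,1]$ equals $2\,h\bigl(\tfrac{k}{k+1}\bigr) = O(1/k)$, so the right-endpoint Riemann sum approximates $\int_0^1 h = \tfrac{1}{(k+1)(k+2)}$ up to $O(1/(km))$, giving $A_k - A_{k+1} = \tfrac{m}{(k+1)(k+2)} + O(1/k)$.

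\textbf{Summation.} Split each sum at $m = k_n$. For $m \le k_n$ we have $k_n/m \ge 1$, so $\E_{n,k_n}(R^{(m)}-1) \le \sum_{s\ge1} e^{-sk_n/m} = O(1)$ and $\E_{n,k_n}\bigl((R^{(m)}-1)^2\bigr) \le \sum_{s\ge1} 2s\,e^{-sk_n/m} = O(1)$, so these $m$ contribute $O(k_n)$ to each sum. For $k_n < m \le n$ the estimates above give $\E_{n,k_n}(R^{(m)}-1) = \tfrac{m}{k_n+1} + O(1)$ and
\[
\Var_{n,k_n}\bigl(R^{(m)}\bigr) \;=\; \frac{2m^2}{(k_n+1)(k_n+2)} - \frac{m^2}{(k_n+1)^2} + O\!\left(\tfrac{m}{k_n}\right) \;=\; \frac{k_n\, m^2}{(k_n+1)^2(k_n+2)} + O\!\left(\tfrac{m}{k_n}\right).
\]
Summing over $m$, using $\sum_{m \le n} m \sim n^2/2$, $\sum_{m \le n} m^2 \sim n^3/3$, $\tfrac{1}{k_n+1} = \tfrac{1}{k_n}(1+o(1))$, $\tfrac{k_n}{(k_n+1)^2(k_n+2)} = \tfrac{1}{k_n^2}(1+o(1))$, and noting that the accumulated errors $O(k_n)$, $O(n)$, $O(n^2/k_n)$ are all $o(n^2/k_n)$, respectively $o(n^3/k_n^2)$, precisely because $k_n = o(n)$, yields \eqref{eq:ExpAndVarI_Growingk}.

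\textbf{Main obstacle.} The expectation is routine once the decomposition is in hand. The delicate point is the variance for growing $k$: if one expanded $A_k$ and $A_{k+1}$ separately (say by Euler--Maclaurin) and subtracted, the $1/m$-order terms would carry a factor of $k$, so $2m(A_k - A_{k+1})$ would acquire an error of order $k_n$ per summand --- $O(n k_n)$ in total --- which is not $o(n^3/k_n^2)$ unless $k_n = o(n^{2/3})$. Estimating the difference $A_k - A_{k+1}$ as a single Riemann sum, and exploiting that its integrand $x^k(1-x)$ has total variation only $O(1/k)$, cuts the per-summand error down to $O(m/k_n)$, which is what makes the statement hold for every $k_n = o(n)$.
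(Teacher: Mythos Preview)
Your proof is correct and follows essentially the same route as the paper: the decomposition $I = \sum_m (R^{(m)}-1)$ is exactly the paper's $I = \sum_t I_t$ via $I_t = \Ct_t - 1$ (with $m = n-t+1$), and your Riemann-sum estimate for $A_k - A_{k+1}$ exploiting the unimodality of $x^k(1-x)$ is precisely the content of the paper's Lemma~\ref{lem:BasicSumEstimates}, inequality~\eqref{eq:sum_mmttk_bounds}, which feeds into Claims~\ref{cla:EItEstimate}--\ref{cla:VarItEstimate}. The only cosmetic difference is that the paper proves those claims uniformly over all $t$ and $k$, so it does not need your split at $m = k_n$; also, your ``Main obstacle'' commentary slightly misstates the naive error (separate $O(1)$ bounds on $A_k, A_{k+1}$ would give a total variance error of order $n^2$, forcing $k_n = o(n^{1/2})$ rather than $o(n^{2/3})$), but this does not affect the argument itself.
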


Using this proposition, along with some intermediate estimates used in its proof, we also obtain a 
weak law of large numbers and central limit theorem for the number of inversions $I$, analogous 
to Theorems \ref{thm:WeakLawFixedk} and \ref{thm:CentralLimitTheoremFixedk}. 

\begin{The}[Weak Law of Large Numbers] 
\label{thm:WeakLawGrowingk}
\begin{align}
\label{eq:WeakLawGrowingk}
I \cdot \frac{k_n}{n^2} \stackrel{p.}{\longrightarrow} \frac{1}{2}, \mbox{ as } n \rightarrow \infty. 
\end{align}
\end{The}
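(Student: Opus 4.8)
The plan is to obtain the weak law as a direct consequence of the first- and second-moment asymptotics in Proposition~\ref{prop:ExpAndVarI_Growingk}, via Chebyshev's inequality. Write, for $n$ large enough that $\E_{n,k_n}(I) > 0$,
\[
I \cdot \frac{k_n}{n^2} \;=\; \frac{I}{\E_{n,k_n}(I)} \cdot \frac{k_n\,\E_{n,k_n}(I)}{n^2},
\]
where by Proposition~\ref{prop:ExpAndVarI_Growingk} the second (deterministic) factor tends to $\tfrac12$. Thus it suffices to prove that $I/\E_{n,k_n}(I) \stackrel{p.}{\longrightarrow} 1$.

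For this I would apply Chebyshev's inequality: for any $\epsilon > 0$,
\[
\P_{n,k_n}\!\left(\left|\frac{I}{\E_{n,k_n}(I)} - 1\right| > \epsilon\right)
\;\le\; \frac{1}{\epsilon^2}\cdot\frac{\Var_{n,k_n}(I)}{\E_{n,k_n}(I)^2}.
\]
Substituting the estimates of Proposition~\ref{prop:ExpAndVarI_Growingk},
\[
\frac{\Var_{n,k_n}(I)}{\E_{n,k_n}(I)^2}
= \frac{\tfrac13\,n^3/k_n^2 + o(n^3/k_n^2)}{\bigl(\tfrac12\,n^2/k_n + o(n^2/k_n)\bigr)^2}
= \frac{4}{3n}\,\bigl(1 + o(1)\bigr) \;\longrightarrow\; 0,
\]
so the right-hand side above vanishes as $n\to\infty$, giving $I/\E_{n,k_n}(I)\stackrel{p.}{\longrightarrow} 1$.

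Finally, combining the two pieces — a sequence of random variables converging in probability to $1$ times a deterministic sequence converging to $\tfrac12$ converges in probability to $\tfrac12$ (Slutsky, or an elementary $\epsilon$-argument) — yields $I\cdot k_n/n^2 \stackrel{p.}{\longrightarrow} \tfrac12$, as claimed.

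There is no real obstacle at this level: the entire substance of the argument lives in Proposition~\ref{prop:ExpAndVarI_Growingk} — in particular in controlling the pairwise covariances of the inversion indicators so as to pin down the leading $\tfrac13\,n^3/k_n^2$ term of the variance — and once that input is granted the weak law is immediate. (Indeed, the computation above does not even require the hypothesis $k_n = o(n)$ beyond what Proposition~\ref{prop:ExpAndVarI_Growingk} already assumes, since the squared mean dominates the variance by a factor of order $n$ regardless of how $k_n$ grows.)
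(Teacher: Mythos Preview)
Your proof is correct and follows essentially the same route as the paper: both deduce the weak law from Proposition~\ref{prop:ExpAndVarI_Growingk} via Chebyshev's inequality, the paper just applying Chebyshev directly to $I\cdot k_n/n^2$ (noting $\E_{n,k_n}(I\cdot k_n/n^2)\to 1/2$ and $\Var_{n,k_n}(I\cdot k_n/n^2)\to 0$) rather than first factoring through $I/\E_{n,k_n}(I)$. One small aside: your parenthetical about ``controlling the pairwise covariances of the inversion indicators'' mischaracterizes the proof of Proposition~\ref{prop:ExpAndVarI_Growingk}, which instead exploits the \emph{independence} of the summands $I_t$ in the relative-position decomposition $I=\sum_t I_t$ --- but this does not affect your argument here.
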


\begin{The}[Central Limit Theorem] 
\label{thm:CentralLimitTheoremGrowingk}
\begin{align}
\label{eq:CentralLimitTheoremGrowingk_1}
\frac{I - \E_{n,k_n}(I)}{\sqrt{\Var_{n,k_n}(I)}} \stackrel{d.}{\longrightarrow} Z ~,~ \mbox{ as } n \rightarrow \infty
\end{align}
where $Z$ is a standard normal random variable. 
\end{The}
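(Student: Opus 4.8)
The plan is to write $I$ as a sum of \emph{independent} (non-identically distributed) bounded random variables and then apply the Lyapunov central limit theorem, using Proposition~\ref{prop:ExpAndVarI_Growingk} for the normalization. Write $\mathcal{F}_{t-1}$ for the $\sigma$-algebra generated by $C_1,\dots,C_{t-1}$, set $m_t := |D_t| = n-t+1$, and for each $t$ let
\[
X_t := \bigl|\{\, c \in D_t : c < C_t \,\}\bigr|
\]
be the number of still-unremoved cards smaller than the card removed at step $t$. Every inversion $(i,j)$ with $i<j$ and $\sigma(i)>\sigma(j)$ is counted exactly once this way, namely at time $i$, so $I = \sum_{t=1}^n X_t$. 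Moreover $X_t = R_t - 1$, where $R_t$ is the rank within $D_t$ of the minimum of the $k_n$ i.i.d.\ uniform picks from $D_t$ at step $t$; and conditionally on $\mathcal{F}_{t-1}$ the vector of the $k_n$ ranks of these picks within $D_t$ is uniform on $[m_t]^{k_n}$, a law which does not depend on $\mathcal{F}_{t-1}$. Hence $X_t$ is independent of $\mathcal{F}_{t-1}$, so $X_1,\dots,X_n$ are mutually independent, with $\P_{n,k_n}(X_t \ge x) = \bigl((m_t-x)/m_t\bigr)^{k_n}$ for $0 \le x \le m_t-1$. (This is the decomposition already underlying the proof of Proposition~\ref{prop:ExpAndVarI_Growingk}.)

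\textbf{Step 2: reduction to a Lyapunov bound.} Since $I - \E_{n,k_n}(I) = \sum_{t=1}^n \bigl(X_t - \E_{n,k_n}X_t\bigr)$ is a sum of independent, mean-zero, bounded ($0 \le X_t \le m_t \le n$) terms, the convergence \eqref{eq:CentralLimitTheoremGrowingk_1} follows from the Lyapunov condition with exponent $3$,
\[
\frac{\sum_{t=1}^n \E_{n,k_n}\bigl|X_t - \E_{n,k_n}X_t\bigr|^3}{\bigl(\Var_{n,k_n}(I)\bigr)^{3/2}} \longrightarrow 0 \quad\text{as } n \to \infty .
\]
By Proposition~\ref{prop:ExpAndVarI_Growingk}, $\Var_{n,k_n}(I) = \tfrac13\, n^3/k_n^2\,(1+o(1))$, so the denominator is $\asymp n^{9/2}/k_n^3$.

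\textbf{Step 3: the third-moment estimate.} Since $0 \le X_t \le R_t$ and $(\E X_t)^3 \le \E X_t^3$ by Jensen, $\E|X_t - \E X_t|^3 \le 16\,\E X_t^3 \le 16\,\E R_t^3$. From the tail bound $\P(R_t \ge r) = \bigl(1 - \tfrac{r-1}{m_t}\bigr)^{k_n} \le e^{-k_n(r-1)/m_t}$, a geometric-series (or integral) comparison gives $\E R_t^3 \le C\bigl((m_t/k_n)^3 + 1\bigr)$ for an absolute constant $C$, the ``$+1$'' absorbing the range $m_t \lesssim k_n$ where $R_t$ is essentially $1$. Summing over $t$ and using $k_n = o(n)$ (so $n \le n^4/k_n^3$ eventually),
\[
\sum_{t=1}^n \E_{n,k_n}\bigl|X_t - \E_{n,k_n}X_t\bigr|^3 \;\le\; 16 C \sum_{t=1}^n \Bigl(\frac{m_t^3}{k_n^3} + 1\Bigr) \;\asymp\; \frac{n^4}{k_n^3} .
\]
Thus the Lyapunov ratio is $O\bigl((n^4/k_n^3)\,/\,(n^{9/2}/k_n^3)\bigr) = O(n^{-1/2}) \to 0$, proving the theorem. (The identical computation with $k_n$ replaced by a fixed $k$ also reproves Theorem~\ref{thm:CentralLimitTheoremFixedk}.)

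\textbf{Main obstacle.} The real content is Step~1 — recognizing that the per-step inversion counts $X_t$ are genuinely independent, because the conditional law of the picks' ranks given the history is uniform on $[m_t]^{k_n}$ irrespective of the history. After that the CLT is a routine Lyapunov application, the only delicate point being a third-moment bound on $R_t$ that is uniform over $t$, in particular in the near-empty-deck regime $m_t \lesssim k_n$; but those terms contribute only $O(n)$, negligible next to the $n^4/k_n^3$ coming from the $t = O(n)$ range. All the moment estimates required are already available from the proof of Proposition~\ref{prop:ExpAndVarI_Growingk}.
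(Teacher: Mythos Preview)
Your proof is correct. You use the same independence decomposition $I=\sum_t X_t$ as the paper (the paper's $I_t$ is your $X_t$), but you verify the Lyapunov condition with exponent $3$ rather than the Lindeberg condition the paper checks via the Lindeberg--Feller theorem. The paper instead bounds the tail $\P_{n,k_n}(|X_t^{(n)}|>\epsilon)$ directly from the explicit distribution, getting an $e^{-(\epsilon/4)n^{1/2}}$ bound, and combines this with the crude a.s.\ bound $|X_t^{(n)}|\le n^{1/2}$ to kill the truncated second moments. Your route is somewhat cleaner: a single third-moment estimate $\E R_t^3\le C\bigl((m_t/k_n)^3+1\bigr)$ handles all $t$ uniformly without splitting into the cases $\Xnt<-\epsilon$ and $\Xnt>\epsilon$, and the final Lyapunov ratio $O(n^{-1/2})$ even quantifies the rate. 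The paper's approach, on the other hand, avoids any moment computation beyond the variance already needed for Proposition~\ref{prop:ExpAndVarI_Growingk}, so neither argument is strictly shorter; they simply trade a tail estimate for a moment estimate.
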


\subsection{Scaling of $L$}
\label{subsec:ScalingOfL} 

The following theorem is our primary result for the longest increasing subsequence in
$k$CM random permutations. It establishes the scaling rate of $L$ as $n \rightarrow \infty$ 
up to a universal constant factor, both for fixed and growing $k$. 

\begin{The}
\label{thm:ScalingOfL}
If $(k_n)_{n=1}^{\infty}$ is any sequence of positive integers satisfying $k_n = o(n)$, then
\begin{align}
\label{eq:ExScalingOfL}
1/2 \leq \liminf_{n \to \infty}~ \frac{\E_{n,k_n}(L)}{\sqrt{k_n n}} \leq \limsup_{n \to \infty}~ \frac{\E_{n,k_n}(L)}{\sqrt{k_n n}} \leq 4e.
\end{align}
Moreover, for any $\epsilon > 0$,
\begin{align}
\label{eq:ScalingOfL}
\lim_{n \to \infty} \P_{n,k_n} \left( 1/2 - \epsilon \leq L/\sqrt{k_n n} \leq 4e + \epsilon \right) = 1.
\end{align}
\end{The}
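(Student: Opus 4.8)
The plan is to establish the upper and lower bounds on $\E_{n,k_n}(L)$ separately, and then upgrade the expectation bounds to the in-probability statement by a concentration argument. Throughout, write $k = k_n$.

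\medskip
\textbf{Lower bound $\E_{n,k_n}(L) \gtrsim \tfrac12 \sqrt{kn}$.} The key observation is a direct comparison between the $k$CM procedure and the uniform case via a coupling on the ``blocks'' of card values. Partition the card values $\{1,\dots,n\}$ into consecutive blocks $B_1 = \{1,\dots,m\}$, $B_2 = \{m+1,\dots,2m\}$, \dots of size $m \approx \sqrt{n/k}$ (so there are roughly $\sqrt{nk}$ blocks). First I would show that, with high probability, the cards are removed block-by-block in order, up to a small number of exceptions: because the minimum of $k$ uniform draws strongly favors low values, at the time the deck still contains most of $B_j$ and all later blocks, a draw of $k$ cards will almost surely include a card of $B_j$ or earlier, so $C_t \le \max B_j$. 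More precisely, the number of cards from block $B_{j+1}$ or later removed while $B_j$ still has $\ge m/2$ cards is stochastically small when $m$ is of the stated order. Given this ``sorted-by-blocks'' structure, the sequence of first-appearance positions of the blocks $B_1, B_2, \dots$ is (essentially) increasing, and within each block the restricted subpermutation is approximately a uniform $m$-permutation, which by the classical $2\sqrt{m}$ result \cite{Vershik1977, Logan1977, Aldous1995} contains an increasing subsequence of length $\sim 2\sqrt{m}$. Concatenating one such increasing run from a constant fraction of the $\sqrt{nk}$ blocks yields an increasing subsequence of length $\gtrsim (\text{const}) \sqrt{nk} \cdot \sqrt{m}$; optimizing the block size and tracking constants gives the claimed $\tfrac12$ (the constant is not tight, only the rate matters). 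I expect this block-comparison to be the main obstacle, since making the ``removed in block order'' statement quantitative — controlling how many low-block cards remain when a high-block card is drawn — requires a careful union bound over all times and blocks.

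\medskip
\textbf{Upper bound $\E_{n,k_n}(L) \lesssim 4e\sqrt{kn}$.} Here I would use a first-moment / patience-sorting bound. Any increasing subsequence of length $\ell$ corresponds to positions $i_1 < \dots < i_\ell$ with $C_{i_1} < \dots < C_{i_\ell}$. The probability that a fixed set of $\ell$ positions forms an increasing subsequence can be bounded using the explicit conditional law of the $k$CM draws: given the current deck $D_t$ of size $n-t+1$, the probability that $C_t$ equals a particular value $v$ is $((\text{rank of } v)^k - (\text{rank of } v - 1)^k)/(n-t+1)^k$, which is maximized (and geometrically concentrated) near the smallest available values. A union bound over all $\binom{n}{\ell}$ position sets and all value sequences, combined with the estimate that each card is ``effectively chosen from the bottom $\sim (n-t+1)/k$ of the deck'', shows that the expected number of increasing subsequences of length $\ell$ is $o(1)$ once $\ell \ge 4e\sqrt{kn}$; the factor $4e$ emerges from a Stirling-type estimate $\binom{N}{\ell} (c/k)^\ell$ balanced at $\ell \asymp \sqrt{kn}$. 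Then $\P(L \ge 4e\sqrt{kn}) \to 0$ and, since $L \le n$ deterministically and the excess event has negligible probability (with room to spare from the superpolynomially small tail), $\E_{n,k_n}(L) \le (4e + o(1))\sqrt{kn}$.

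\medskip
\textbf{From expectation to in-probability.} The upper-tail half of \eqref{eq:ScalingOfL}, $\P(L/\sqrt{kn} \le 4e + \epsilon) \to 1$, is already delivered by the first-moment bound above. For the lower-tail half, $\P(L/\sqrt{kn} \ge 1/2 - \epsilon) \to 1$, I would establish concentration of $L$ around its mean and then invoke the lower bound on $\E_{n,k_n}(L)$. Concentration follows from a bounded-differences / Azuma argument applied to the martingale exposing the draws $(C_{t,1},\dots,C_{t,k})_{t=1}^n$ one step at a time: changing the outcome at a single step $t$ alters $L$ by at most $1$ (it can insert or delete at most one element from any longest increasing subsequence), so $\Var(L) = O(n)$ and $L$ deviates from its mean by $\omega(\sqrt n)$ with probability $o(1)$; since $\E_{n,k_n}(L) \asymp \sqrt{kn} \gg \sqrt n$ (as $k_n \to \infty$; when $k_n$ is bounded the classical results already apply and give a stronger statement), this deviation is negligible on the scale $\sqrt{kn}$. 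Combining the two tails gives \eqref{eq:ScalingOfL}, and \eqref{eq:ExScalingOfL} is then immediate from the two expectation bounds. The one technical point to watch is that the bounded-differences constant must be uniform in $k$, which it is, since the per-step Lipschitz bound of $1$ does not depend on $k$.
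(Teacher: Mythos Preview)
Your plan has genuine gaps in each of the three parts.

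\textbf{Lower bound.} With block size $m\approx\sqrt{n/k}$ the cards are \emph{not} removed block by block: at time $1$ the chance that $\min\{C_{1,1},\dots,C_{1,k}\}$ lands in $B_1$ is $1-(1-m/n)^k\approx 1-e^{-\sqrt{k/n}}\to 0$, so the very first removal almost surely misses $B_1$. Moreover, your arithmetic does not close: $\sqrt{nk}\cdot\sqrt{m}=\sqrt{nk}\cdot(n/k)^{1/4}=n^{3/4}k^{1/4}$, which exceeds your own upper bound by the factor $(n/k)^{1/4}$. To get block ordering one would need $m\asymp n/k$, but then the ``within-block uniform'' claim fails (the within-block draw is still a biased minimum), and the tradeoff does not yield a clean constant. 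The paper avoids value blocks entirely: it builds an increasing subsequence via stopping times $T_1<T_2<\dots$, where $T_m$ is the first time the selected card hits a moving target set $S_m$ consisting of the $\lceil\sqrt{n/k}\rceil$ lowest remaining cards above $C_{T_{m-1}}$. The key estimate is that each relevant step hits its target with conditional probability $\ge (1-\epsilon)\sqrt{k/n}$; a coupling to i.i.d.\ Bernoulli$\bigl((1-\epsilon)\sqrt{k/n}\bigr)$ then gives $M\ge(1/2-\epsilon)\sqrt{kn}$ with an exponential tail bound (Proposition~\ref{prop:LowerBoundL}).

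\textbf{Upper bound.} A straight first-moment bound of the kind you describe yields only $L\lesssim e\sqrt{kn\log n}$, not $4e\sqrt{kn}$: the per-card bound $\P(C_t=j\mid\cdot)\le k/(n-t+1)$ deteriorates as $t\to n$, and summing $\prod_m k/(n-t_m+1)$ over all time-tuples contributes a factor $(H_n)^{\ell}/\ell!$ with $H_n\sim\log n$. The paper removes the logarithm by a $4$-adic partition of the \emph{time} axis into blocks $B_i=\{n-\lfloor n/4^{i-1}\rfloor+1,\dots,n-\lfloor n/4^i\rfloor\}$, so that on each $B_i$ one has the uniform bound $k/(n-t+1)\le k4^i/n$; Markov's inequality then controls each within-block LIS $L_i$, and the geometric sum $L\le\sum_i L_i$ produces the constant $4e$ (Proposition~\ref{prop:UpperBoundL}). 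The paper explicitly notes in the remark following that proposition that the unblocked approach does not work directly.

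\textbf{In-probability lower tail.} Even granting $\Var(L)=O(n)$, Chebyshev gives only $\P\bigl(|L-\E L|>\epsilon\sqrt{kn}\bigr)=O(1/(\epsilon^2 k_n))$, which does \emph{not} tend to $0$ when $k_n$ is bounded; your appeal to ``classical results'' is for $k=1$ only and says nothing about fixed $k\ge 2$. The paper does not go through concentration here at all: the in-probability statement \eqref{eq:ScalingOfL} is read off directly from the exponential tail bounds in Propositions~\ref{prop:UpperBoundL} and~\ref{prop:LowerBoundL}, and the variance estimate (Proposition~\ref{prop:VarL}) is used only for the weak law in Theorem~\ref{thm:WeakLawL}.
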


In the case $k_n \rightarrow \infty$, we also obtain existence of a weak law of large numbers, 
though we do not know the exact constant for the weak law. 

\begin{The}[Weak Law of Large Numbers]
\label{thm:WeakLawL}
If $(k_n)_{n=1}^{\infty}$ is any sequence of positive integers such that $k_n \rightarrow \infty$ with $k_n = o(n)$, then
\begin{align*}
\frac{L}{\E_{n,k_n}(L)} \stackrel{p.}{\longrightarrow} 1 ~,~ \mbox { as } n \rightarrow \infty.
\end{align*}
\end{The}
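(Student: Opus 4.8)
The plan is to combine a bounded-differences concentration inequality for $L$ with the lower bound on $\E_{n,k_n}(L)$ already supplied by \Theref{thm:ScalingOfL}. The starting observation is that the $k$CM permutation $\sigma$ is a deterministic function of the \emph{rank sequence} $(R_1,\dots,R_n)$, where $R_t\in\{1,\dots,n-t+1\}$ is the rank of the removed card $C_t$ within the remaining deck $D_t$: given the ranks, step $t$ simply deletes the $R_t$-th smallest element of $D_t$. Moreover the conditional law of $R_t$ given $R_1,\dots,R_{t-1}$ depends only on the deterministic quantity $|D_t|=n-t+1$ --- it is the law of the rank of the minimum of $k$ i.i.d.\ uniform samples from a set of that size --- so the $R_t$ are \emph{independent}. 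Thus $L=L(\sigma)=g(R_1,\dots,R_n)$ for a fixed function $g$ of $n$ independent inputs, and McDiarmid's inequality will yield concentration once we bound the effect on $g$ of altering a single coordinate.

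The crux is the Lipschitz estimate: \emph{if $\sigma$ and $\sigma'$ are produced by rank sequences that agree in every coordinate except the $t$-th, then $|L(\sigma)-L(\sigma')|\le 1$.} Since $L(\cdot)=n-d_R(\cdot,\mathrm{id})$ and $d_R$ is a metric, it suffices to prove $d_R(\sigma,\sigma')\le 1$, i.e.\ that $\sigma'$ is obtained from $\sigma$ by a single reinsertion; and as $L$ is invariant under $\sigma\mapsto\sigma^{-1}$, it is equally good to show that $\sigma'^{-1}$ is obtained from $\sigma^{-1}$ by a single reinsertion. Writing $r<r'$ for the two values of the $t$-th rank and $a<b$ for the $r$-th and $r'$-th smallest cards in $D_t$ (the cards removed at time $t$ by $\sigma$ and $\sigma'$ respectively), one follows the two runs forward from time $t$: they differ only through a single ``gap'' that migrates as further cards are deleted and eventually closes, after which the two runs coincide exactly. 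The content of the lemma is that the net effect of this cascade is to move one card --- card $b$, or, read in the removal-time word $\sigma^{-1}$, the letter $t$ --- with every other card keeping its relative order, so that one of $\sigma,\sigma'$ is a single reinsertion of the other (or the analogous statement holds for the inverses). Carrying out this combinatorial bookkeeping --- in particular keeping straight which cards near $a$ and $b$ are still present at time $t$ and which have already been removed and hence do not move --- is the main obstacle in the proof. I note that for the weak law there is room to spare: it would suffice to prove $|L(\sigma)-L(\sigma')|\le C$ for some absolute constant $C$.

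Granting this lemma, McDiarmid's bounded-differences inequality, with all $n$ difference constants equal to $1$, gives $\P_{n,k_n}\bigl(|L-\E_{n,k_n}(L)|\ge s\bigr)\le 2\exp(-2s^2/n)$ for every $s>0$. By \Theref{thm:ScalingOfL} we have $\liminf_{n\to\infty}\E_{n,k_n}(L)/\sqrt{k_n n}\ge 1/2$, so for all large $n$, $\E_{n,k_n}(L)\ge\tfrac14\sqrt{k_n n}$ and hence $\E_{n,k_n}(L)^2/n\ge k_n/16\to\infty$ --- this is the only place the hypotheses are used, and only through the fact that $\E_{n,k_n}(L)$ grows faster than $\sqrt n$. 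Taking $s=\epsilon\,\E_{n,k_n}(L)$ for a fixed $\epsilon>0$ then gives
\begin{align*}
\P_{n,k_n}\!\left(\left|\frac{L}{\E_{n,k_n}(L)}-1\right|\ge\epsilon\right)\le 2\exp\!\left(-2\epsilon^2\,\frac{\E_{n,k_n}(L)^2}{n}\right)\le 2\exp\!\left(-\frac{\epsilon^2 k_n}{8}\right)\xrightarrow[n\to\infty]{}0,
\end{align*}
which is precisely the assertion $L/\E_{n,k_n}(L)\stackrel{p.}{\longrightarrow}1$.
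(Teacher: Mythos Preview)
Your overall architecture is exactly the paper's: write $L$ as a function of the independent rank sequence $\Ct_1,\dots,\Ct_n$, prove a unit Lipschitz bound with respect to each coordinate, and combine concentration with the lower bound on $\E_{n,k_n}(L)$ from \Theref{thm:ScalingOfL}. Your use of McDiarmid in place of the paper's Efron--Stein $+$ Chebyshev is a harmless (indeed sharper) variation; either yields the weak law once the Lipschitz lemma is in hand.

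The genuine gap is your argument for the Lipschitz lemma. The claim that changing one rank produces a permutation at $d_R$-distance $1$ from the original (for $\sigma$ or for $\sigma^{-1}$) is false. Take $n=5$, rank sequences $(3,2,2,1,1)$ and $(3,4,2,1,1)$, differing only in $R_2$. These give
\[
\sigma=(3,2,4,1,5),\qquad \sigma'=(3,5,2,1,4),
\]
with inverses $\sigma^{-1}=(4,2,1,3,5)$ and $\sigma'^{-1}=(4,3,1,5,2)$. A direct check shows that the longest common subsequence of $\sigma$ and $\sigma'$ has length $3$, so $d_R(\sigma,\sigma')=2$; the same holds for the inverses. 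Thus neither pair is related by a single reinsertion, and your ``one card moves, everything else keeps its relative order'' picture of the cascade is not correct. (In fact $d_R(\sigma,\sigma')$ is not bounded by any absolute constant along this family, so the $d_R$ route does not obviously yield even the weaker bound $|L(\sigma)-L(\sigma')|\le C$ you mention.)

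The paper proves $|L(\sigma)-L(\sigma')|\le 1$ directly (its \Lemref{lem:ChangeLBy1}), without passing through $d_R$. The key structural fact is that for $\tau>t$ the tails $(C_\tau)_{\tau>t}$ and $(C'_\tau)_{\tau>t}$ are related by an \emph{order-preserving} relabeling of the remaining decks $D_{t+1}\to D'_{t+1}$, so increasing subsequences in one tail correspond bijectively to increasing subsequences in the other. One then takes a longest increasing subsequence of $\sigma$, splits it at time $t$, and shows by a short case analysis (depending on whether and where the subsequence uses position $t$, and using $i'_\tau\le i_\tau$ for $\tau>t$ when $j'_t>j_t$) that after dropping at most one term it survives as an increasing subsequence of $\sigma'$; the reverse inequality is symmetric. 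If you replace your reinsertion paragraph with this argument, the rest of your proof goes through unchanged.
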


A central piece of the proof of Theorem \ref{thm:WeakLawL} is the following variance estimate for 
$L$, which, interestingly, does not depend of $k$. 

\begin{Prop}
\label{prop:VarL}
For any $k,n \in \N$,
\begin{align}
\label{eq:VarL}
\Var_{n,k}(L) \leq n/4.
\end{align}
\end{Prop}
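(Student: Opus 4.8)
The plan is to realize $L$ as a function of a sequence of independent random variables and apply the Efron--Stein inequality; the heart of the matter is to show that resampling any single one of these variables moves $L$ by at most $1$, in fact confining it to two consecutive integers. Run the $k$CM procedure and let $R_t\in\{1,\dots,n-t+1\}$ be the rank of the selected card $C_t$ within the remaining deck $D_t$, so that $R_t=1$ means the smallest remaining card was taken. Conditionally on $D_t$ --- equivalently, on $R_1,\dots,R_{t-1}$ --- one has $\P_{n,k}(R_t\ge j)=((m-j+1)/m)^{k}$ with $m=|D_t|=n-t+1$, which depends only on $m$ and not on the actual contents of $D_t$; hence $R_1,\dots,R_n$ are independent, and $\sigma$ (so also $L$) is a deterministic function $L=f(R_1,\dots,R_n)$. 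The Efron--Stein inequality then gives
\[
\Var_{n,k}(L)\ \le\ \sum_{t=1}^{n}\E_{n,k}\big[\Var(f \mid (R_j)_{j\ne t})\big],
\]
the inner variance being over $R_t$ alone. Since a random variable supported on a set of two consecutive integers has variance at most $1/4$, \Propref{prop:VarL} follows once we establish the combinatorial claim: for each $t$ and each fixed value of $(R_j)_{j\ne t}$, the map $R_t\mapsto f$ takes at most two values, and these are consecutive integers.

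\textbf{How the permutation depends on $R_t$.} Fix $t$ and the coordinates $(R_j)_{j\ne t}$. This fixes the deck $D_t=\{d_1<\dots<d_m\}$, the prefix of already-dealt cards $P=(C_1,\dots,C_{t-1})$, and the ranks used at times $>t$. Write $\sigma^{(r)}$ for the permutation obtained when $R_t=r$, so $C_t=d_r$. Unwinding the ``deal-by-ranks'' recursion shows that $\sigma^{(r)}$ is obtained from $\sigma^{(1)}$ by relabelling only the entries whose value lies in $D_t$, via the cyclic permutation $d_1\mapsto d_r\mapsto d_{r-1}\mapsto\dots\mapsto d_2\mapsto d_1$; consequently $\sigma^{(r+1)}$ differs from $\sigma^{(r)}$ by the single transposition that exchanges the positions of the two \emph{consecutive} deck-values $d_r$ and $d_{r+1}$. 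Since $D_t$ contains no value strictly between $d_r$ and $d_{r+1}$, while all positions lying between theirs carry values of $D_t$, any increasing subsequence of $\sigma^{(r)}$ using both $d_r$ and $d_{r+1}$ must use them as consecutive terms; combined with the elementary fact that deleting one entry changes $L$ by at most $1$, this yields $|L(\sigma^{(r)})-L(\sigma^{(r+1)})|\le 1$ for every $r$.

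\textbf{The main obstacle.} Pairwise adjacency of the $L(\sigma^{(r)})$ is not enough; we need all of $L(\sigma^{(1)}),\dots,L(\sigma^{(m)})$ to lie in a common window of width $1$. The slick argument available for the uniform case $k=1$ --- where deleting position $t$ from $\sigma^{(r)}$ leaves an order type \emph{independent of} $r$, so that $L(\sigma^{(r)})$ is pinned into $\{\ell_0,\ell_0+1\}$ by the ``insert one entry'' bound --- fails here, because the sequence left after deleting position $t$ has an order type that genuinely varies with $r$ whenever $P$ interleaves the values of $D_t$. My plan is instead an induction on the deck size $m$: deleting the value $d_1=\min D_t$ from $\sigma^{(r)}$ (for $r\ge 2$) produces, by the same recursion, exactly a family of the above type for the smaller deck $D_t\setminus\{d_1\}$ --- same prefix $P$, front-rank now ranging over $\{1,\dots,m-1\}$, and a post-$t$ rank-sequence that does not depend on $r$ --- so the induction hypothesis puts $\{L(\sigma^{(r)} \text{ with } d_1 \text{ deleted}) : r\ge 2\}$ in a window of width $1$.

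It then remains to show that re-inserting $d_1$, which sits below every value still in the deck, cannot widen this window to $2$: concretely, that the insertion raises $L$ by $1$ precisely on the ``low'' configurations, and that the value $r=1$ fits into the same window. I would argue this through the patience-sorting description of $L$ --- $d_1$ can be prepended to a witnessing increasing subsequence exactly when that subsequence is not already full at the bottom --- with the base case $m=1$ trivial. Making this last coordination step rigorous is where I expect the real work to lie; everything before it (the independence of the $R_t$, the cyclic-relabelling description, and the reduction of the proposition to the width-$1$ claim) is routine.
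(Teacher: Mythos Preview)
Your framework --- write $L$ as a function of the independent relative ranks $R_1,\dots,R_n$ and apply Efron--Stein --- is exactly the paper's. The required combinatorial lemma is also the same: fixing $(R_j)_{j\ne t}$, the values $\{L(\sigma^{(r)}):1\le r\le m\}$ lie in a set of two consecutive integers. The gap is in your proof of that lemma.

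You establish only the pairwise bound $|L(\sigma^{(r)})-L(\sigma^{(r+1)})|\le 1$ and then correctly flag that this is insufficient. Your proposed repair --- induct on $m$ by deleting $d_1$ and argue that re-inserting a global minimum cannot widen the window --- is not carried through, and I do not see how to make the ``coordination step'' work: the problem is that which $r$'s get the $+1$ from inserting $d_1$ is governed by where $d_1$ sits relative to the prefix $P$, and there is no obvious reason this set of $r$'s should align with the ``low'' side of the inductive window. Patience sorting does not obviously rescue this either, since the pile structure after the deletion varies with $r$.

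The paper avoids this detour entirely. Instead of chaining through consecutive $r$, it compares $\sigma^{(r)}$ and $\sigma^{(r')}$ \emph{directly} for arbitrary $r<r'$. The key observation you are missing is a monotone coupling: with the same post-$t$ rank sequence, the $j$-th smallest element of $D_t\setminus\{d_r\}$ dominates the $j$-th smallest of $D_t\setminus\{d_{r'}\}$ for every $j$, so $i_\tau\ge i'_\tau$ at every position $\tau>t$ while the \emph{relative order} of those positions is identical in the two permutations. From this one argues, by splitting a longest increasing subsequence at position $t$ and transporting each half (dropping at most one element in the harder direction), that $|L(\sigma^{(r)})-L(\sigma^{(r')})|\le 1$ for \emph{every} pair $r,r'$, which is exactly the window-of-width-$1$ claim. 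This direct two-sided transport, with a short case analysis on whether position $t$ is used and where the split falls, replaces your induction entirely and is where the substance of the argument lies.
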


\begin{Rems} ~
\begin{enumerate}
\item The constants $1/2$ and $4e$ in Theorem \ref{thm:ScalingOfL} can be improved a bit by a somewhat more careful 
analysis than we give here. However, we do not believe they can be made to match without substantially different methods. 
\item Our theorem shows that $L$ is increased by roughly a factor of $\sqrt{k_n}$ from the uniform $2\sqrt{n}$ scaling,
up to moderate corrections. However, for $k_n = k$ fixed and small these moderate corrections are of the same order as 
the $\sqrt{k}$ increase. Thus, the theorem is most informative only for fixed large $k$ or growing $k$. 
\item It is natural to consider how fast the sequence $(k_n)$ must grow to increase the scaling rate of $L$ from the order $n^{1/2}$ 
uniform scaling to a larger power law $n^{\alpha}$, for some $\alpha > 1/2$. According to our theorem, one must also take 
$k_n$ as a power law, $k_n \approx n^{\beta}$ where $\alpha = 1/2 + \beta/2$. Since any $\beta \in (0,1)$ is possible, while 
still maintaining $k_n = o(n)$, any $\alpha \in (1/2,1)$ is also possible. 
\end{enumerate}
\end{Rems}

\subsection{Optimality Results for the Minimum Strategy} 
\label{subsec:OptimalityResults}

In the $k$CM procedure we are given $k$ independent random card choices $C_{t,1},...,C_{t,k}$ from the remaining set $D_t$ 
at each step, and we select the lowest numbered of these. This selection rule was chosen in order to create a more ordered final permutation, 
and it is a simple and natural rule for doing so. However, it is reasonable to ask if some other selection rule may be better for this purpose. 
More generally, given any real-valued statistic $X = X(\sigma)$ to maximize or minimize (for us $X = I$ or $L$), one may ask if a given 
selection rule or \emph{strategy} is optimal for maximizing or minimizing this statistic. 

The $k$CM strategy $\SM_{\min}$, in which the minimum of the $k$ card choices is always selected, is quite simple. 
It does not depend explicitly on $n$ or $k$, or on any of the previously removed cards $C_1, ..., C_{t-1}$. In principal, though, 
it is reasonable to allow a strategy to depend explicitly on both $n$ and $k$, as well as the cards $C_1,...,C_{t-1}$ removed 
before time $t$, as these will be known to an individual making the card selections. We, thus, define a 
\emph{$(k,n)$ choice strategy} as follows. 

\begin{Def}
For $k,n \in \N$, a \emph{$(k,n)$ choice strategy} $\SM$ is an $n$-tuple of choice functions $\SM = (f_1,...,f_n)$ 
where for each $t \in [n]:$
\begin{itemize}
\item The domain of $f_t$ is the set of allowable input pairs $((c_1,...,c_{t-1}),(c_{t,1},...,c_{t,k}))$ such that each $c_{\tau}, c_{t,i} \in [n]$, 
$c_{\tau} \not= c_{\tau'}$ for all $\tau \not= \tau'$, and $c_{t,i} \not= c_{\tau}$ for each $1 \leq i \leq k$ and $1 \leq \tau \leq t-1$.
\item $f_t [ ((c_1,...,c_{t-1}),(c_{t,1},...,c_{t,k})) ] \in \{c_{t,1},...,c_{t,k}\}$ for each possible input \\$((c_1,...,c_{t-1}),(c_{t,1},...,c_{t,k}))$. 
\end{itemize}
On the event $\{C_{\tau} = c_{\tau}, 1 \leq \tau \leq t-1 ~\mbox{ and }~ C_{t,i} = c_{t,i}, 1 \leq i \leq k\}$ the strategy 
$\SM$ selects $C_t \in \{c_{t,1},...,c_{t,k}\}$ by the rule $C_t = f_t [ ((c_1,...,c_{t-1}),(c_{t,1},...,c_{t,k})) ]$.
\end{Def}

We say a (k,n) choice strategy $\SM$ is \emph{stochastically optimal for maximizing} a real-valued statistic $X = X(\sigma)$ if
for every other $(k,n)$ choice strategy $\hat{\SM}$ we have
\begin{align*}
\P_{n,k}^{\SM}(X \geq x) \geq \P_{n,k}^{\hat{\SM}}(X \geq x) ~,~ \mbox{ for all } x \in \R.
\end{align*}
Similarly, we say a (k,n) choice strategy $\SM$ is \emph{stochastically optimal for minimizing} a real-valued statistic $X = X(\sigma)$ if
for every other $(k,n)$ choice strategy $\hat{\SM}$ we have
\begin{align*}
\P_{n,k}^{\SM}(X \leq x) \geq \P_{n,k}^{\hat{\SM}}(X \leq x) ~,~ \mbox{ for all } x \in \R.
\end{align*}

The following proposition shows that the simple strategy $\SM_{\min}$ is actually the best strategy for minimizing inversions. 

\begin{Prop}
\label{eq:MinStrategyOptimalForI}
For each $k,n \in \N$, the $k$-card-minimum strategy $\SM_{\min}$ is stochastically optimal for minimizing $I$.
\end{Prop}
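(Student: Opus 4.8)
The plan is to establish stochastic optimality by a coupling/exchange argument, showing that any deviation from the minimum rule at a single step can only increase (stochastically) the final number of inversions, and then iterating. First I would set up the key monotonicity fact: if $\sigma$ and $\sigma'$ are two permutations that agree on the first $t-1$ positions, and the multiset of remaining symbols after position $t-1$ is the same, then placing a \emph{smaller} value in position $t$ leads to fewer inversions contributed by position $t$ against all later positions (regardless of how the later positions are filled), with equality conditions tracked carefully. The precise statement I would prove first is a lemma: for any two values $a < b$ available at step $t$ and any way of completing the permutation using the remaining symbols, the completion that puts $a$ at position $t$ has $I \le$ the completion that puts $b$ at position $t$, provided the two completions are coupled to place the common remaining symbols in the same relative order on positions $t+1,\dots,n$.

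Next I would use this to run a hybrid/exchange argument over strategies. Fix an arbitrary $(k,n)$ choice strategy $\hat\SM$ and consider the first time step $t$ at which $\hat\SM$ can differ from $\SM_{\min}$ on some input. Define an intermediate strategy $\SM'$ that agrees with $\SM_{\min}$ through step $t$ and thereafter does something at least as good as $\hat\SM$; the goal is $\P^{\SM'}_{n,k}(I \le x) \ge \P^{\hat\SM}_{n,k}(I \le x)$ for all $x$. To do this I would couple the two processes on the same sequence of i.i.d. uniform draws $(C_{\tau,i})$, so they agree up to step $t-1$, and at step $t$ both see the same $k$ choices; $\SM_{\min}$ picks $C_t = a := \min$, while $\hat\SM$ picks some $C_t = b \ge a$. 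From step $t+1$ on, I would have $\SM'$ simulate $\hat\SM$ via a bijection between their remaining decks that swaps the roles of $a$ and $b$ — i.e. whenever $\hat\SM$'s process would pick $b$ (which is still in its deck), $\SM'$ picks $a$ from the corresponding position, and vice versa, matching all other picks exactly. This produces two permutations differing by the transposition of the values $a$ and $b$, with $a$ appearing earlier under $\SM'$; by the lemma $I(\SM') \le I(\hat\SM)$ pointwise on this coupling, which gives the stochastic domination of the running statistic. Then I would note that $\SM'$ is itself a legitimate $(k,n)$ choice strategy and induct on the number of steps at which the strategy differs from $\SM_{\min}$ (or, more cleanly, induct downward on the first disagreement time), concluding that $\SM_{\min}$ dominates every $\hat\SM$.

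The main obstacle I anticipate is the bookkeeping in the coupling at the deck level: after the swap at step $t$, the two processes have \emph{different} decks ($\SM'$ removed $a$, $\hat\SM$ removed $b$), so I need the relabeling bijection $\phi$ that exchanges $a\leftrightarrow b$ and fixes everything else, and I must check that running $\hat\SM$'s choice function $f_\tau$ on the $\phi$-images of $\SM'$'s history yields valid, consistent choices — and that the uniform draws can be coupled so that the images of $\SM'$'s $k$ choices at each later step are exactly $\hat\SM$'s $k$ choices. Because $\hat\SM$ is allowed to depend on the full history, I have to feed it the relabeled history and argue measurability/consistency; the cleanest route is to define $\SM'$ directly by $f'_\tau(\text{hist}, \text{choices}) = \phi^{-1}\big(f^{\hat\SM}_\tau(\phi(\text{hist}), \phi(\text{choices}))\big)$ for $\tau > t$ on the relevant event, and to verify the resulting card at step $\tau$ is indeed $\phi^{-1}$ of $\hat\SM$'s card, so that the final permutations are related by $\sigma_{\SM'} = \phi \circ \sigma_{\hat\SM}$ on positions $t,\dots,n$ up to the single exchange. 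Once that correspondence is nailed down, the inversion comparison reduces to a short combinatorial check: swapping the positions of $a$ and $b$ so that the smaller one moves earlier never increases the count $|\{i<j:\sigma(i)>\sigma(j)\}|$, since it only affects pairs involving positions of $a$ or $b$, and a direct case analysis on the location of any third symbol relative to these two positions shows each such pair's inversion-indicator is non-increasing.
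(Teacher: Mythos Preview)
Your exchange/hybrid argument has a real gap at the point where you define the intermediate strategy $\SM'$. You set $f'_\tau(\text{hist},\text{choices}) = \phi^{-1}\bigl(f^{\hat\SM}_\tau(\phi(\text{hist}),\phi(\text{choices}))\bigr)$ for $\tau>t$, but $\phi$ is the transposition $(a\;b)$ with $b = f^{\hat\SM}_t\bigl((c_1,\dots,c_{t-1}),(c_{t,1},\dots,c_{t,k})\bigr)$. Under the paper's definition of a $(k,n)$ choice strategy, $f'_\tau$ receives only the selected cards $(c_1,\dots,c_{\tau-1})$ and the current offers $(c_{\tau,1},\dots,c_{\tau,k})$; it does \emph{not} see the offers $(c_{t,1},\dots,c_{t,k})$ made at the earlier step $t$. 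Since $b$ (hence $\phi$) depends on those unseen offers, your $\SM'$ is not a legitimate $(k,n)$ choice strategy, and the induction on first-disagreement time breaks: you cannot invoke the inductive hypothesis on $\SM'$. The gap is patchable --- for instance by enlarging the strategy class to allow dependence on all past offers, proving optimality of $\SM_{\min}$ there, and noting $\SM_{\min}$ lies in the original class --- but you have not done this, and it is exactly the ``bookkeeping'' you flagged as the main obstacle.

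By contrast, the paper's proof dispenses with hybrids and induction entirely, using a structural fact you did not exploit: writing $I=\sum_{t=1}^{n-1} I_t$ with $I_t=\Ct_t-1$ (the relative rank of $C_t$ in the remaining deck, minus one). One then couples $\SM_{\min}$ and any $\hat\SM$ so that at every step the $k$ offered cards occupy the \emph{same relative positions} in the two remaining decks. Under this single global coupling, $\Ct_t^{\min}=\min_i \Ct_{t,i}\le \Ct_t^{\hat\SM}$ for every $t$ pointwise, hence $I_t^{\min}\le I_t^{\hat\SM}$ and $I^{\min}\le I^{\hat\SM}$ almost surely. This gives stochastic optimality in three lines, with no swap bijection, no case analysis on third symbols, and no need for intermediate strategies. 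Your transposition lemma is correct and your coupling at a single step is fine; the missing idea is that coupling by relative ranks throughout lets you compare all steps simultaneously rather than one at a time.
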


The situation for the longest increasing subsequence is more complicated, though, and $\SM_{min}$ is no
longer optimal. In fact, no optimal strategy exists for any $k \geq 2$ and $n \geq 5$. 

\begin{Prop} 
\label{eq:MinStrategyNotOptimalForL}
For any $k \geq 2$ and $n \geq 5$ there is no $(k,n)$ stochastically optimal strategy for maximizing $L$. 
However, for each $k \geq 2$ and $n \geq 4$ there exists a strategy $\SM_{copy}$, which is strictly 
better than the $k$-card-minimum strategy for maximizing $L$. That is, 
\begin{align*}
\P_{n,k}^{\SM_{copy}}(L \geq x) \geq \P_{n,k}^{\SM_{\min}}(L \geq x)
\end{align*}
for all $x \in \R$, with strict inequality for some values of $x$. 
\end{Prop}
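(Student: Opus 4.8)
The plan is to handle the two assertions separately: the existence of $\SM_{copy}$ by exhibiting an explicit one--configuration modification of $\SM_{\min}$ together with a coupling, and the non--existence of an optimal strategy by a ``conflicting thresholds'' argument at a node that every strategy is forced to reach with positive probability.

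\emph{Construction of $\SM_{copy}$.} I would define $\SM_{copy}$ to agree with $\SM_{\min}$ at every step \emph{except} that whenever the remaining deck is exactly $\{1,3,4\}$ and the $k$ drawn cards include both $1$ and $3$, the strategy selects card $3$ rather than card $1$. Since the deck can equal $\{1,3,4\}$ at most once along any run, the two procedures couple easily: run them with identical draws until (if ever) this configuration occurs; at that instant they share a common partial history $h$ (a permutation of $[n]\setminus\{1,3,4\}$) and deck $\{1,3,4\}$, and from then on couple the final two steps by drawing $k$ i.i.d.\ ranks in the respective $2$--element decks. Under this coupling the permutation produced by $\SM_{\min}$ is $(h,1,3,4)$ or $(h,1,4,3)$, and the one produced by $\SM_{copy}$ is $(h,3,1,4)$ or $(h,3,4,1)$, paired according to whether the ranks drawn at the next step were all maximal or not. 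The heart of the matter is then the elementary observation that for every permutation $h$ of $[n]\setminus\{1,3,4\}$ one has $L(h,3,1,4)=L(h,1,3,4)$ and $L(h,3,4,1)\geq L(h,1,4,3)$, with strict inequality in the second relation whenever $L(h)\leq 2$; indeed each of these four values equals $\max\{L(h),c\}$ with $c=3$ for the words $(1,3,4),(3,1,4),(3,4,1)$ and $c=2$ for $(1,4,3)$, because the only card of $h$ below $3$ is the card $2$. Hence $L$ under $\SM_{copy}$ pointwise dominates $L$ under $\SM_{\min}$ on the coupled space, giving $\P_{n,k}^{\SM_{copy}}(L\geq x)\geq \P_{n,k}^{\SM_{\min}}(L\geq x)$ for all $x$; and the inequality is strict at $x=3$, witnessed by the positive--probability event on which all--equal draws force the common history to be $(2,n,n-1,\ldots,5)$, the next draw produces cards $1$ and $3$, and the draw after that is $k$ copies of the larger remaining card (so $L=3$ under $\SM_{copy}$ but $L=2$ under $\SM_{\min}$). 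This argument incidentally re--proves that $\SM_{\min}$ is not optimal for $L$.

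\emph{No optimal strategy for $L$.} Suppose, for contradiction, that $\SM^{*}$ is stochastically optimal for maximizing $L$; then $\P_{n,k}^{\SM^{*}}(L\geq x)=\sup_{\SM}\P_{n,k}^{\SM}(L\geq x)$ for every $x$, so $\SM^{*}$ is simultaneously optimal for each individual threshold. The key device is that a run of ``all--equal'' draws forces \emph{every} strategy to play a prescribed card; this makes a chosen node reachable with positive probability under $\SM^{*}$ and reduces the analysis to a fixed five--card sub--problem independent of $n$. Concretely, force the successive removal of $n,n-1,\ldots,6$ --- a decreasing block contributing only $1$ to any increasing subsequence, and all of whose values exceed everything played afterwards --- and then force the removal of card $3$; this reaches a node $\nu$ with deck $\{1,2,4,5\}$, at which I take the draw to consist only of $1$'s and $4$'s, so the choice at $\nu$ is between cards $1$ and $4$. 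A short finite analysis of the $k$CM procedure on the remaining four cards shows: after playing $4$ at $\nu$ the final value of $L$ equals $3$ with probability $1$ and never exceeds $3$; after playing $1$ at $\nu$ one has $\P(L\geq 4)=(1-(2/3)^{k})(1-(1/2)^{k})>0$ under optimal continuation, but also $\P(L\geq 3)\leq 1-(1/6)^{k}<1$, since the forced sub--sequence of draws producing the suffix $5,4,2$ gives $L=2$. Thus at $\nu$ playing $1$ is strictly better for the threshold $x=4$ while playing $4$ is strictly better for $x=3$. Whichever card $\SM^{*}$ prescribes at $\nu$, replace that choice by the other card and play optimally for the relevant threshold along all histories descending from it (possible, since on the finite decision tree an optimal strategy for a single threshold exists by backward induction); because $\nu$ is reached with positive probability, this strictly increases $\P_{n,k}(L\geq x)$ for $x=3$ or for $x=4$, contradicting the optimality of $\SM^{*}$. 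This requires $k\geq 2$ (otherwise there is no choice) and $n\geq 5$ (so that cards $1,\ldots,5$ exist).

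\emph{Where the work is.} The structural parts above are routine; the real content lies in two finite verifications --- the $L$--identities for the four five--letter words $(h,\cdot,\cdot,\cdot)$, and the exact small--case computation of the optimal tail probabilities of $L$ after the two choices at $\nu$. The one genuine subtlety is that these facts, and in particular the ``crossing'' at $\nu$, must persist for \emph{all} $k\geq 2$, including very large $k$ where the procedure is nearly deterministic; this is what dictates the placement of the pivotal choice at $\nu$ with only three further cards remaining, so that every relevant probability is an explicit expression in $k$ staying strictly between $0$ and $1$. A secondary point requiring care is verifying that the forced prefix never interferes with the longest increasing subsequence, which is exactly why it is chosen to be decreasing and to consist entirely of values larger than the cards that follow.
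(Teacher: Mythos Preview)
Your argument is correct; both parts go through, and the finite verifications you identify are indeed the only places one has to compute carefully. Your proof follows the same skeleton as the paper's --- exhibit an explicit one-node modification of $\SM_{\min}$, and then show no strategy can be simultaneously optimal at two conflicting thresholds by forcing a specific reachable node --- but the concrete constructions differ in both halves.

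For $\SM_{copy}$, the paper deviates from $\SM_{\min}$ only along the single history $C_1=2,\dots,C_{n-3}=n-2$ with the time-$(n-2)$ draw consisting of copies of $1$ and $n-1$, selecting $n-1$ instead of $1$; this immediately secures the increasing run $(2,\dots,n)$ of length $n-1$ and requires no coupling. Your $\SM_{copy}$ instead triggers at the deck $\{1,3,4\}$ regardless of the preceding history, which is more ``intrinsic'' but then forces you to prove domination for arbitrary prefixes $h$; your rank-coupling and the identities $L(h,3,1,4)=L(h,1,3,4)$, $L(h,3,4,1)\geq L(h,1,4,3)$ handle this cleanly.

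For the non-existence of an optimal strategy, the paper forces the \emph{increasing} prefix $(\ell+1,\dots,n-2)$ and then a choice between $1$ and $n-1$, with the conflicting thresholds $m=\lceil n/2\rceil$ and $\ell+2$ growing with $n$. You instead force a \emph{decreasing} prefix $(n,\dots,6)$ followed by card $3$, so that the prefix contributes nothing to any increasing subsequence and the problem localizes to the fixed five-card endgame on $\{1,2,3,4,5\}$; your conflicting thresholds are then always $3$ and $4$, independent of $n$. This localization is a pleasant simplification, at the cost of having to enumerate the six orderings of $\{1,2,5\}$ and the six of $\{2,4,5\}$ explicitly. Both approaches rely on the same forcing device (all-equal draws to remove choice), and both need $k\geq 2$ only to ensure a genuine decision exists at the pivotal node.
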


Nevertheless, as shown by Theorem \ref{thm:ScalingOfL}, the minimum strategy $\SM_{\min}$ still increases $L$ 
substantially compared to the uniform case for large fixed $k$ or growing $k$. Moreover, although the minimum strategy is 
strictly dominated by $\SM_{copy}$, this dominance is very weak. As the name suggests, $\SM_{copy}$ copies $\SM_{\min}$ 
almost all of the time, and only behaves differently in very specific instances, for which it can increase $L$ by 1. Thus, $\SM_{\min}$
may still be ``essentially optimal'' in the sense of the scaling rate given by Theorem \ref{thm:ScalingOfL}. A natural question, for which 
we do not yet have an answer, is whether the minimum strategy is, indeed, optimal in terms of this scaling rate. More precisely: \\

\noindent
\textbf{Question} - Does there exist some absolute constant $B > 0$  such that for any sequence $(k_n)_{n=1}^{\infty}$ with $k_n = o(n)$ 
and any sequence of $(k_n,n)$ choice strategies $(\SM_n)_{n=1}^{\infty}$, 
\begin{align*}
\lim_{n \to \infty} \P_{n,k_n}^{\SM_n} \left(L \leq B \sqrt{k_n n}\right) = 1 ~?
\end{align*}

\section{Analysis of Inversions}
\label{sec:AnalysisOfInversions}

In this section we analyze statistics of the number of inversions $I$ in a $k$CM random permutation, 
proving the results of Sections \ref{subsec:InversionResultsFixedk} and \ref{subsec:InversionResultsGrowingk}. 
We treat separately the case of fixed $k$ in Section \ref{subsec:FixedkCase} and the case of growing $k$ in Section 
\ref{subsec:GrowingkCase}. First, however, we begin with some general set up in Sections \ref{subsec:Preliminaries} 
and \ref{subsec:EstimateEandVarIt} that will be used in both cases. 

\subsection{Preliminaries}
\label{subsec:Preliminaries}

\subsubsection{Relative Positions}
\label{subsubsec:Relative Positions}

The sequence $(C_t)_{t=1}^n$ is highly dependent. Partly this is because of simple exclusion; if $C_t = i$ then 
$C_{\tau} \not= i$, for all $\tau \not= t$. However, unlike in the uniform case, there is strong dependence beyond this 
as well since the kCM procedure has a bias towards selecting lower numbered of the remaining cards at each step. 
Theoretically at least, at any time $t > \max\{i-1,n-i\}$ card $i$ could be the lowest or highest remaining card in the deck,
or anything in between. So, the probability that $C_t = i$ (assuming $i$ is still left at time $t$) depends heavily on 
which cards were removed at earlier times $\tau < t$. The main idea for analyzing inversions is to consider 
the relative card positions, which are independent, and, thus, circumvent this difficulty. 

For $t  = 1,...,n$ we define $\Ct_t$ to be the \emph{relative position} of card $C_t$ in the remaining 
set of cards $D_t$ from which it is selected: 
\begin{align*}
\Ct_t = j \iff C_t \mbox{ is the $j$-th lowest numbered card in $D_t$}.
\end{align*}
Since the selection rule for the $k$CM procedure depends only on the relative values of the $k$ card 
choices $C_{t,1},...,C_{t,k}$, and not on the actual numbers of these cards, the relative position $\Ct_t$ 
of the $t$-th card selected is independent of all cards removed up to time $t$, $C_1,...,C_{t-1}$. Hence, also, 
independent of the relative positions of the previously removed cards, $\Ct_1, ... , \Ct_{t-1}$. Since this holds 
for each $t = 1,...,n$, it  follows that the sequence of relative card positions $\Ct_1,...,\Ct_n$ is independent, 
as claimed above. 

The relation to inversions is as follows. If we define
\begin{align*}
I_t = |\{t+1\leq \tau \leq n : C_t > C_{\tau} \}|
\end{align*}
to be the number of cards selected at later times $\tau > t$, which are inverted with card $C_t$, then
\begin{align}
\label{eq:ItCttRelation}
I_t = \Ct_t - 1.
\end{align}
To see this, note that if the $j$-th lowest numbered of the remaining cards is selected at time $t$ (i.e. $\Ct_t = j$),
then there are exactly $j-1$ lower numbered cards remaining in the deck at time $t+1$, which eventually must be 
removed at times $\tau \geq t+1$. Thus, there will be exactly $j-1$ cards removed at times $\tau \geq t+1$, 
which are inverted with card $C_t$ (i.e. $I_t = j-1$). 

From (\ref{eq:ItCttRelation}) and independence of the relative positions $\Ct_1,...,\Ct_n$
it follows that $I_1,...,I_n$ are independent as well. These facts are summarized below
in the following proposition, which also characterizes the distribution of the random variables
$\Ct_t$ and $I_t$. 

\begin{Prop}
\label{prop:IndependenceAndDistributionItAndCtt}
For any $k,n \in \N$, the random variables $\Ct_1,...,\Ct_n$ are independent and the random variables $I_1,...,I_n$ are independent. 
Moreover, for each $t = 1,...,n$ we have
\begin{align}
\label{eq:DistributionCtt}
\P_{n,k}(\Ct_t > j) = \left( \frac{n-t+1-j}{n-t+1} \right)^k ~,~j = 0,1,...,n-t+1 
\end{align}
and
\begin{align}
\label{eq:DistributionIt}
\P_{n,k}(I_t > j) = \left( \frac{n-t-j}{n-t+1} \right)^k ~,~j = 0,1,...,n-t. 
\end{align}
\end{Prop}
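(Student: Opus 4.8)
The plan is to derive everything from a single conditional-distribution statement: for each $t$ and each admissible sequence $(c_1,\dots,c_{t-1})$ of distinct elements of $[n]$, the conditional law of $\Ct_t$ given $\{C_1=c_1,\dots,C_{t-1}=c_{t-1}\}$ equals the law of $\min\{U_1,\dots,U_k\}$, where $U_1,\dots,U_k$ are i.i.d.\ uniform on $\{1,\dots,m_t\}$ with $m_t := n-t+1$; in particular it does not depend on $(c_1,\dots,c_{t-1})$. To prove this, note that on the event $\{C_1=c_1,\dots,C_{t-1}=c_{t-1}\}$ the remaining deck $D_t = [n]\setminus\{c_1,\dots,c_{t-1}\}$ is a fixed set of size $m_t$; write its elements in increasing order as $d_{(1)}<\dots<d_{(m_t)}$. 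By construction of the $k$CM procedure, conditionally on this event $C_{t,1},\dots,C_{t,k}$ are i.i.d.\ uniform on $D_t$, so if $R_{t,i}$ denotes the rank of $C_{t,i}$ within $D_t$ (i.e.\ $C_{t,i}=d_{(R_{t,i})}$), then $R_{t,1},\dots,R_{t,k}$ are i.i.d.\ uniform on $\{1,\dots,m_t\}$. Since $\min$ commutes with the increasing bijection $r\mapsto d_{(r)}$, the selected card is $C_t = d_{(\min_i R_{t,i})}$ and its relative position is $\Ct_t = \min_i R_{t,i}$, which is exactly $\min\{U_1,\dots,U_k\}$ as claimed.

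From this claim, \eqnref{eq:DistributionCtt} is immediate: for $0\le j\le m_t$, $\P_{n,k}(\Ct_t>j)=\P(U_1>j,\dots,U_k>j)=\big((m_t-j)/m_t\big)^k$. Since \eqnref{eq:ItCttRelation} gives $I_t=\Ct_t-1$ deterministically, \eqnref{eq:DistributionIt} follows by replacing $j$ with $j+1$ (valid for $0\le j\le n-t$, since then $j+1\in\{1,\dots,m_t\}$).

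For independence, observe that $(\Ct_1,\dots,\Ct_{t-1})$ is a deterministic function of $(C_1,\dots,C_{t-1})$, since each $\Ct_\tau$ is the rank of $C_\tau$ inside the set $[n]\setminus\{C_1,\dots,C_{\tau-1}\}$. Hence the claim implies that $\Ct_t$ is independent of $(\Ct_1,\dots,\Ct_{t-1})$, with conditional law equal to its fixed marginal law $\mu_t$. A routine induction with the chain rule then yields mutual independence: $\P(\Ct_1=j_1,\dots,\Ct_n=j_n)=\P(\Ct_n=j_n\mid \Ct_1=j_1,\dots,\Ct_{n-1}=j_{n-1})\cdot\P(\Ct_1=j_1,\dots,\Ct_{n-1}=j_{n-1})=\mu_n(j_n)\prod_{\tau=1}^{n-1}\mu_\tau(j_\tau)$. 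Finally, each $I_\tau=\Ct_\tau-1$ is a deterministic function of $\Ct_\tau$ alone, so $I_1,\dots,I_n$ are mutually independent as well.

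The only real subtlety — and the ``main obstacle,'' such as it is — is making the first step rigorous: converting the informal remark that the selection rule ``sees only relative values'' into the precise assertion that the conditional law of $\Ct_t$ given the entire past is a fixed distribution depending on $t$ but not on the realized past. Once that conditional-law statement is in hand, both the explicit distributions and the full mutual independence are short formal consequences, and no estimates are required anywhere.
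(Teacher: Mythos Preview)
Your proof is correct and follows essentially the same approach as the paper: both argue that the conditional law of $\Ct_t$ given the past $(C_1,\dots,C_{t-1})$ is the fixed law of the minimum of $k$ i.i.d.\ uniforms on $\{1,\dots,n-t+1\}$, from which the distribution formulas and mutual independence follow. Your version is more explicit about the rank bijection and the chain-rule induction, but the underlying argument is the same.
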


\begin{proof}
Independence of the $\Ct_t$'s and independence of $I_t$'s was established above, and 
(\ref{eq:DistributionIt}) follows from (\ref{eq:ItCttRelation}) and (\ref{eq:DistributionCtt}).
Thus, it remains only to prove (\ref{eq:DistributionCtt}). 

To see (\ref{eq:DistributionCtt}), note that at time $t$ there are exactly $n-t+1$ cards left in the deck 
to pick from (i.e. $|D_t| = n - t + 1$), and $\Ct_t > j$ if and only if each of the $k$ independent random choices
$C_{t,1},...,C_{t,k}$ is greater than $D_{t,j} \equiv j$-th lowest card in $D_t$. Thus,  
\begin{align*}
\P_{n,k}(\Ct_t > j) = \prod_{i=1}^k \P_{n,k}(C_{t,i} > D_{t,j}) = \prod_{i=1}^k \frac{(n-t+1) - j}{n-t+1}.
\end{align*}
\end{proof}

Now, of course, the total number of inversions $I$ is simply
\begin{align}
\label{eq:IequalssumIts}
I = \sum_{t=1}^{n-1} I_t.
\end{align}
So, by Proposition \ref{prop:IndependenceAndDistributionItAndCtt}, we have $I$ expressed
as a sum of independent random variables $I_t$, $t=1,...,n-1$, with explicit distribution 
(\ref{eq:DistributionIt}). The analysis of $I$ (both for fixed and growing $k$) is based
upon this decomposition. 

\subsubsection{The Lindberg-Feller Central Limit Theorem}
\label{subsubsec:LindbergFellerCLT}

The $I_t$'s are independent, but not identically distributed. Our proof of the central limit
theorem for the number of inversion $I$ (both in the case of fixed and growing $k$) will
use the following general central limit theorem for sums of independent random variables. 
See, e.g., \cite{Durrett2010}. 

\begin{The}[Lindberg-Feller Central Limit Theorem]
\label{thm:LindbergFellerCLT}
Let $\Xnt$, $n \in \N$ and $1 \leq t \leq n$, be independent mean zero random variables such that:
\begin{itemize}
\item[(i)] For each $n \in \N$, $\sum_{t=1}^n \Var \left(\Xnt \right) = 1$.
\item[(ii)] For each $\epsilon > 0$, $\lim_{n \rightarrow \infty} \left\{ \sum_{t=1}^n \E \left( \left|\Xnt\right|^2 ; \left|\Xnt\right| > \epsilon \right) \right\} = 0$.  
\end{itemize}
Then $\sum_{t=1}^n \Xnt \equiv \Xn \stackrel{d.}{\longrightarrow} Z$, as $n \rightarrow \infty$, where $Z$ is a
standard normal random variable.
\end{The}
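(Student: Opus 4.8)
The plan is to prove this classical statement by the method of characteristic functions, together with L\'evy's continuity theorem; the argument is self-contained modulo standard facts. Write $\phi_{n,t}(\theta) = \E\!\left(e^{i\theta \Xnt}\right)$ and $v_{n,t} = \Var(\Xnt) = \E\!\left(|\Xnt|^2\right)$, the last equality by the mean-zero hypothesis, so that $\sum_{t=1}^n v_{n,t} = 1$ by (i). By independence the characteristic function of $\Xn$ is $\prod_{t=1}^n \phi_{n,t}(\theta)$, and it suffices to show $\prod_{t=1}^n \phi_{n,t}(\theta) \to e^{-\theta^2/2}$ for each fixed $\theta \in \R$.

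The first step is to extract from the Lindeberg condition (ii) the uniform smallness statement $\max_{1 \leq t \leq n} v_{n,t} \to 0$. Indeed, for any $\epsilon > 0$ and any $t$, splitting the second moment over $\{|\Xnt| \leq \epsilon\}$ and its complement gives $v_{n,t} \leq \epsilon^2 + \E\!\left(|\Xnt|^2; |\Xnt| > \epsilon\right) \leq \epsilon^2 + \sum_{s=1}^n \E\!\left(|X^{(n)}_s|^2; |X^{(n)}_s| > \epsilon\right)$; letting $n \to \infty$ and then $\epsilon \to 0$ yields the claim. Next I estimate $\phi_{n,t}$ via the elementary bound $\left|e^{ix} - 1 - ix + \tfrac{x^2}{2}\right| \leq \min\!\left(|x|^3, x^2\right)$, which with $\E(\Xnt) = 0$ gives $\left|\phi_{n,t}(\theta) - \left(1 - \tfrac{\theta^2}{2} v_{n,t}\right)\right| \leq \E\!\left(\min\!\left(|\theta\Xnt|^3, |\theta\Xnt|^2\right)\right)$. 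Splitting this expectation at $|\Xnt| = \epsilon$ bounds it by $|\theta|^3 \epsilon\, v_{n,t} + \theta^2\, \E\!\left(|\Xnt|^2; |\Xnt| > \epsilon\right)$, so that summing over $t$ and using $\sum_t v_{n,t} = 1$ and (ii) yields $\sum_{t=1}^n \left|\phi_{n,t}(\theta) - \left(1 - \tfrac{\theta^2}{2}v_{n,t}\right)\right| \leq |\theta|^3 \epsilon + o(1)$; since $\epsilon$ is arbitrary this sum tends to $0$.

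To finish I pass from the sum of differences to a difference of products. Using the standard lemma that $\left|\prod z_t - \prod w_t\right| \leq \sum |z_t - w_t|$ for complex numbers of modulus at most one, with $z_t = \phi_{n,t}(\theta)$ (modulus $\le 1$ as a characteristic function) and $w_t = 1 - \tfrac{\theta^2}{2}v_{n,t}$ (modulus $\le 1$ once $n$ is large, by the previous paragraph), we conclude that $\prod_t \phi_{n,t}(\theta)$ and $\prod_t \left(1 - \tfrac{\theta^2}{2}v_{n,t}\right)$ have the same limit. Finally $\log \prod_t \left(1 - \tfrac{\theta^2}{2}v_{n,t}\right) = \sum_t \left(-\tfrac{\theta^2}{2}v_{n,t} + O(v_{n,t}^2)\right) = -\tfrac{\theta^2}{2} + O\!\left(\max_t v_{n,t}\right) \to -\tfrac{\theta^2}{2}$, using $\sum_t v_{n,t}^2 \leq \max_t v_{n,t} \cdot \sum_t v_{n,t}$ and $\max_t v_{n,t} \to 0$. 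Hence $\prod_t \phi_{n,t}(\theta) \to e^{-\theta^2/2}$, the characteristic function of a standard normal, and L\'evy's continuity theorem gives $\Xn \stackrel{d.}{\longrightarrow} Z$.

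The only delicate points I anticipate are (a) deriving $\max_t v_{n,t} \to 0$ from (ii), which is short but essential since without it neither the bound $|w_t|\le 1$ nor the logarithm expansion is legitimate, and (b) the bookkeeping in the Taylor estimate — applying the $\min(|x|^3,x^2)$ bound on the correct regions so that the small-jump part sums to $|\theta|^3\epsilon$ and the large-jump part is exactly the Lindeberg sum. An alternative route, avoiding characteristic functions, is Lindeberg's replacement method: couple with independent Gaussians $Y^{(n)}_t$ of matching variances, telescope $\E f\!\left(\sum_t \Xnt\right) - \E f\!\left(\sum_t Y^{(n)}_t\right)$ over the $n$ swaps for $f \in C^3_b$, Taylor expand to third order, and control each term by the same Lindeberg split; this is equally clean and worth noting as a remark.
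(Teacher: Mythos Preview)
Your proof is correct and follows the standard characteristic-function route (Taylor expansion plus the product-comparison lemma, then the logarithm expansion using $\max_t v_{n,t}\to 0$). However, the paper does not prove this theorem at all: it is quoted as a classical tool with a reference to Durrett, so there is no ``paper's own proof'' to compare against. Your argument is exactly the textbook one such a citation points to, and the alternative Lindeberg swapping argument you mention is the other standard proof; either would serve as a self-contained justification if one were desired.
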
 

\subsubsection{Some Basic Estimates for Sums}
\label{subsubsec:BasicEstimates}

For calculation of the expectation and variance of the random variables $I_t$ 
we will need the following basic estimates for sums.

\begin{Lem}
\label{lem:BasicSumEstimates}
For positive integers $m,k$ 
\begin{align}
\label{eq:sum_tk_bounds}
\frac{m^{k+1}}{k+1} ~\leq~ \sum_{\tau = 1}^m \tau^k ~\leq~ \frac{(m+1)^{k+1}}{k+1}
\end{align}
and 
\begin{align}
\label{eq:sum_mmttk_bounds}
\frac{(m+1)^{k+2}}{(k+1)(k+2)} - \frac{4 (m+1)^{k+1}}{k+1} 
~\leq~ \sum_{\tau = 1}^m (m - \tau) \tau^k 
~\leq~ \frac{m^{k+2}}{(k+1)(k+2)} + \frac{2 m^{k+1}}{k+1} . \nonumber \\
\end{align}
\end{Lem}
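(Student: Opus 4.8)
To establish Lemma~\ref{lem:BasicSumEstimates}, the plan is to prove \eqref{eq:sum_tk_bounds} by a direct comparison of the sum with integrals, and then to derive \eqref{eq:sum_mmttk_bounds} by rewriting the second sum as an iterated sum and applying \eqref{eq:sum_tk_bounds} to it twice.

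For \eqref{eq:sum_tk_bounds} I would use that $x \mapsto x^k$ is nondecreasing on $[0,\infty)$, so that for each integer $\tau \geq 1$,
\begin{align*}
\int_{\tau-1}^{\tau} x^k\,dx \;\leq\; \tau^k \;\leq\; \int_{\tau}^{\tau+1} x^k\,dx .
\end{align*}
Summing over $\tau = 1,\dots,m$ and evaluating $\int_0^m x^k\,dx = m^{k+1}/(k+1)$ and $\int_1^{m+1} x^k\,dx = \big((m+1)^{k+1}-1\big)/(k+1) \leq (m+1)^{k+1}/(k+1)$ then yields both inequalities at once.

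For \eqref{eq:sum_mmttk_bounds} I would first write $m - \tau = \#\{s \in \Z : \tau < s \leq m\}$ and exchange the order of summation to get
\begin{align*}
\sum_{\tau=1}^m (m-\tau)\,\tau^k \;=\; \sum_{s=1}^m \sum_{\tau=1}^{s-1}\tau^k .
\end{align*}
For the lower bound, apply the left-hand inequality of \eqref{eq:sum_tk_bounds} (with $m$ replaced by $s-1$) to each inner sum, and then the left-hand inequality once more (with $m \mapsto m-1$ and $k \mapsto k+1$) to the outer sum, obtaining $\sum_{s=1}^m\sum_{\tau=1}^{s-1} \tau^k \geq \frac{1}{k+1}\sum_{j=1}^{m-1} j^{k+1} \geq \frac{(m-1)^{k+2}}{(k+1)(k+2)}$. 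For the upper bound, I would peel off the top term of the inner sum,
\begin{align*}
\sum_{\tau=1}^{s-1}\tau^k \;\leq\; (s-1)^k + \sum_{\tau=1}^{s-2}\tau^k \;\leq\; (s-1)^k + \frac{(s-1)^{k+1}}{k+1} ,
\end{align*}
sum over $s$, and apply the right-hand inequality of \eqref{eq:sum_tk_bounds} again (with $m \mapsto m-1$) to the two resulting sums, obtaining $\sum_{s=1}^m\sum_{\tau=1}^{s-1}\tau^k \leq \frac{m^{k+2}}{(k+1)(k+2)} + \frac{m^{k+1}}{k+1}$; this is stronger than the claimed upper bound since $\frac{1}{k+1} \leq \frac{2}{k+1}$. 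Finally, to cast the lower bound in the stated form I would use the one-line estimate $(m+1)^{k+2} - (m-1)^{k+2} = \int_{m-1}^{m+1}(k+2)x^{k+1}\,dx \leq 2(k+2)(m+1)^{k+1}$, which after dividing by $(k+1)(k+2)$ gives
\begin{align*}
\frac{(m-1)^{k+2}}{(k+1)(k+2)} \;\geq\; \frac{(m+1)^{k+2}}{(k+1)(k+2)} - \frac{2(m+1)^{k+1}}{k+1} \;\geq\; \frac{(m+1)^{k+2}}{(k+1)(k+2)} - \frac{4(m+1)^{k+1}}{k+1} .
\end{align*}
The edge cases ($m = 1$ and empty inner sums) are handled automatically by the conventions that empty sums vanish and $0^k = 0$.

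The one point needing a little care --- the ``main obstacle'', such as it is --- is the estimation of the inner sum in the upper bound: the crude bound $\sum_{\tau=1}^{s-1}\tau^k \leq s^{k+1}/(k+1)$ only leads to $\sum_{s=1}^m\sum_{\tau=1}^{s-1}\tau^k \leq (m+1)^{k+2}/((k+1)(k+2))$, and $(m+1)^{k+2}$ can exceed $m^{k+2}$ by a factor that grows with $k$, so this loses too much. Peeling off the single top term $(s-1)^k$ before applying \eqref{eq:sum_tk_bounds} is what recovers the needed precision; everything else is a routine integral comparison.
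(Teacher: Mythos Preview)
Your argument is correct, but the route you take for \eqref{eq:sum_mmttk_bounds} is genuinely different from the paper's. The paper treats $f(x)=(m-x)x^k$ directly as a unimodal function: it locates the maximum at $x=mk/(k+1)$, bounds $f_{\max}\le m^{k+1}/(k+1)$, splits the sum at $\lfloor mk/(k+1)\rfloor$ and $\lceil mk/(k+1)\rceil$, and applies the integral comparison separately on the increasing and decreasing pieces; the error terms are then a few multiples of $f_{\max}$. You instead linearize via $\sum_{\tau}(m-\tau)\tau^k=\sum_{s}\sum_{\tau<s}\tau^k$ and apply \eqref{eq:sum_tk_bounds} twice, which sidesteps any analysis of where $f$ peaks. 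Your approach is a bit slicker and in fact yields the sharper constants $1$ and $2$ in place of the paper's $2$ and $4$; the only extra work is the short detour converting the clean $(m-1)^{k+2}$ lower bound into the stated $(m+1)^{k+2}$ form. Either method is perfectly adequate here, since the lemma is only used to feed the $O(\cdot)$ estimates in Claims~\ref{cla:EItEstimate}--\ref{cla:VarItEstimate}.
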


\begin{Rem}
The constants 2 and 4 in (\ref{eq:sum_mmttk_bounds}) are likely not optimal
and are chosen only for convenience. 
\end{Rem}

\begin{proof}
If $a,b \in \Z$ with $a \leq b$ and $f : [a-1,b+1] \rightarrow \R$ is a continuous nondecreasing function, then
\begin{align}
\label{eq:fincreasing}
\int_{a-1}^b f(x) dx ~\leq~ \sum_{\tau = a}^b f(\tau) ~\leq~ \int_a^{b+1} f(x) dx.
\end{align}
Similarly, if $a,b \in \Z$ with $a \leq b$ and $f : [a-1,b+1] \rightarrow \R$ is a continuous nonincreasing function, then
\begin{align}
\label{eq:fdecreasing}
\int_{a-1}^b f(x) dx ~\geq~ \sum_{\tau = a}^b f(\tau) ~\geq~ \int_a^{b+1} f(x) dx.
\end{align}

The first pair of inequalities (\ref{eq:sum_tk_bounds}) is immediate from (\ref{eq:fincreasing}) since the function $x^k$ is 
increasing on $[0,m+1]$. To prove the second pair of inequalities (\ref{eq:sum_mmttk_bounds}) note that the function
$f(x) = (m-x)x^k$ is increasing on $[0, m (\frac{k}{k+1})]$ and decreasing on $[m (\frac{k}{k+1}), m+1]$ with
$\max_{x \in [0,m+1]} f(x) \equiv f_{\max} = f \left(m (\frac{k}{k+1}) \right) \leq \frac{m^{k+1}}{k+1}$. Thus, letting 
$A = \lfloor m(\frac{k}{k+1}) \rfloor$, $B = \lceil m(\frac{k}{k+1}) \rceil$ and applying the inequalities 
(\ref{eq:fincreasing}) and (\ref{eq:fdecreasing}), we have
\begin{align*}
\sum_{\tau = 1}^m f(\tau)
& \leq \sum_{\tau = 1}^{A-1} f(\tau) ~+~ \sum_{\tau = B+1}^m f(\tau) ~+~ 2 \cdot f_{\max}\\
& \leq \int_1^A f(x) dx ~+~ \int_B^m f(x) dx ~+~ 2 \cdot f_{\max} \\
& \leq \int_1^m f(x) dx ~+~ 2 \cdot f_{\max} \\
& \leq \frac{m^{k+2}}{(k+1)(k+2)} ~+~ \frac{2 m^{k+1}}{k+1} 
\end{align*}
and
\begin{align*}
\sum_{\tau = 1}^m f(\tau)
& \geq \sum_{\tau = 1}^{A-1} f(\tau) ~+~ \sum_{\tau = B+1}^m f(\tau) \\
& \geq \int_0^{A-1} f(x) dx ~+~ \int_{B+1}^{m+1} f(x) dx \\
& = \int_0^{m+1} f(x) dx ~-~ \int_{A-1}^{B+1} f(x) dx \\
& \geq \int_0^{m+1} f(x) dx ~-~ 3 \cdot f_{\max}  \\
& \geq \frac{(m+1)^{k+2}}{(k+1)(k+2)} ~-~ \frac{4 (m+1)^{k+1}}{k+1} \\ 
\end{align*}
for all $m \geq 2$. In the case $m=1$, the inequalities (\ref{eq:sum_mmttk_bounds}) may be verified directly.
\end{proof} 

\subsection{Estimates for $\E_{n,k}(I_t)$ and $Var_{n,k}(I_t)$}
\label{subsec:EstimateEandVarIt}
For a random variable $X$ taking values in $\{0,1, ... , m\}$,
\begin{align}
\label{eq:EXandEX2formulas}
\E(X) = \sum_{j = 0}^{m-1} \P(X > j) ~\mbox{ and }~ \E(X^2) = \E(X) + \sum_{j = 0}^{m-1} 2j \cdot \P(X > j).
\end{align}
Using these formulas along with Proposition \ref{prop:IndependenceAndDistributionItAndCtt} and Lemma 
\ref{lem:BasicSumEstimates}, we now obtain estimates for $\E_{n,k}(I_t)$, $\E_{n,k}(I_t^2)$, and $\Var_{n,k}(I_t)$. 

\begin{Cla}
\label{cla:EItEstimate}
Uniformly in $1 \leq t \leq n-1$ and $k \in \N$, 
\begin{align}
\label{eq:EItEstimate}
\E_{n,k}(I_t) = \frac{n-t}{k + 1} + O(1).
\end{align}
\end{Cla}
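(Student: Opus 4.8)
The plan is to express $\E_{n,k}(I_t)$ through the tail‑sum formula and then invoke Lemma \ref{lem:BasicSumEstimates}. Since $I_t$ takes values in $\{0,1,\ldots,n-t\}$, the first identity in (\ref{eq:EXandEX2formulas}) together with the explicit tail (\ref{eq:DistributionIt}) gives
\[ \E_{n,k}(I_t)=\sum_{j=0}^{n-t-1}\left(\tfrac{n-t-j}{n-t+1}\right)^{k}. \]
Writing $m=n-t$ (a positive integer, since $1\le t\le n-1$) and substituting $\tau=m-j$ turns this into $\tfrac{1}{(m+1)^k}\sum_{\tau=1}^{m}\tau^k$, at which point the bounds (\ref{eq:sum_tk_bounds}) apply directly.

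For the upper bound, (\ref{eq:sum_tk_bounds}) immediately yields $\E_{n,k}(I_t)\le \tfrac{m+1}{k+1}\le \tfrac{n-t}{k+1}+1$. For the lower bound, (\ref{eq:sum_tk_bounds}) gives $\E_{n,k}(I_t)\ge \tfrac{m}{k+1}\bigl(\tfrac{m}{m+1}\bigr)^{k}$, and I would then apply Bernoulli's inequality $\bigl(1-\tfrac{1}{m+1}\bigr)^{k}\ge 1-\tfrac{k}{m+1}$ to obtain
\[ \E_{n,k}(I_t)\ \ge\ \frac{m}{k+1}-\frac{mk}{(k+1)(m+1)}\ \ge\ \frac{n-t}{k+1}-1. \]
Combining the two inequalities gives $\bigl|\E_{n,k}(I_t)-\tfrac{n-t}{k+1}\bigr|\le 1$ for all $1\le t\le n-1$ and all $k\in\N$, which is the asserted estimate with an absolute $O(1)$ constant.

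There is no serious obstacle here; the one point that requires attention is that the error term must be $O(1)$ \emph{uniformly in $k$} as well as in $t$, so in the lower bound the factor $(m/(m+1))^{k}$ cannot simply be discarded. It is precisely the cancellation $\tfrac{mk}{(k+1)(m+1)}\le \tfrac{k}{k+1}<1$ coming out of Bernoulli's inequality that keeps the correction bounded by an absolute constant. It is also worth checking the degenerate case $m=1$ (that is, $t=n-1$): this is covered, since Lemma \ref{lem:BasicSumEstimates} is stated for all positive integers $m$, and one may alternatively verify it by hand from $\E_{n,k}(I_{n-1})=2^{-k}$.
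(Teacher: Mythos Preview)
Your proof is correct and essentially identical to the paper's: both compute $\E_{n,k}(I_t)=(n-t+1)^{-k}\sum_{\tau=1}^{n-t}\tau^k$ via the tail-sum formula, apply the bounds (\ref{eq:sum_tk_bounds}), and for the lower bound use the inequality $(1-\tfrac{1}{m+1})^{k}\ge 1-\tfrac{k}{m+1}$ (the paper phrases it as $\prod(1-x_i)\ge 1-\sum x_i$, which is the same thing here) followed by $\tfrac{mk}{(k+1)(m+1)}<1$. The only cosmetic difference is that the paper records the slightly sharper upper constant $\tfrac{1}{k+1}$ rather than $1$.
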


\begin{proof}
By (\ref{eq:DistributionIt}) and (\ref{eq:EXandEX2formulas}),
\begin{align*}
\E_{n,k} (I_t)  
=  \sum_{j=0}^{n-t-1} \left( \frac{n-t-j}{n-t+1} \right)^{k} 
= \frac{\sum_{\tau=1}^{n-t} \tau^{k} }{(n-t+1)^{k}}.
\end{align*}
Applying (\ref{eq:sum_tk_bounds}) gives
\begin{align}
\label{eq:EItUpperBound}
\E_{n,k} (I_t)  \leq \frac{(n-t+1)^{k+1}/(k+1)}{(n-t+1)^k} = \frac{n-t}{k+1} + \frac{1}{k+1}
\end{align} 
and
\begin{align*}
\E_{n,k} (I_t)  
& \geq \frac{(n-t)^{k+1}/(k+1)}{(n-t+1)^k} \\
& = \frac{n-t}{k+1} + \frac{n-t}{k+1} \left( \left(1 - \frac{1}{n-t+1}\right)^k - 1 \right) \\
& \geq \frac{n-t}{k+1} + \frac{n-t}{k+1} \left( \left(1 - \frac{k}{n-t+1}\right) - 1\right) \\
& \geq \frac{n-t}{k+1} - 1.
\end{align*} 
In the second to last inequality we use the fact that $\prod_{i=1}^k (1 - x_i) \geq 1 - \sum_{i=1}^k x_i$, 
for real numbers $x_1,...,x_k \in [0,1] $.
\end{proof}

\begin{Cla}
\label{cla:EIt2Estimate}
Uniformly in $1 \leq t \leq n-1$ and $k \in \N$, 
\begin{align}
\label{eq:EIt2Estimate}
\E_{n,k}(I_t^2) = \frac{2(n-t)^2}{(k + 1)(k+2)} + \frac{O(n)}{k}.
\end{align}
\end{Cla}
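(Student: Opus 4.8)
The plan is to mimic the proof of Claim \ref{cla:EItEstimate}, using the second formula in (\ref{eq:EXandEX2formulas}) together with the explicit tail distribution (\ref{eq:DistributionIt}). First I would write, with $m = n-t$,
\begin{align*}
\E_{n,k}(I_t^2) = \E_{n,k}(I_t) + \sum_{j=0}^{m-1} 2j \left(\frac{m-j}{m+1}\right)^k = \E_{n,k}(I_t) + \frac{2}{(m+1)^k}\sum_{j=0}^{m-1} j\,(m-j)^k.
\end{align*}
Reindexing the sum via $\tau = m - j$ turns $\sum_{j=0}^{m-1} j(m-j)^k$ into $\sum_{\tau=1}^{m}(m-\tau)\tau^k$, which is exactly the quantity bounded two-sidedly by (\ref{eq:sum_mmttk_bounds}) in Lemma \ref{lem:BasicSumEstimates}. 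So the leading term of the sum is $\frac{m^{k+2}}{(k+1)(k+2)}$ with error $O(m^{k+1}/(k+1)) = O(m^{k+1})/k$ on each side (after absorbing constants).

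Next I would divide through by $(m+1)^k$. The main term becomes $\frac{2}{(m+1)^k}\cdot\frac{m^{k+2}}{(k+1)(k+2)}$, and I would need to replace $m^{k+2}/(m+1)^k$ by $m^2$ up to acceptable error: writing $m^{k+2}/(m+1)^k = m^2(1 - \tfrac{1}{m+1})^k$ and using $1 - \tfrac{k}{m+1} \le (1-\tfrac{1}{m+1})^k \le 1$ (the same elementary inequality $\prod(1-x_i)\ge 1-\sum x_i$ invoked in Claim \ref{cla:EItEstimate}) shows the discrepancy is at most $m^2\cdot\tfrac{k}{m+1} = O(m) \le O(n)$, and dividing by $(k+1)(k+2)$ makes this $O(n)/k^2$, which is absorbed into $O(n)/k$. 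The error terms from (\ref{eq:sum_mmttk_bounds}), once divided by $(m+1)^k$, contribute $O(m^{k+1})/((m+1)^k k) = O(m)/k = O(n)/k$. Finally $\E_{n,k}(I_t) = O(n)$ from Claim \ref{cla:EItEstimate} (indeed $= O(n)/k \cdot k$, but crudely $O(n)$ suffices only if we want $O(n)$; since the target error is $O(n)/k$ I would instead use the sharper $\E_{n,k}(I_t) = \tfrac{m}{k+1} + O(1) = O(n)/k + O(1)$, and note $O(1)$ is swallowed by $O(n)/k$ as long as $k = o(n)$, which holds in all regimes of interest — or simply observe $\tfrac{m}{k+1}\le \tfrac{n}{k}$ and $O(1) \le O(n)/k$ trivially when $k \le n$).

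The only mildly delicate point — the main obstacle, such as it is — is bookkeeping the various error terms so that everything genuinely collapses into the single stated form $O(n)/k$ uniformly in both $t$ and $k$: one must check that the $O(m^{k+1}/(k+1))$ from the lemma, after division by $(m+1)^k \asymp m^k$, really is $O(n/k)$ and not something like $O(n)$, and that the $m^2 \cdot k/(m+1) \cdot \frac{1}{(k+1)(k+2)}$ gap term is small. Both are straightforward once one is careful that $m^{k+1}/(m+1)^k \le m$ and $m^{k+1}/(m+1)^k \ge m(1-\tfrac{1}{m+1})^k \ge m - k \ge m - n$, etc. No new ideas beyond those already used for Claim \ref{cla:EItEstimate} are required; it is the same argument applied to the second moment.
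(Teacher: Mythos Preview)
Your approach is essentially the paper's: start from (\ref{eq:EXandEX2formulas}), reindex to $\sum_{\tau=1}^{m}(m-\tau)\tau^k$, and apply (\ref{eq:sum_mmttk_bounds}). Two small points of bookkeeping are worth tightening. First, the lemma bounds are deliberately asymmetric --- the upper bound has $m^{k+2}$ while the lower bound has $(m+1)^{k+2}$ --- so after dividing by $(m+1)^k$ the main term is sandwiched between $\frac{2m^2}{(k+1)(k+2)}\cdot\frac{m^k}{(m+1)^k}\le \frac{2m^2}{(k+1)(k+2)}$ and $\frac{2(m+1)^2}{(k+1)(k+2)}\ge \frac{2m^2}{(k+1)(k+2)}$ directly, and you never need the product inequality $(1-\tfrac{1}{m+1})^k \ge 1-\tfrac{k}{m+1}$ at all. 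Second, your handling of the $\E_{n,k}(I_t)$ contribution is not quite uniform in $k$: the statement requires uniformity over all $k\in\N$, including $k>n$, where $O(1)$ is \emph{not} $O(n)/k$; the clean fix (which the paper uses) is to bound $0\le \E_{n,k}(I_t)\le \tfrac{m+1}{k+1}$ directly from (\ref{eq:EItUpperBound}), which is $O(n)/k$ for every $k$.
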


\begin{proof}
By (\ref{eq:DistributionIt}) and (\ref{eq:EXandEX2formulas}), 
\begin{align*}
\E_{n,k} \left(I_t^2 \right)
= \E_{n,k} (I_t) + \sum_{j=0}^{n-t-1} 2j \left( \frac{n-t-j}{n-t+1} \right)^{k} ~,
\end{align*}
and, by (\ref{eq:sum_mmttk_bounds}), we have the estimates
\begin{align*}
\sum_{j=0}^{n-t-1} 2j \left( \frac{n-t-j}{n-t+1} \right)^{k}
& = \frac{2}{(n-t+1)^{k}} \cdot \sum_{\tau=1}^{n-t} ( (n-t) - \tau )\tau^{k} \\
& \leq \frac{2}{(n-t+1)^{k}} \cdot \left[ \frac{(n-t)^{k + 2}}{(k+1)(k+2)} + \frac{2(n-t)^{k+1}}{k+1} \right] \\
& \leq \frac{2(n-t)^2}{(k+1)(k+2)} + \frac{4(n-t)}{k+1} ~,
\end{align*}
\begin{align*}
\sum_{j=0}^{n-t-1} 2j \left( \frac{n-t-j}{n-t+1} \right)^{k}
& = \frac{2}{(n-t+1)^{k}} \cdot \sum_{\tau=1}^{n-t} ( (n-t) - \tau )\tau^{k} \\
& \geq \frac{2}{(n-t+1)^{k}} \cdot \left[ \frac{(n-t+1)^{k + 2}}{(k+1)(k+2)} - \frac{4(n-t+1)^{k+1}}{k+1} \right] \\
& \geq \frac{2(n-t)^2}{(k+1)(k+2)} - \frac{8(n-t+1)}{k+1}.
\end{align*}
Using (\ref{eq:EItUpperBound}) and the fact that $I_t$ is nonnegative gives
\begin{align*}
\frac{2(n-t)^2}{(k+1)(k+2)} - \frac{8n}{k} \leq \E_{n,k} (I_t^2) \leq \frac{2(n-t)^2}{(k+1)(k+2)} + \frac{6n}{k}.
\end{align*} 
\end{proof}
 
\begin{Cla}
\label{cla:VarItEstimate}
Uniformly in $1 \leq t \leq n-1$ and $k \in \N$, 
\begin{align}
\label{eq:VarItEstimate}
\Var_{n,k}(I_t) = \frac{k(n-t)^2}{(k+1)^2(k+2)} + \frac{O(n)}{k} + O(1).
\end{align}
\end{Cla}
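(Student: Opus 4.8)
The plan is to combine the two preceding claims through the identity $\Var_{n,k}(I_t) = \E_{n,k}(I_t^2) - (\E_{n,k}(I_t))^2$. First I would square the estimate of Claim \ref{cla:EItEstimate}. That claim gives $\E_{n,k}(I_t) = \frac{n-t}{k+1} + O(1)$, and in fact its proof shows the stronger two-sided bound $\frac{n-t}{k+1} - 1 \leq \E_{n,k}(I_t) \leq \frac{n-t}{k+1} + \frac{1}{k+1}$, so the $O(1)$ term is bounded by $1$ in absolute value uniformly in $t$ and $k$. Squaring then yields
\[
(\E_{n,k}(I_t))^2 = \frac{(n-t)^2}{(k+1)^2} + \frac{O(n)}{k} + O(1),
\]
where the cross term $2\cdot\frac{n-t}{k+1}\cdot O(1)$ is absorbed into $\frac{O(n)}{k}$ using $\frac{n-t}{k+1} \leq \frac{n}{k}$, and the square of the $O(1)$ term contributes only $O(1)$.

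Next I would subtract this from the estimate of Claim \ref{cla:EIt2Estimate}, namely $\E_{n,k}(I_t^2) = \frac{2(n-t)^2}{(k+1)(k+2)} + \frac{O(n)}{k}$, to obtain
\[
\Var_{n,k}(I_t) = \left(\frac{2}{(k+1)(k+2)} - \frac{1}{(k+1)^2}\right)(n-t)^2 + \frac{O(n)}{k} + O(1).
\]
The final step is the elementary simplification
\[
\frac{2}{(k+1)(k+2)} - \frac{1}{(k+1)^2} = \frac{2(k+1) - (k+2)}{(k+1)^2(k+2)} = \frac{k}{(k+1)^2(k+2)},
\]
which produces exactly the claimed expression \eqref{eq:VarItEstimate}.

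There is essentially no obstacle here; this is a short computation chaining together Claims \ref{cla:EItEstimate} and \ref{cla:EIt2Estimate}. The only point requiring a moment's care is the bookkeeping of error terms — in particular, verifying that squaring $\E_{n,k}(I_t)$ produces a cross term of order only $\frac{O(n)}{k}$ rather than $O(n)$, which rests on the fact that the $O(1)$ error in Claim \ref{cla:EItEstimate} is a genuine absolute constant together with $\frac{n-t}{k+1} \leq \frac{n}{k}$ — and confirming that every bound is uniform in both $t \in \{1,\dots,n-1\}$ and $k \in \N$, which it is since the inputs from the two claims are themselves uniform.
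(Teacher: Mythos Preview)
Your proposal is correct and follows exactly the paper's approach: subtract the square of the expectation estimate (Claim~\ref{cla:EItEstimate}) from the second-moment estimate (Claim~\ref{cla:EIt2Estimate}) and simplify the leading coefficient. Your bookkeeping of the error terms is in fact more explicit than the paper's, which compresses the same computation into two displayed lines.
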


\begin{proof}
By Claims \ref{cla:EItEstimate} and \ref {cla:EIt2Estimate},
\begin{align*}
\Var_{n,k}(I_t) 
& = \left( \frac{2(n-t)^2}{(k + 1)(k+2)} + \frac{O(n)}{k} \right)  - \left( \frac{n-t}{k + 1} + O(1) \right)^2  \nonumber \\
& = \frac{k(n-t)^2}{(k+1)^2 (k+2)} + \frac{O(n)}{k} + O(1). 
\end{align*}
\end{proof} 

\subsection{The Case of Fixed $k$}
\label{subsec:FixedkCase}

Throughout Section \ref{subsec:FixedkCase} we assume $k \in \N$ is a fixed positive integer. 

\begin{proof}[Proof of Proposition \ref{prop:ExpAndVarI_Fixedk}]
By Claim \ref{cla:EItEstimate} and linearity of expectation
\begin{align*}
\E_{n,k}(I)
= \sum_{t=1}^{n-1} \left( \frac{n-t}{k+1} ~+~ O(1) \right) 
= \frac{n^2}{2(k+1)} ~+~ O(n).
\end{align*}
By Claim \ref{cla:VarItEstimate} and independence of the $I_t$'s
\begin{align*}
\Var_{n,k}(I)
& = \sum_{t=1}^{n-1} \left( \frac{k (n-t)^2}{(k+1)^2 (k+2)} ~+~ \frac{O(n)}{k} ~+~ O(1) \right) \\
& = \frac{k n^3}{3(k+1)^2 (k+2)} ~+~ O(n^2).
\end{align*}
\end{proof}

\begin{proof}[Proof of Theorem \ref{thm:WeakLawFixedk}]
By Proposition \ref{prop:ExpAndVarI_Fixedk} we have $\E_{n,k}(I/n^2) \rightarrow a_k$ and 
$\Var_{n,k}(I/n^2) \rightarrow 0$, as $n \rightarrow \infty$. Thus, the theorem follows by Chebyshev's inequality.  
\end{proof}

\begin{proof}[Proof of Theorem \ref{thm:CentralLimitTheoremFixedk}]

Run the $k$-card-minimum procedure independently on decks of each size $n = 1,2,...$ with given $k$. Denote by $\Int$
the random variable $I_t$ for the $n$ card deck, and by $\In$ the random variable $I$ for the $n$ card deck.  Also, 
denote the probability measure for this joint process simply by $\P$ and expectations and variances under
this measure simply by $\E(\cdot)$ and $\Var(\cdot)$. 

For $n \in \N$ and $1 \leq t \leq n$ define random variables $\Xnt$ by
\begin{align}
\label{eq:DefXnt}
\Xnt = \frac{ \Int - \E\left(\Int\right) }{ \sqrt{ \Var(\In) }}.
\end{align}
Note that the $\Xnt$'s are independent with zero mean, and moreover the following both hold. 
\begin{enumerate}
\item[(i)] For each $n$, $\sum_{t=1}^n \Var(\Xnt ) = \frac{1}{\Var(\In)} \cdot \sum_{t=1}^{n} \Var(\Int) = 1$, 
since $I_n^{(n)} \equiv 0$, and $I_1^{(n)},...,I_{n-1}^{(n)}$ are independent. 
\item[(ii)] With probability 1, uniformly in $t$, 
\begin{align*}
| \Xnt | \leq \frac{n}{\left(b_k n^3 + O(n^2)\right)^{1/2}} = O(1/n^{1/2}).
\end{align*}
So, for any $\epsilon > 0$, 
\begin{align*}
\lim_{n \to \infty}  \sum_{t=1}^n \E\left( \left| \Xnt \right|^2 ; \left| \Xnt \right| > \epsilon \right) = 0.
\end{align*}
\end{enumerate}
Thus, we may apply Theorem \ref{thm:LindbergFellerCLT} to conclude that
\begin{align*}
\frac{\In - \E \left( \In \right)}{\sqrt{\Var(\In)}} = \sum_{t=1}^n \Xnt \stackrel{d.}{\longrightarrow} Z, \mbox{ as } n \rightarrow \infty
\end{align*}
proving (\ref{eq:CentralLimitTheoremFixedk_1}). (\ref{eq:CentralLimitTheoremFixedk_2}) follows since 
\begin{align*}
\lim_{n \to \infty} \frac{\E \left( \In \right) - a_k n^2 }{\sqrt{\Var(\In)}} = 0 
~~\mbox{ and }~ \lim_{n \to \infty} \frac{ \sqrt{b_k} \cdot n^{3/2}}{ {\sqrt{\Var(\In)}} } = 1.
\end{align*}
\end{proof}

\subsection{The Case of Growing $k$}
\label{subsec:GrowingkCase}

Throughout Section \ref{subsec:GrowingkCase} we assume that $(k_n)_{n=1}^{\infty}$ is a nondecreasing sequence 
of positive integers such that $k_n \rightarrow \infty$ with $k_n = o(n)$. We will give proofs of Proposition 
\ref{prop:ExpAndVarI_Growingk} and Theorems \ref{thm:WeakLawGrowingk} and \ref{thm:CentralLimitTheoremGrowingk}.
The methods are very similar to those used in the previous section to establish the corresponding results 
in the case of fixed $k$. 

\begin{proof}[Proof of Proposition \ref{prop:ExpAndVarI_Growingk}] 
By Claim \ref{cla:EItEstimate} and linearity of expectation
\begin{align*}
\E_{n,k_n}(I)
= \sum_{t=1}^{n-1} \left( \frac{n-t}{k_n+1} ~+~ O(1) \right) 
= \frac{1}{2} \cdot \frac{n^2}{k_n} ~+~ o\left( \frac{n^2}{k_n}\right).
\end{align*}
By Claim \ref{cla:VarItEstimate} and independence of the $I_t$'s
\begin{align*}
\Var_{n,k_n}(I)
& = \sum_{t=1}^{n-1} \left( \frac{k_n (n-t)^2}{(k_n+1)^2 (k_n+2)} ~+~ \frac{O(n)}{k_n} ~+~ O(1) \right) \\
& = \frac{1}{3} \cdot \frac{n^3}{k_n^2} ~+~ o\left( \frac{n^3}{k_n^2}\right).
\end{align*}
\end{proof}

\begin{proof}[Proof of Theorem \ref{thm:WeakLawGrowingk}]
By Proposition \ref{prop:ExpAndVarI_Growingk} we have $\E_{n,k_n}\left(I \cdot \frac{k_n}{n^2}\right) \rightarrow 1/2$ and \\
$\Var_{n,k_n} \left(I \cdot \frac{k_n}{n^2}\right) \rightarrow 0$, as $n \rightarrow \infty$. Thus, the theorem follows by Chebyshev's inequality. 
\end{proof}

\begin{proof}[Proof of Theorem \ref{thm:CentralLimitTheoremGrowingk}]

Run the $k$-card-minimum procedure independently on decks of each size $n = 1,2,...$ with $k = k_n$ on the $n$ 
card deck. As in the proof of Theorem  \ref{thm:CentralLimitTheoremFixedk}, let $\In$ and $\Int$ be the random variables 
$I$ and $I_t$ for the $n$ card deck, and define $\Xnt$ by (\ref{eq:DefXnt}). Also, denote the probability measure for this joint process 
by $\P$ and expectations and variances under this measure by $\E(\cdot)$ and $\Var(\cdot)$. 

Again, the $\Xnt$'s are independent with zero mean and satisfy $\sum_{t=1}^n \Var(\Xnt) = 1$, for each $n$. 
So, the theorem will follow from Theorem \ref{thm:LindbergFellerCLT} if we can show that
\begin{align*}
\lim_{n \to \infty} \sum_{t=1}^n \E\left( \left| \Xnt \right|^2 ; \left| \Xnt \right| > \epsilon \right) = 0 ~,~ \mbox{ for any } \epsilon > 0.
\end{align*}

Now, by Claim \ref{cla:EItEstimate} and Proposition \ref{prop:ExpAndVarI_Growingk}, 
$\frac{\E (\Int )}{\sqrt{\Var(\In)}} = O(1/n^{1/2})$, uniformly in $t = 1, ... , n$.  
So, for any fixed $\epsilon > 0$, we know that for all sufficiently large $n$
\begin{align}
\label{eq:prob_lessnegepsilon_iszero}
\P \left( \Xnt < - \epsilon \right) = \P \left( \frac{\Int - \E ( \Int )}{\sqrt{ \Var(\In)}} < - \epsilon \right) = 0 ~,
\end{align}
for each $t =1, ..., n$. 

On the other hand, by Proposition \ref{prop:ExpAndVarI_Growingk} we also have the following upper bound 
on $\P \left( \Xnt > \epsilon \right)$ for all sufficiently large $n$ and each $1 \leq t \leq n$: 
\begin{align*}
\P \left( \Xnt > \epsilon \right) 
\leq \P \left( \frac{\Int}{\sqrt{\Var(\In)}} > \epsilon \right)
\leq \P \left( \Int > \epsilon \cdot \sqrt{n^3/(4k_n^2)} \right).
\end{align*}
Since $\Int$ takes only integer values between $0$ and $n-t$, it follows that $\P \left( \Xnt > \epsilon \right)$
can be nonzero only if $k_n \geq \frac{1}{2} \epsilon n^{3/2}/(n-t)$. In this case, the probability may be upper 
bounded as follows using Proposition \ref{prop:IndependenceAndDistributionItAndCtt}, for all $n$ sufficiently 
large that $\epsilon n^{3/2}/(2k_n) \geq 1$:
\begin{align} 
\label{eq:prob_greaternegepsilon_bound}
\P \left( \Xnt > \epsilon \right)  
& \leq  \P \left( \Int > \epsilon n^{3/2}/(2 k_n) \right) \nonumber \\
& = \left[ \left. \left( (n-t) - \lfloor \epsilon n^{3/2} / (2 k_n) \rfloor \right) \right/ (n-t+1) \right]^{k_n} \nonumber \\
& \leq \left[ \left. \left( n - \epsilon n^{3/2} / (4 k_n) \right) \right/ n \right]^{k_n} \nonumber \\
& = \left(1 - \frac{ (\epsilon/4) n^{1/2}}{k_n} \right)^{k_n} \nonumber \\
& \leq e^{ - (\epsilon/4)n^{1/2} }. 
\end{align}

Combining (\ref{eq:prob_lessnegepsilon_iszero}) and (\ref{eq:prob_greaternegepsilon_bound}) shows that for all sufficiently large $n$
\begin{align*}
\P \left(|\Xnt| > \epsilon \right) \leq  e^{ - (\epsilon/4)n^{1/2}} ~,~ 1 \leq t \leq n.
\end{align*}
To complete the proof, we note that $\left| \Int - \E \left( \Int \right) \right|$ can be at most $n$, for any $t$. 
So, by Proposition \ref{prop:ExpAndVarI_Growingk}, with probability 1 for all sufficiently large $n$ 
\begin{align*}
|\Xnt| \leq \frac{n}{\sqrt{\Var(\In)}} \leq n^{1/2} ~,~1 \leq t \leq n.
\end{align*}
Hence,
\begin{align*}
\lim_{n \to \infty} \sum_{t = 1}^{n} \E\left( \left| \Xnt \right|^2 ; \left| \Xnt \right| > \epsilon \right)
\leq \lim_{n \to \infty} n \cdot \left[ e^{ - (\epsilon/4)n^{1/2} } \cdot \left(n^{1/2}\right)^2 \right] = 0.
\end{align*}
\end{proof}
 
\section{Analysis of Longest Increasing Subsequence}
\label{sec:AnalysisOfLIS}

In this section we establish the results of Section \ref{subsec:ScalingOfL} for the length of the longest 
increasing subsequence in a $k$CM random permutation. An outline of the steps is as follows.

\begin{itemize}
\item In Section \ref{subsec:UpperBoundOnL} we establish a (high probability) upper bound on $L$.
The general method of proof is to divide the time set $[n] = \{1,...,n\}$ into blocks $B_i$ in an appropriate way,
and use Markov's inequality to upper bound the probability of having too long an increasing subsequence
in any time block. 
\item In Section \ref{subsec:LowerBoundOnL} we establish a (high probability) lower bound on $L$. 
The proof method is constructive, showing that a particular type of long enough increasing subsequence
will occur with high probability. 
\item In Section \ref{subsec:VarianceEstimateForL} we establish the variance estimate of Proposition \ref{prop:VarL}
using the Efron-Stein Inequality. 
\item Finally, in Section \ref{subsec:ScalinOfLAndWeakLaw} we prove Theorems \ref{thm:ScalingOfL} 
and \ref{thm:WeakLawL}, using Proposition \ref{prop:VarL} and the estimates of Sections \ref{subsec:UpperBoundOnL} 
and \ref{subsec:LowerBoundOnL}.
\end{itemize}

\subsection{Upper Bound on $L$}
\label{subsec:UpperBoundOnL}

In this section we prove the following proposition. 

\begin{Prop}
\label{prop:UpperBoundL}
Let $(k_n)_{n=1}^{\infty}$ be a sequence of positive integers satisfying $k_n = o(n)$. Then, for any 
$n \in \N$ and $\epsilon > 0$ sufficiently small that $4e(1+2\epsilon)\sqrt{k_n n} \leq n$, 
\begin{align*}
\P_{n,k_n}\left(L > 4e(1+2\epsilon)\sqrt{k_n n}\right) \leq \left( \frac{\log \sqrt{n/(\epsilon^2 k_n)}}{\log 4} \right)
\cdot \left(\frac{1}{1+\epsilon}\right)^{8e \sqrt{\epsilon}(1+\epsilon) k_n^{3/4} n^{1/4}}.
\end{align*}
\end{Prop}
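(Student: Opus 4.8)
The plan is to combine a block decomposition in \emph{time} with a first-moment (Markov) estimate on the longest increasing subsequence inside each block. Fix $n$ and $\epsilon$ with $4e(1+2\epsilon)\sqrt{k_n n}\le n$, write $k=k_n$, and set $m^\ast = \epsilon\sqrt{k n}$. Split $[n]$ into consecutive blocks $B_1,\dots,B_r$ so that along $B_i$ the remaining deck size $|D_t|$ ranges over $(n/4^{i},\,n/4^{i-1}]$, continuing until $|D_t|$ first drops below $m^\ast$; then $r\approx\log_4(n/m^\ast)=\log_4\sqrt{n/(\epsilon^2 k)}$ and the cards removed after $B_r$ number at most $m^\ast=\epsilon\sqrt{kn}$. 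The restriction of any increasing subsequence of $\sigma$ to a time block is an increasing subsequence of that block, and the trailing cards contribute at most their number, so it suffices to prove that for each block $B_i$, with starting deck size $m_i$ and width $w_i\le m_i$,
\[
\P_{n,k}\bigl(L(B_i)\ge 2e(1+\epsilon)\sqrt{k\,m_i}\bigr)\ \le\ (1+\epsilon)^{-\,c\sqrt{k\,m_i}}
\]
for a suitable absolute constant $c>0$, where $L(B_i)$ is the length of the longest increasing subsequence among $(C_t)_{t\in B_i}$. Indeed, summing the thresholds over the geometric sequence of $m_i$'s gives $\sum_i 2e(1+\epsilon)\sqrt{k m_i}\le 4e(1+\epsilon)\sqrt{kn}$, which together with the trailing term $m^\ast$ and the union bound over the $r$ blocks (the exponent $c\sqrt{km_i}$ is smallest, hence worst, for the last block, where $m_i\approx m^\ast$ and $\sqrt{km^\ast}=\sqrt{\epsilon}\,k^{3/4}n^{1/4}$) yields exactly the asserted inequality.

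For a single block we first \emph{decouple}. Relabelling the cards of $D_t$ at the start of $B_i$ by their ranks turns $(C_t)_{t\in B_i}$ into the first $w_i$ steps of a fresh $k$CM procedure on a deck of size $m_i$, and preserves the relative order of the cards, hence $L(B_i)$. For this fresh procedure let $\Ct_t$ be the relative positions (independent, with the distribution of Proposition \ref{prop:IndependenceAndDistributionItAndCtt}) and set $\xi_t:=\Ct_t+t$. Tracking the rank of $C_{t_{j-1}}$ inside the successive decks $D_{t_{j-1}}\supseteq D_{t_j}$ shows that whenever $C_{t_1}<\cdots<C_{t_\ell}$ one has $\Ct_{t_j}>\Ct_{t_{j-1}}-(t_j-t_{j-1})$ for every $j$, i.e.\ $\xi_{t_1}<\cdots<\xi_{t_\ell}$. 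Thus every increasing subsequence of $(C_t)$ is an increasing subsequence of the \emph{independent} sequence $(\xi_t)$, so $L(B_i)\le L(\xi)$, and by Markov's inequality applied to the count $N_\ell$ of length-$\ell$ increasing subsequences of $(\xi_1,\dots,\xi_{w_i})$,
\[
\P\bigl(L(\xi)\ge\ell\bigr)\ \le\ \E[N_\ell]\ =\ \sum_{t_1<\cdots<t_\ell\le w_i}\P(\xi_{t_1}<\cdots<\xi_{t_\ell}).
\]

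The estimation of $\E[N_\ell]$ is the crux, and the difficulty is that $(\xi_t)$ is strongly biased toward increasing (the term $t$ is a deterministic drift), so a naive union bound over increasing subsequences is worthless. The remedy is to exploit the scale of the randomness: $\Ct_t$ has ``width'' of order $m_i/k$, so over a time window of that length the drift $t$ is comparable to the noise $\Ct_t$, and inside such a window $(\xi_t)$ behaves like an essentially unbiased permutation of $\sim m_i/k$ values, for which $\P(\xi_{t_1}<\cdots<\xi_{t_p})\le C^{p}/p!$ with an absolute constant $C$ (the heavy upper tail of $\Ct_t$ must also be controlled here). Since $[1,w_i]$ is covered by only $\approx k$ such windows, one decomposes each increasing subsequence according to the (at most $k$) windows its indices fall into, applies the windowed estimate $\binom{w'}{p}C^{p}/p!$ in each piece, and sums over the $\le 4^{k}$ choices of which windows are used, together with the compositions of $\ell$ among them. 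Carrying this out gives $\E[N_\ell]\le (1+\epsilon)^{-\Theta(\ell)}$ once $\ell\ge 2e(1+\epsilon)\sqrt{k m_i}$ — the threshold being where the forced per-window pieces each have length $\gtrsim\sqrt{C m_i/k}$ and so each contribute a factor $<(1+\epsilon)^{-2}$, which beats the $4^{k}$ and $r$ prefactors because $\sqrt{km_i}\gg k$. The hard part is thus twofold: proving the windowed $C^{p}/p!$ bound rigorously (including the tail of $\Ct_t$), and doing the combinatorial bookkeeping across windows with the correct constant $4e$; the block decomposition, the decoupling to $(\xi_t)$, and the final union bound are routine by comparison.
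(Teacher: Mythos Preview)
Your outer scaffolding---the $4$-adic time-block decomposition, the per-block threshold $2e(1+\epsilon)\sqrt{k\,m_i}$, the geometric sum of thresholds giving $4e(1+\epsilon)\sqrt{kn}$, the trailing $\epsilon\sqrt{kn}$ cards, and the union bound over $r\approx\log_4\sqrt{n/(\epsilon^2 k)}$ blocks with the worst exponent coming from the last block---matches the paper exactly. The divergence, and the gap, is in how you estimate the longest increasing subsequence inside a single block.

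Your route passes through $\xi_t=\widetilde C_t+t$ (a correct reduction), but then inherits a strong deterministic drift, which you propose to cancel by a further sub-partition into $\sim k$ windows of width $\sim m_i/k$, a claimed $C^p/p!$ bound inside each window, and combinatorial bookkeeping over $\le 4^{k}$ window-subsets and compositions of $\ell$. None of this is actually carried out, and recovering the precise constant $4e$ (not merely the order $\sqrt{k m_i}$) from this scheme is not clear: the windowed $C^p/p!$ estimate already requires a nontrivial argument about the distribution and tails of $\widetilde C_t$, and the $4^{k}$ prefactor has to be beaten by the per-window decay with the right constants tracked. What you label ``the hard part'' is essentially the whole proof.

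The paper bypasses all of this with a one-line pointwise bound you never invoke: for any card $j$ still in the deck at time $t$,
\[
\P_{n,k}(C_t=j\mid C_1,\dots,C_{t-1})\ \le\ \P\bigl(\exists\,i\le k:\ C_{t,i}=j\bigr)\ \le\ \frac{k}{n-t+1}\ \le\ \frac{k\,4^{i}}{n}\qquad(t\in B_i).
\]
Multiplying along any specific time-indexed increasing subsequence $s=((j_1,\dots,j_\ell),(t_1,\dots,t_\ell))$ with all $t_m\in B_i$ gives $\P(s\subset\sigma\mid\text{history at start of }B_i)\le(k\,4^{i}/n)^{\ell}$, and there are at most $\binom{\lfloor n/4^{i-1}\rfloor}{\ell}^{2}$ such $s$ (choose $\ell$ cards from the $\le n/4^{i-1}$ remaining, and $\ell$ times from the $\le n/4^{i-1}$ slots in $B_i$). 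Hence
\[
\E[N_{B_i,\ell}]\ \le\ \Bigl(\frac{k\,4^{i}}{n}\Bigr)^{\ell}\binom{\lfloor n/4^{i-1}\rfloor}{\ell}^{2}\ \le\ \Bigl(\frac{e^{2}\,k\,n}{4^{\,i-2}\,\ell^{2}}\Bigr)^{\ell},
\]
and Markov's inequality gives $\P(L_i\ge\ell_i)\le(1+\epsilon)^{-2\ell_i}$ for $\ell_i=\lceil 4e(1+\epsilon)\sqrt{kn/4^{i}}\rceil$ directly---no $\xi_t$, no windows, no drift to fight. The ``hard part'' you identify simply does not arise.
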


Before proceeding to the proof, however, we must first introduce a bit more terminology and notation.
We say $s = ( (j_1,...,j_{\ell}), (t_1,...,t_{\ell}))$ is a \emph{time-indexed increasing subsequence} of [n]
if $1 \leq j_1 < ... < j_{\ell} \leq n$ and $1 \leq t_1 < ... < t_{\ell} \leq n$. Also, we say that $s$ is \emph{contained in} 
the random permutation $\sigma$ (written $s \subset \sigma$) if $C_{t_1} = j_1,..., C_{t_{\ell}} = j_{\ell}$.  Finally,
for a subset of times $A \subset [n]$, we define $S_{A,\ell,n}$ to be the set of all length-${\ell}$ time-indexed 
increasing subsequences with times $t_i \subset A$, and $N_{A,\ell,n}$ to be the (random) number of these that 
occur in $\sigma$.
\begin{align*}
S_{A,\ell,n} & = \{ s = ((j_1,...,j_{\ell}),(t_1,...,t_{\ell})) : 1 \leq j_1 < ... < j_{\ell} \leq n, t_1 < ... < t_{\ell}, t_i \in A, \forall i \}, \\
N_{A,\ell,n} & = | \{s \in S_{A,\ell,n} : s \subset \sigma \}|.
\end{align*}

The structure of the proof is as follows. We divide the time set $[n]$ into blocks $B_i$ according to a 4-adic splitting,
estimate $\E_{n,k_n}(N_{B_i,\ell,n})$ for each block $B_i$, and then use this estimate and Markov's inequality to show that, 
with high probability, the length $L_i$ of the longest increasing subsequence in the $i$-th block cannot be too large.
Hence, $L \leq \sum_i L_i$ also is not too large, with high probability. The details are given below.
 
\begin{proof}[Proof of Proposition \ref{prop:UpperBoundL}]

Fix any $n \in \N$ and $\epsilon > 0$ sufficiently small that $4e(1+2\epsilon)\sqrt{k_n n} \leq n$.
Note that the condition on $\epsilon$ implies $\frac{\log \sqrt{n/(\epsilon^2 k_n)}}{\log 4} \geq 1$.
Define $i_0 \in \N$ and the time blocks $B_i$, $i = 1,...,i_0$, by 
\begin{align*}
i_0  = \floor{ \frac{\log \sqrt{n/(\epsilon^2 k_n)}}{\log 4} } \mbox{ and }
B_i = \{n - \floor{n/4^{i-1}} + 1,...,n - \floor{n/4^i} \} ~,
\end{align*}
so that $B_1,...,B_{i_0}$ form a partition of the time set $\{1, ... , n - \floor{n/4^{i_0}} \}$.

The first piece of the proof is to bound $\E_{n,k_n}(N_{B_i,\ell,n})$, for each $i = 1,...,i_0$, 
which we do through a series of three steps as follows. 
\begin{enumerate}
\item At time $t$ there are $n-t+1$ cards in $D_t$ left to pick from. So, by the union bound, for each card $j$ 
and any choices $c_1,...,c_{t-1}$ for the first $t-1$ cards such that $c_{\tau} \not= j$, $\tau = 1,...,t-1$, we have
\begin{align}
\label{eq:SingleCardPlacementBound}
\P_{n,k_n}&(C_t = j|C_1 = c_1, ..., C_{t-1} = c_{t-1}) \nonumber \\
& \leq \P_{n,k_n}(\exists~ 1 \leq m \leq k_n : C_{t,m} = j|C_1 = c_1, ..., C_{t-1} = c_{t-1}) \nonumber \\
& \leq k_n/(n-t+1).
\end{align}
Hence, for any choices $c_1,...,c_{n - \floor{n/4^{i-1}}}$ of the first $n - \floor{n/4^{i-1}}$ cards
removed before time $n - \floor{n/4^{i-1}} + 1 = \min \{t : t \in B_i\}$ and any $s = ((j_1,...,j_{\ell}),(t_1,...,t_{\ell})) 
\in S_{B_i,\ell,n}$ such that $j_1,...,j_{\ell} \not\in \{c_1,...,c_{n - \floor{n/4^{i-1}}}\}$, we have
\begin{align}
\label{eq:Psinsigma}
& \P_{n,k_n}(s \subset \sigma| C_1 = c_1, ... , C_{n - \floor{n/4^{i-1}}} = c_{n - \floor{n/4^{i-1}}}) \nonumber \\
&= \prod_{m=1}^{\ell} \P_{n,k_n}\left(C_{t_m} = j_m | C_1 = c_1, ... , C_{n - \floor{n/4^{i-1}}} = c_{n - \floor{n/4^{i-1}}}, C_{t_1} = j_1,...,C_{t_{m-1}}=j_{m-1}\right) \nonumber \\
& \leq \prod_{m=1}^{\ell} \frac{k_n}{n-t_m+1}
\leq \left(\frac{k_n}{\floor{n/4^i} + 1} \right)^{\ell}
\leq  \left(\frac{k_n 4^i}{n} \right)^{\ell}.
\end{align}
\item For any particular choices $c_1,...,c_{n - \floor{n/4^{i-1}}}$ of the first $n - \floor{n/4^{i-1}}$ cards,
\begin{align}
\label{eq:NumberOf_sinsigma}
\left|\left\{s \in S_{B_i,\ell,n} : j_1,...,j_{\ell} \in d \right\} \right| \leq { \floor{n/4^{i-1}} \choose \ell }^2
\end{align}
where 
\begin{align*}
d = [n]/\{c_1,...,c_{n - \floor{n/4^{i-1}}} \} 
\end{align*}
is the remaining set of cards at time $n - \floor{n/4^{i-1}} + 1$ . The first factor of ${ \floor{n/4^{i-1}} \choose \ell }$ 
comes from possible choices for the cards $j_1,...,j_{\ell}$, and the second factor of ${ \floor{n/4^{i-1}} \choose \ell }$, 
which is an over estimate, comes from possible choices for the times $t_1,...,t_{\ell}$. 
\item Combining the estimates (\ref{eq:Psinsigma}) and (\ref{eq:NumberOf_sinsigma}) shows that for any particular 
choices $c_1,...,c_{n - \floor{n/4^{i-1}}}$ for the first $n - \floor{n/4^{i-1}}$ cards
\begin{align*}
\E_{n,k_n}&(N_{B_i,\ell,n} | C_1 = c_1, ... , C_{n - \floor{n/4^{i-1}}} = c_{n - \floor{n/4^{i-1}}}) \nonumber \\
& = \sum_{\{s \in S_{B_i,\ell,n} : j_1,...,j_{\ell} \in d \}} \hspace{-8 mm}
\P_{n,k_n} \left(s \subset \sigma| C_1 = c_1, ... , C_{n - \floor{n/4^{i-1}}} = c_{n - \floor{n/4^{i-1}}}\right) \\
&\leq \left(\frac{k_n 4^i}{n} \right)^{\ell} \cdot { \floor{n/4^{i-1}} \choose \ell }^2
\leq \left( \frac{e^2 k_n n }{4^{i-2} \ell^2} \right)^{\ell}.
\end{align*}
Hence, the same estimate also holds non-conditionally:
\begin{align}
\label{eq:ENBiln}
\E_{n,k_n}(N_{B_i,\ell,n}) \leq \left( \frac{e^2 k_n n }{4^{i-2} \ell^2} \right)^{\ell}.
\end{align} 
\end{enumerate}
Now, let $\ell_i = \ceil{4e(1+\epsilon) \sqrt{k_n n/4^i}}$, and let $L_i$ be the length 
of the longest increasing subsequence for cards in the time block $B_i$: 
\begin{align*}
L_i = \max \{\ell : \exists~ t_1,...,t_{\ell} \in B_i \mbox{ with } t_1 < ... < t_{\ell} \mbox{ and } C_{t_1} < ... < C_{t_{\ell}} \}.
\end{align*}
Then, applying Markov's inequality to (\ref{eq:ENBiln}) gives
\begin{align*}
\P_{n,k_n}(L_i \geq \ell_i) 
= \P_{n,k_n}(N_{B_i,\ell_i,n} \geq 1)
\leq \left( \frac{e^2 k_n n }{4^{i-2} \ell_i^2} \right)^{\ell_i}
\leq \left(\frac{1}{1+\epsilon}\right)^{2 \ell_i}.
\end{align*}
Further, by the definition of $i_0$, we know that for each $i = 1,...,i_0$,
\begin{align*}
\ell_i \geq 4e(1+\epsilon) \sqrt{k_n n/4^{i_0}} \geq 4e(1+\epsilon) \sqrt{\epsilon} k_n^{3/4} n^{1/4}.
\end{align*}
Thus,
\begin{align*}
\P_{n,k_n}(\exists~ 1 \leq i \leq i_0 : L_i \geq \ell_i) \leq i_0 \cdot \left(\frac{1}{1+\epsilon}\right)^{8e(1+\epsilon) \sqrt{\epsilon} k_n^{3/4} n^{1/4} }.
\end{align*}
The claim follows, since on the event $\{L_i < \ell_i , i = 1,...,i_0 \}$, we have
\begin{align*}
L & \leq \sum_{i=1}^{i_0} (\ell_i - 1) ~+~ \floor{n/4^{i_0}} \\
& < \sum_{i=1}^{\infty} 4e(1+\epsilon) \sqrt{k_n n/4^i} ~~+~~ 4\epsilon \sqrt{k_n n} \\
& < 4e(1+2\epsilon) \sqrt{k_n n}.
\end{align*} 
\end{proof}

\begin{Rems}~
\begin{enumerate}
\item One may consider partitioning the time set $[n]$ with an $x$-adic splitting, for any $x > 1$, rather than specifically with the 4-adic 
splitting we use. That is, one may replace $4$ by $x$ in the definition of $B_i$. Doing this for general $x$, using estimates as above, 
gives an upper bound on $L$ of roughly $\frac{ex}{\sqrt{x} - 1} \cdot \sqrt{k_n n}$, up to $\epsilon$ corrections. This bound is minimized 
by taking $x = 4$.
\item A more straightforward approach would be to not partition the time set into blocks at all, and simply bound 
the expected total number of length-$\ell$ time-indexed increasing subsequences occurring in $\sigma$ by 
\begin{align*}
\E_{n,k_n}(N_{\ell,n}) \leq |S_{\ell,n}| \cdot \max_{s \in S_{\ell,n}} \P_{n,k_n}(s \subset \sigma)
\end{align*}
where $S_{\ell,n}$ is the set of all length-$\ell$ time-indexed increasing subsequences of $[n]$. However, this 
does not work as easily, because the bound (\ref{eq:SingleCardPlacementBound}) is not good if $t$ is too large,
and, therefore, obtaining a good bound on $\P_{n,k_n}(s \subset \sigma)$ for an arbitrary time-indexed increasing 
subsequence $s = ((j_1,...,j_{\ell}),(t_1,...,t_{\ell}))$, without any constraint on the times $t_1,...,t_{\ell}$, is more difficult. 
\end{enumerate}
\end{Rems}

\subsection{Lower Bound on $L$}
\label{subsec:LowerBoundOnL}

In this section we prove the following proposition. 

\begin{Prop}
\label{prop:LowerBoundL}
Let $(k_n)_{n=1}^{\infty}$ be a sequence of positive integers satisfying $k_n = o(n)$. 
Then, for any $0 < \epsilon < 1/2$ there exists $n_0 \in \N$ such that
\begin{align}
\label{eq:LowerBoundL}
\P_{n,k_n}\left(L < (1/2 - \epsilon) \sqrt{k_n n}\right) \leq \exp\left(-\frac{\epsilon^2}{4(1-\epsilon)} \cdot \sqrt{k_n n}\right)
\end{align}
for all $n \geq n_0$. 
\end{Prop}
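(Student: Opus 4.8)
The approach is constructive, as announced: I will exhibit a specific increasing subsequence of length at least $(1/2-\epsilon)\sqrt{k_n n}$ that occurs with the stated failure probability. The natural candidate exploits the fact that early in the process the deck is large and the minimum-of-$k$ bias is strong: by \propref{prop:IndependenceAndDistributionItAndCtt}, the relative position $\Ct_t$ selected at time $t$ satisfies $\P_{n,k_n}(\Ct_t > j) = ((n-t+1-j)/(n-t+1))^{k_n}$, so $C_t$ tends to be among the lowest $\approx (n-t+1)/k_n$ remaining cards. I would restrict attention to the first $m \approx c\sqrt{k_n n}$ time steps (with $c$ slightly below $1/2$), and track the actual card values $C_1, C_2, \dots, C_m$. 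The key point is that each $C_t$ is small — of order $t\cdot(n/k_n)/n \lesssim m/k_n = O(\sqrt{n/k_n})$, so much smaller than $n$ — and I want to show that the sequence $C_1,\dots,C_m$ contains a long increasing run, or more simply that $L \ge$ (length of longest increasing subsequence among the first $m$ picks).

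The cleanest route: since the relative positions $\Ct_1,\dots,\Ct_m$ are \emph{independent}, I can bound, for each $t\le m$, the probability that $C_t$ is ``large'' (say exceeds some threshold $M = \Theta(\sqrt{k_n n})$). Concretely $C_t \le \Ct_1 + \Ct_2 + \dots + \Ct_t$ is false in general — rather, $C_t$ equals the $\Ct_t$-th smallest of the $n-t+1$ cards not yet removed, so $C_t \le \Ct_t + (t-1)$ crudely, but better, $C_t$ is at most the $(\Ct_1+\cdots+\Ct_t)$-th integer, i.e. $C_t \le \sum_{\tau=1}^t \Ct_\tau$ only if all earlier picks were themselves ``low'', which they are with high probability. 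So I would first show: with probability at least $1 - \exp(-\Theta(\sqrt{k_n n}))$, \emph{every} $C_t$ for $t\le m$ satisfies $C_t \le M$ for a suitable $M = o(n)$; this follows from a union bound over $t\le m$ together with the tail $\P(\Ct_t > j)=((n-t+1-j)/(n-t+1))^{k_n} \le e^{-k_n j/(n-t+1)}$, choosing $j$ of order $(n/k_n)\log(\text{stuff})$ — actually one wants a Chernoff-type bound on the sum $\sum_{\tau\le t}\Ct_\tau$, whose mean is $\approx \sum_{\tau\le t}(n-\tau+1)/k_n \approx tn/k_n$, giving a typical value of $C_t$ around $mn/k_n \approx c n/\sqrt{k_n n}\cdot\sqrt{k_n n}/k_n$... the bookkeeping here needs care but is routine.

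Granted that all of $C_1,\dots,C_m$ lie in $\{1,\dots,M\}$ with $M$ comfortably less than $n$, I then invoke a pigeonhole/Erdős–Szekeres-type argument, or directly estimate the expected number of \emph{increasing subsequences} of length $\ell$ among these $m$ values using the explicit law of $\Ct_t$, and pick $\ell$ and $m$ so that a length-$\ell$ increasing subsequence exists with the claimed probability. The main obstacle — and where the exponent $\frac{\epsilon^2}{4(1-\epsilon)}$ will be extracted — is the concentration step controlling the partial sums $\sum_{\tau\le t}\Ct_\tau$: I expect this is handled by a Chernoff bound on these independent bounded random variables, with the factor $4(1-\epsilon)$ emerging from optimizing the bound over the threshold, and $\sqrt{k_n n}$ appearing because that is the scale of the sum. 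The condition $k_n = o(n)$ is used to guarantee $M = o(n)$, so the restriction to small cards is nonvacuous and the constructed subsequence genuinely has length $\approx \frac12\sqrt{k_n n}$ (matching the constant $1/2$ in \theref{thm:ScalingOfL}); and $n_0$ absorbs the lower-order corrections in the asymptotics of \claref{cla:EItEstimate}-type estimates.
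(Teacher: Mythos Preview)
Your plan has a genuine gap: restricting to the first $m\approx c\sqrt{k_n n}$ picks and showing they all lie in some small set $\{1,\dots,M\}$ does \emph{not} by itself produce a long increasing subsequence. Erd\H{o}s--Szekeres applied to $m$ values yields only a monotone subsequence of length $\sqrt{m}\approx (k_n n)^{1/4}$, which is the wrong order. Your fallback of ``directly estimating the expected number of increasing subsequences'' is not worked out, and a first- or second-moment argument here would at best give existence with probability bounded away from~$0$, not the exponential tail $\exp(-\Theta(\sqrt{k_n n}))$ in the statement; in any case it is unclear how the specific constant $1/2$ or the exponent $\epsilon^2/(4(1-\epsilon))$ would emerge from that route. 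The observation that the $C_t$'s are small is correct but insufficient: small values can still be wildly non-monotone.

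The paper's proof supplies the missing constructive idea. It does not restrict to an initial time window at all. Instead it defines a sequence of \emph{target sets} $S_1,S_2,\dots$, each consisting of the $\lceil\sqrt{n/k_n}\rceil$ lowest cards still available and still above the previous hit, and records the stopping times $T_m$ at which the process first lands in the current target set. By construction $C_{T_1}<C_{T_2}<\cdots<C_{T_M}$ is increasing. The point is that, conditioned on $C_t$ being above the last hit, the chance of landing in the next target set is at least $1-(1-\lceil\sqrt{n/k_n}\rceil/|D_t^+|)^{k_n}\ge (1-\epsilon)\sqrt{k_n/n}$, uniformly in the history. A simple deterministic count shows that either $M\ge(1/2-\epsilon)\sqrt{k_n n}$ automatically, or there were at least $\lceil n/2\rceil$ such attempts; in the latter case the number of hits dominates a $\mathrm{Bin}(\lceil n/2\rceil,(1-\epsilon)\sqrt{k_n/n})$ variable, and a standard Chernoff bound with deviation $x=\epsilon\sqrt{k_n/n}\lceil n/2\rceil$ gives exactly $\exp(-\epsilon^2\sqrt{k_n n}/(4(1-\epsilon)))$. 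That is where both the $1/2$ and the precise exponent come from --- they are not accessible from your bounding-the-values approach.
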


\begin{proof}
Throughout $0 < \epsilon < 1/2$ is fixed and $n_0 = n_0(\epsilon)$ is chosen sufficiently large that for all $n \geq n_0$,
\begin{align*}
\frac{n/2}{\ceil{\sqrt{n/k_n}}} - 1 \geq (1/2 - \epsilon) \sqrt{k_n n} ~\mbox{ and }~
e^{-\sqrt{k_n/n}} \leq1 - (1-\epsilon) \sqrt{k_n/n}.
\end{align*}
Our proof is based upon the constructive procedure given below. 
\begin{itemize} 
\item Let $T_0 = 0$ and $R_0^+ = [n]$.
\item Then, for $m = 1,2, ...$ :
\begin{itemize}
\item[*] Let $S_m$ be the set consisting of the lowest $\ceil{\sqrt{n/k_n}}$ cards in $R_{m-1}^+$.
\item[*] Let $T_m = \min\{t > T_{m-1} :C_t \in S_m\}$ be the first time some card in the next target set $S_m$ is picked. 
\item[*] Let $R_m^+ = \{j \in D_{T_m + 1}: j > C_{T_m}\}$ be the set of cards remaining in the deck after time $T_m$,
which are larger than card $C_{T_m}$.  
\end{itemize}
\item Continue in this fashion until the first time $m$, such that there are fewer than $\ceil{\sqrt{n/k_n}}$
cards in the remaining set $R_m^+$. That is, $R_m^+$, $S_m$, and $T_m$ are defined inductively by 
the above relations for $m = 1,...,M$ where 
\begin{align*}
M = \min \left\{m : |R_m^+| < \ceil{\sqrt{n/k_n}}\right\}. 
\end{align*}
\end{itemize} 

With this construction we have $C_{T_{m+1}} > C_{T_m}$, for each $1 \leq m < M$, so 
the random sequence $C_{T_1}, ... , C_{T_M}$ is an increasing subsequence in $\sigma$. 
Therefore, it suffices to show that for all $n \geq n_0$,
\begin{align}
\label{eq:ETS_MBig}
\P_{n,k_n}\left(M < (1/2 - \epsilon) \sqrt{k_n n}\right) \leq \exp\left(-\frac{\epsilon^2}{4(1-\epsilon)} \cdot \sqrt{k_n n}\right).
\end{align}
To this end, we define the following additional random variables.
\begin{itemize}
\item $1\leq A_m \leq \ceil{\sqrt{n/k_n}}$ is the relative position of card $C_{T_m}$ in the $m$-th target 
interval $S_m$, and $B_m$ is the number of cards greater than $C_{T_{m-1}}$ which 
are removed from the deck between times $T_{m-1}$ and $T_m$:
\begin{align*}
A_m = |\{j \in S_m : C_{T_m} \geq j \}| ~\mbox{ and }~ 
B_m = |\{T_{m-1} < t < T_m : C_t > C_{T_{m-1}} \}|
\end{align*}
where $C_{T_0} = C_0 \equiv 0$. 
\item $m(t)$ is the index of the most recent stopping time $T_m$, 
and $\TM$ is the set of times $t \leq T_M$ at which the random card chosen, $C_t$, 
is greater than the card chosen at the most recent stopping time:
\begin{align*}
m(t) = \max \{0 \leq m \leq M : T_m < t\} ~\mbox{ and } \TM = \{ t \leq T_M : C_t > C_{T_{m(t)}}\}.
\end{align*}
\item $N = |\TM|$ and, for $1 \leq i \leq N$, $t_i$ is the $i$-th lowest element of $\TM$.
\end{itemize} 
We observe that:
\begin{itemize}
\item[(i)] $\sum_{m=1}^M A_m \leq M \ceil{\sqrt{n/k_n}}$.
\item[(ii)] $\sum_{m=1}^M B_m = N - M \leq N - 1$.
\item[(iii)] $|R_m^+| = n - \sum_{i=1}^m A_m - \sum_{i=1}^m B_m$, for each $1 \leq m \leq M$.
So, in particular, $\sum_{m=1}^{M} A_m + \sum_{m=1}^M B_m > n - \ceil{\sqrt{n/k_n}}$.
\end{itemize}
Points (i) and (ii) follow directly from the definitions, and (iii) is easily shown by induction on $m$. 
Our proof is based on these simple facts and the following claim. \\ \\
\underline{Claim}: For $n \geq n_0$, 
\begin{align}
\label{eq:TheClaim}
\P_{n,k_n}\left(N \geq \ceil{n/2} , M < (1/2-\epsilon)\sqrt{k_n n}\right) 
\leq \exp\left(-\frac{\epsilon^2}{4(1-\epsilon)} \cdot \sqrt{k_n n}\right).
\end{align}

If $N < \ceil{n/2}$, then (ii) and (iii) imply that $\sum_{m=1}^M A_m > n/2 - \ceil{\sqrt{n/k_n}}$, which, in 
turn, implies $M > \frac{n/2}{\ceil{\sqrt{n/k_n}}} - 1 \geq (1/2 - \epsilon)\sqrt{k_n n}$, for all $n \geq n_0$, by (i). 
Thus, if (\ref{eq:TheClaim}) holds so does (\ref{eq:ETS_MBig}). So, it remains only to show (\ref{eq:TheClaim}).
This we do through a series of four steps below. 

\begin{enumerate}

\item The event $\{m(t) < M\}$ depends only on $C_1,...,C_{t-1}$, and conditioned on $m(t) < M$ and $C_t > C_{T_{m(t)}}$ the $k_n$ 
random card choices $C_{t,1},...,C_{t,k_n}$ are i.i.d. uniform on $D_t^+ \equiv \{j \in D_t:j > C_{T_{m(t)}}\}$. Thus, if $c_1,...,c_{t-1}$ 
are any particular choices for the first $t-1$ cards such that $C_1 = c_1,...,C_{t-1} = c_{t-1}$ implies $m(t) < M$, we have
\begin{align*}
\P_{n,k_n}&(C_t \not\in S_{m(t)+1} |C_1= c_1,...,C_{t-1} = c_{t-1}, C_t > C_{T_{m(t)}}) \\
& = \left( \frac{d - \ceil{\sqrt{n/k_n}}}{d} \right)^{k_n}
\leq \left( \frac{n - \sqrt{n/k_n}}{n} \right)^{k_n}
\leq e^{-\sqrt{k_n/n}}
\end{align*} 
where $\ceil{\sqrt{n/k_n}} \leq d \leq n$ is the size of the set $D_t^+$ on the event $\{C_1 = c_1,...,C_{t-1} = c_{t-1}\}$.  
Hence, for all $n \geq n_0$ and any such $c_1,...,c_{t-1}$, we have
\begin{align}
\label{eq:GoodProbHitSmnext}
\P_{n,k_n}(C_t \in S_{m(t)+1} |C_1= c_1,...,C_{t-1} = c_{t-1}, C_t > C_{T_{m(t)}}) \geq (1-\epsilon) \sqrt{k_n/n}. 
\end{align}

\item Let $(Z_i)_{i \in \N}$ be defined by
\begin{align*}
Z_i =
\left\{ \begin{array}{l}
\indicator \{C_{t_i} \in S_{m(t_i) + 1} \}, \mbox{ if } i \leq N \\
0, \mbox{ otherwise}.
\end{array} \right.
\end{align*}
Then, by (\ref{eq:GoodProbHitSmnext}), for any particular values $z_1,...,z_{i-1}$ of the random variables 
$Z_1,...,Z_{i-1}$ such that the event $\{Z_1 = z_1, ..., Z_{i-1} = z_{i-1}, N \geq i\}$ is possible we have
\begin{align}
\label{eq:GoodProbZiEquals1}
\P_{n,k_n}(Z_i = 1| Z_1 = z_1,...,Z_{i-1} = z_{i-1}, N \geq i) \geq (1-\epsilon) \sqrt{k_n/n}.
\end{align}

\item Let $p_{\epsilon,n} = (1-\epsilon) \sqrt{k_n/n}$. Then, by (\ref{eq:GoodProbZiEquals1}), it is possible to 
couple a sequence of i.i.d. Ber($p_{\epsilon,n}$) random variables, $(X_i)_{i \in \N}$, to the $k$CM process
such that, for all $1 \leq i \leq N$, $Z_i = 1$ whenever $X_i = 1$. That is, by enlarging the underlying probability
space for the $k$CM process $(C_t)_{t=1}^n$, we may define this process along with the i.i.d. Bernoulli
sequence $(X_i)$ on a common probability space $\Omega$, such that $Z_i = 1$, for all $1 \leq i \leq N$ 
with $X_i = 1$. The measure for this joint space will, with a slight abuse of notation, continue be denoted $\P_{n,k_n}$.

\item If $X$ has Bin$(m,p)$ distribution, then by \cite[Theorem A.1.13]{Alon2000},
\begin{align*}
\P(X < mp - x) \leq e^{-\left(\frac{x^2}{2mp}\right)}~,~ \forall x \geq 0.
\end{align*}  
Taking $X = \sum_{i = 1}^{\ceil{n/2}} X_i$ and $x = \epsilon \sqrt{k_n/n} \ceil{n/2}$ gives
\begin{align*}
\P_{n,k_n}&\left(\sum_{i=1}^{\ceil{n/2}} X_i < (1/2 - \epsilon) \sqrt{k_n n} \right) \\
&\leq \P_{n,k_n}\left(\sum_{i=1}^{\ceil{n/2}} X_i < (1 - \epsilon) \sqrt{k_n/n} \ceil{n/2} - \epsilon \sqrt{k_n/n} \ceil{n/2} \right) \\
& \leq \exp\left( -\frac{\left(\epsilon \sqrt{k_n/n} \ceil{n/2}\right)^2}{2 \ceil{n/2}(1 -\epsilon) \sqrt{k_n/n}} \right)
\leq \exp\left(-\frac{\epsilon^2}{4(1-\epsilon)} \cdot \sqrt{k_n n}\right).
\end{align*}
Thus, since $M = \sum_{i = 1}^N Z_i$, the coupling between the $X_i$'s and $Z_i$'s implies
\begin{align*}
\P_{n,k_n}&\left(N \geq \ceil{n/2} , M < (1/2-\epsilon)\sqrt{k_n n}\right) \\
& = \P_{n,k_n}\left(N \geq \ceil{n/2} , \sum_{i=1}^{N} Z_i < (1/2-\epsilon)\sqrt{k_n n}\right) \\
& \leq \P_{n,k_n}\left(N \geq \ceil{n/2} , \sum_{i=1}^{\ceil{n/2}} Z_i < (1/2-\epsilon)\sqrt{k_n n}\right) \\
& \leq \P_{n,k_n}\left(N \geq \ceil{n/2} , \sum_{i=1}^{\ceil{n/2}} X_i < (1/2-\epsilon)\sqrt{k_n n}\right) \\
& \leq \P_{n,k_n}\left(\sum_{i=1}^{\ceil{n/2}} X_i < (1/2-\epsilon)\sqrt{k_n n}\right) \\
& \leq \exp\left(-\frac{\epsilon^2}{4(1-\epsilon)} \cdot \sqrt{k_n n}\right) 
\end{align*}
proving (\ref{eq:TheClaim}).

\end{enumerate}
\end{proof}  

\subsection{Variance Estimate for $L$}
\label{subsec:VarianceEstimateForL}

In this section we prove Proposition \ref{prop:VarL}. For this we will need the following two lemmas. 

\begin{Lem}[Efron-Stein Inequality]
\label{lem:EfronStein}
Let $X_1,...,X_n$ be independent random variables and let $Y=f(X_1,...,X_n)$, for some
measuable function $f : \R^n \rightarrow \R$. Then
\begin{align*}
\Var(Y) \leq \sum_{t=1}^n \E\left\{ \Var(Y|X_1,...,X_{t-1},X_{t+1},...,X_n) \right\}.
\end{align*}
\end{Lem}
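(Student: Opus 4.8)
The plan is to prove the inequality through the Doob martingale (telescoping conditional–expectation) decomposition of $Y - \E(Y)$, combining the orthogonality of martingale differences with the $L^2$-contraction property of conditional expectation. This is the standard route and is entirely self-contained.

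First I would fix the filtration generated by the inputs. Write $\E_t(\cdot) = \E(\cdot \mid X_1,\dots,X_t)$ for $0 \le t \le n$, so that $\E_0(\cdot)=\E(\cdot)$ is the unconditional expectation and $\E_n(Y)=Y$. Define the martingale differences $\Delta_t = \E_t(Y) - \E_{t-1}(Y)$ for $1 \le t \le n$, so that $Y - \E(Y) = \sum_{t=1}^n \Delta_t$. Each $\Delta_t$ is $\mathcal{F}_t$-measurable and satisfies $\E_{t-1}(\Delta_t)=0$; hence for $s<t$ one has $\E(\Delta_s \Delta_t) = \E[\Delta_s \, \E_{t-1}(\Delta_t)] = 0$, using that $\Delta_s$ is $\mathcal{F}_{t-1}$-measurable to pull it out of the inner conditional expectation. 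The differences are therefore orthogonal and $\Var(Y) = \sum_{t=1}^n \E(\Delta_t^2)$. (This step requires only $Y \in L^2$; otherwise the asserted bound is vacuous.)

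The crux is to bound each $\E(\Delta_t^2)$ by $\E\{\Var(Y \mid X^{(t)})\}$, where $X^{(t)} = (X_1,\dots,X_{t-1},X_{t+1},\dots,X_n)$ denotes all inputs except $X_t$, and I write $\E^{(t)}(Y) = \E(Y \mid X^{(t)})$ for the conditional expectation that integrates out only $X_t$. The key identity, which is where independence is used, is $\E_{t-1}(Y) = \E_t[\E^{(t)}(Y)]$: since $\E^{(t)}(Y)$ does not depend on $X_t$, conditioning it on $X_1,\dots,X_t$ is the same as integrating out $X_{t+1},\dots,X_n$ while keeping $X_1,\dots,X_{t-1}$ fixed, and by independence this returns exactly $\E_{t-1}(Y)$. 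Consequently $\Delta_t = \E_t(Y) - \E_t[\E^{(t)}(Y)] = \E_t[\,Y - \E^{(t)}(Y)\,]$.

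Finally I would invoke conditional Jensen's inequality, i.e. that $\E_t$ is an $L^2$ contraction, to get $\E(\Delta_t^2) = \E\big([\E_t(Y - \E^{(t)}(Y))]^2\big) \le \E[(Y - \E^{(t)}(Y))^2]$. Towering the right-hand side through $X^{(t)}$ identifies it as $\E\{\E[(Y - \E^{(t)}(Y))^2 \mid X^{(t)}]\} = \E\{\Var(Y \mid X^{(t)})\}$, because $\E^{(t)}(Y)$ is precisely the conditional mean appearing in that conditional variance. Summing over $t$ then gives $\Var(Y) = \sum_{t=1}^n \E(\Delta_t^2) \le \sum_{t=1}^n \E\{\Var(Y \mid X^{(t)})\}$, which is the claim. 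I expect the only delicate point to be verifying the independence identity $\E_{t-1}(Y) = \E_t[\E^{(t)}(Y)]$ carefully; the orthogonal decomposition and the contraction bound are routine once that identity is in place.
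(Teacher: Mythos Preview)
Your argument is correct: the Doob--martingale decomposition $Y-\E(Y)=\sum_t \Delta_t$, the orthogonality of the increments, the independence identity $\E_{t-1}(Y)=\E_t[\E^{(t)}(Y)]$, and the conditional Jensen bound $\E(\Delta_t^2)\le \E\{\Var(Y\mid X^{(t)})\}$ are all sound as stated. The paper does not actually prove this lemma---it simply cites \cite{Boucheron2004}---so there is no in-paper argument to compare against; your proof is essentially the standard one given in that reference.
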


\begin{Lem}
\label{lem:ChangeLBy1}
Let $j_1,...,j_n \in [n] \times ... \times [1]$ be any sequence of possible choices for the relative card positions 
$\Ct_1,...,\Ct_n$ and let $i_1,...,i_n$ be the associated choices of actual cards $C_1,...,C_n$. That is,
\begin{align*}
\Ct_1 = j_1,..., \Ct_n = j_n \iff C_1 = i_1,..., C_n = i_n.
\end{align*}
Also, let $t \in [n]$, let $j_t' \not= j_t$ be another possible choice for the relative position of the $t$-th card,
and let $i'_1,...,i'_n$ be the associated sequence of cards $C_1,...,C_n$ obtained with relative
choices $\Ct_t = j_t'$ and $\Ct_{\tau} = j_{\tau}$, for all $\tau \not= t$. That is,
\begin{align*}
\Ct_t = j_t' \mbox{ and } \Ct_{\tau} = j_{\tau}, \forall \tau \not= t  \iff &~ C_1 = i_1' , ... , C_n = i_n'.
\end{align*}
Finally, let $\ell$ and $\ell'$ denote, respectively, the lengths of the longest increasing subsequences in 
the permutations $\sigma = (i_1,...,i_n)$ and $\sigma' = (i_1', ..., i_n')$. Then
\begin{align*}
|\ell - \ell'| \leq 1.
\end{align*}
\end{Lem}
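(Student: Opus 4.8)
Let $D_\tau$ and $D'_\tau$ denote the decks at time $\tau$ in the processes producing $\sigma$ and $\sigma'$. Since the $t$-th relative position is irrelevant before time $t$, we have $D_\tau=D'_\tau$ for all $\tau\le t$ and $i_\tau=i'_\tau$ for $\tau\le t-1$, while $i_t\ne i'_t$ because $j_t\ne j'_t$ picks different cards of the common deck $D_t$. Each process removes exactly one card at time $t$, so $|D_{t+1}|=|D'_{t+1}|=n-t$; let $\psi\colon D_{t+1}\to D'_{t+1}$ be the unique increasing bijection. For $\tau>t$ both processes apply the same relative position $j_\tau$, and an increasing bijection carries the $j_\tau$-th smallest element of a set to the $j_\tau$-th smallest of its image; hence induction on $\tau$ gives $D'_\tau=\psi(D_\tau)$ and therefore $i'_\tau=\psi(i_\tau)$ for every $\tau>t$. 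Writing $D_t=\{d_1<\dots<d_r\}$ with $i_t=d_p$ and $i'_t=d_q$, one checks directly that $\psi$ is the identity except on the cards of $D_t$ lying between $i_t$ and $i'_t$, where it shifts each such card by one position along $D_t$ toward $i_t$. In particular, if $i_t<i'_t$ then $\psi(x)\le x$ for all $x$, and whenever $\psi(x)\ne x$ the value $\psi(x)$ is the largest element of $D_t$ below $x$, which is $\ge i_t$; the mirror inequalities ($\psi(x)\ge x$, etc.) hold if $i_t>i'_t$.

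By symmetry (interchange the roles of $j_t$ and $j'_t$) it suffices to prove $\ell'\ge\ell-1$. Assume first $i_t<i'_t$ and fix a longest increasing subsequence of $\sigma$, occurring at times $\tau_1<\dots<\tau_\ell$ with $i_{\tau_1}<\dots<i_{\tau_\ell}$; let $r$ be the number of these times that are $<t$. Build a candidate subsequence of $\sigma'$: keep $\tau_1,\dots,\tau_r$ with their unchanged values; delete time $t$ if it is among the $\tau_i$; keep every remaining time $\tau_i>t$ but with value $\psi(i_{\tau_i})$. Because $\psi$ is increasing, the kept values at times $<t$ are strictly increasing and the kept values at times $>t$ are strictly increasing, so the only possible defect is at the junction between these two runs.

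If $r=0$, or no time $>t$ is kept, the candidate is already a valid increasing subsequence of length $\ge\ell-1$. Otherwise, let $i_{\tau_r}$ be the last value of the first run. Case 1: time $t$ belonged to the original subsequence (so its value there was $i_t$ and $i_{\tau_r}<i_t$). The first kept value after $t$ is $\psi(i_u)$ where $i_u$, being removed after $t$, lies in $D_{t+1}\subseteq D_t$, and $i_u>i_t$; since $i_t\in D_t$, the largest element of $D_t$ below $i_u$ is $\ge i_t$, so $\psi(i_u)\ge i_t>i_{\tau_r}$ and the junction holds — we have deleted only time $t$, so the candidate has length $\ell-1$. Case 2: time $t$ did not belong to the original subsequence. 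The first kept value after $t$ is $\psi(i_v)$ with $v=\tau_{r+1}$; if $i_{\tau_r}<\psi(i_v)$ we keep everything (length $\ell$). If not, also delete $\tau_{r+1}$; then the first kept value after $t$ is $\psi(i_w)$ with $i_w\in D_t$ and $i_w>i_v$ (or the candidate is just the first run, of length $r=\ell-1$), and since $i_v\in D_t$ with $i_v>i_{\tau_r}$, the largest element of $D_t$ below $i_w$ is $\ge i_v>i_{\tau_r}$, so $\psi(i_w)\ge i_v>i_{\tau_r}$ and the junction holds; the candidate has length $\ell-1$. In every case $\ell'\ge\ell-1$. When $i_t>i'_t$ the construction is even simpler: $\psi(x)\ge x$ for all $x$, so every junction $i_{\tau_r}<i_\star\le\psi(i_\star)$ holds with at most the single deletion of time $t$. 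This proves $\ell'\ge\ell-1$, and with the symmetric statement, $|\ell-\ell'|\le 1$.

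The main obstacle is the first step — recognizing that the card-sequences of the two processes after time $t$ are order-isomorphic — which is precisely what stops a change in $j_t$ from having an unbounded effect, even though $j_t$ may jump across an interval of length $n-t$; a naive attempt to chain single $\pm1$ changes of $j_t$ fails for exactly this reason. Once that structure is available, the only delicate point is the junction when the original increasing subsequence passes through time $t$: a priori one fears deleting both that term and the one following it, but the explicit description of $\psi$ — it never demotes a card below $i_t$ — guarantees the following term lands in a usable place, so a single deletion always suffices.
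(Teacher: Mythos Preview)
Your proof is correct and follows essentially the same approach as the paper: both arguments hinge on the observation that the tails $(i_\tau)_{\tau>t}$ and $(i'_\tau)_{\tau>t}$ are order-isomorphic (your increasing bijection $\psi$ is the paper's fact (iii)/(iv)), and both then take a longest increasing subsequence in one permutation and repair the junction at time $t$ to obtain a subsequence of length $\ge \ell-1$ in the other. The only organizational difference is that the paper assumes WLOG $j'_t>j_t$ and proves the two inequalities $\ell'\le\ell+1$ and $\ell\le\ell'+1$ by slightly different arguments (the first via the auxiliary quantities $\ell_t,\ell'_t$), whereas you invoke symmetry to reduce to a single inequality and then treat the two cases $i_t\lessgtr i'_t$ within it; your junction analysis in the case $i_t<i'_t$ is essentially the paper's Claim~2.
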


Using these lemmas the proof of the proposition is actually quite simple. So, we present this first, followed by 
the more involved proof of Lemma \ref{lem:ChangeLBy1}, which requires analysis of several different cases. 
For a proof of Lemma \ref{lem:EfronStein}, see \cite{Boucheron2004}. 

\begin{proof}[Proof of Proposition \ref{prop:VarL}]
$L$ is a (deterministic) function of the relative card choices $\Ct_1,...,\Ct_n$, and by Proposition 
\ref{prop:IndependenceAndDistributionItAndCtt} these relative card choices are independent.  
So, by Lemma \ref{lem:EfronStein}, we have
\begin{align}
\label{eq:EfronSteinVarSumBound}
\Var_{n,k}(L) \leq \sum_{t=1}^n \E_{n,k} \left\{ \Var_{n,k}(L|\Ct_1,...,\Ct_{t-1},\Ct_{t+1},...,\Ct_n) \right\}.
\end{align}
Moreover, by Lemma \ref{lem:ChangeLBy1}, $L$ can take only one of two possible consecutive integer values
if $\Ct_1,...,\Ct_{t-1},\Ct_{t+1},...,\Ct_n$ are fixed. So, for each $t$, 
\begin{align}
\label{eq:VarConditionedOnEachBound}
\E_{n,k} \{\Var_{n,k}(L|\Ct_1,...,\Ct_{t-1},\Ct_{t+1},...,\Ct_n)\} \leq \max_{p \in [0,1]} \Var\left\{ \mbox{Ber}(p) \right\} = 1/4.
\end{align}
Together (\ref{eq:EfronSteinVarSumBound}) and (\ref{eq:VarConditionedOnEachBound}) imply the claim.
\end{proof}

\begin{proof}[Proof of Lemma \ref{lem:ChangeLBy1}]
For the proof we will need the following additional notation.
\begin{itemize}
\item $\ell_t$ and $\ell_t'$ denote, respectively, the lengths of the longest increasing subsequences in 
$\sigma_t = (i_1,...,i_{t-1}, i_{t+1},...,i_n)$ and $\sigma_t' = (i'_1,...,i'_{t-1},i_{t+1}',...,i_n')$. 
\item For each $\tau \in [n]$, $d_{\tau} = [n]/\{i_1,...,i_{\tau-1}\}$ and $d_{\tau}' = [n]/\{i'_1,...,i'_{\tau -1}\}$. 
Also, $d_{{\tau},j}$ is $j$-th lowest numbered element in the set $d_{\tau}$, and $d'_{{\tau},j}$ is $j$-th 
lowest numbered element in the set $d'_{\tau}$.
\item Finally, for convenience, we write $\ell \sim (i_{t_1}, ..., i_{t_\ell})$ to mean than $(i_{t_1}, ... , i_{t_\ell})$ is a longest increasing
subsequence in $\sigma$ (i.e. $t_1 < ... < t_{\ell}$ and $i_{t_1} < ... < i_{t_\ell}$). Similar notation is also used with $\ell'$, $\ell_t$, and $\ell'_t$. 
\end{itemize}
From the definitions, it is immediate that:
\begin{itemize}
\item[(i)] $\ell \in \{\ell_t, \ell_t + 1\}$ and $\ell' \in \{\ell'_t, \ell'_t + 1\}$.
\item[(ii)] $i_{\tau} = i'_{\tau}$, for each $\tau = 1,...,t-1$. 
\item[(iii)] The position of card $d_{t+1,j}$ in $\sigma$ is the same as the position of card 
$d'_{t+1,j}$ in $\sigma'$, for each $j = 1,...,n-t$. That is, for any $\tau > t$, $i_{\tau} = d_{t+1,j}$ if and only if 
$i'_{\tau} = d'_{t+1,j}$. Hence, for any sequence of times $t < t_1 < ... < t_m \leq n$,
$i_{t_1} < ... < i_{t_m}$ if and only if $i'_{t_1} < ... < i'_{t_m}$. 

\end{itemize}

In the remainder of the proof we will assume, without out loss of generality, that $j_t' > j_t$. Under this 
assumption we have also the following relations between the elements of $d_t$, $d_{t+1}$, and $d'_{t+1}$. 
\begin{itemize}
\item[(iv)] ~ \vspace{- 3 mm}
\begin{align*}
\mbox{ For } & j = 1,...,j_t - 1 ~,~~ d_{t+1,j} = d'_{t+1,j} = d_{t,j}. \\
\mbox{ For } & j = j_t,...,j_t' - 1 ~,~ d_{t+1,j} = d_{t,j+1}. \\
\mbox{ For } & j = j_t,...,j_t' - 1 ~,~ d'_{t+1,j} = d_{t,j}. \\
\mbox{ For } & j = j'_t,...,n-t ~,~~ d_{t+1,j} = d'_{t+1,j} = d_{t,j+1}.
\end{align*} 
In particular, $d_{t+1,j} \geq d'_{t+1,j}$ for each $j = 1,...,n-t$. So, $i_{\tau} \geq i'_{\tau}$, for all $\tau > t$, by (iii). \\
\end{itemize}
Using facts (i)-(iv) we now prove the lemma through a series of two claims. \\

\noindent
\underline{Claim 1:} $\ell' \leq \ell + 1$. 

\noindent
\underline{Pf:} Let $\ell'_t \sim (i'_{t_1},...,i'_{t_{\ell'_t}})$ and let $M = \max \{m : t_m < t\}$, with the convention $M = 0$ 
if there is no such $m$. If $M = \ell'_t$ then $i_{t_1} < ... < i_{t_{\ell'_t}}$ by (ii), and if $M = 0$ then $i_{t_1} < ... < i_{t_{\ell'_t}}$ 
by (iii). If $0 < M < \ell'_t$ then $i_{t_1} < ... < i_{t_M}$ by (ii), $i_{t_{M+1}} < ... < i_{t_{\ell'_t}}$ by (iii), and $i_{t_{M+1}} \geq
i'_{t_{M+1}} > i'_{t_M} = i_{t_M}$ by (iv). Thus, again, $i_{t_1} < ... < i_{t_{\ell'_t}}$. It follows that $\ell_t \geq \ell'_t$, 
from which the claim follows by (i). \\

\noindent
\underline{Claim 2:} $\ell \leq \ell' + 1$. \\
\underline{Pf:} Let $\ell \sim (i_{t_1},...,i_{t_\ell})$ and let $M = \max \{m : t_m < t\}$, with the convention $M = 0$ 
if there is no such $m$. We consider separately two cases: $M \in \{0, \ell - 1, \ell\}$ and  $0< M < \ell - 1$. \\

\noindent
\underline{Case 1:} $M \in \{0, \ell -1, \ell\}$.\\
If $M = \ell$ or $M = \ell - 1$, then $i'_{t_1} < ... < i'_{t_{\ell-1}}$ by (ii), which implies $\ell' \geq \ell - 1$. 
If $M = 0$ then it is possible $t_1 = t$, but $t_2,..., t_{\ell}$ are all greater than $t$. Thus, by (iii), 
$i'_{t_2} < ... < i'_{t_\ell}$, which implies $\ell' \geq \ell - 1$. \\

\noindent
\underline{Case 2:} $0 < M < \ell - 1$ \\
In this case, $i_{t_{M+1}} = d_{t,a}$ for some $a$ and $i_{t_{M+2}} = d_{t,b}$ for some $b > a$, and it follows from (iii) and (iv) that  
$i'_{t_{M+2}} = d_{t,b'}$ for some $b' \geq b-1$. Thus, we have $i'_{t_{M+2}} = d_{t,b'} \geq d_{t,a} = i_{t_M+1} > i_{t_M} = i'_{t_M}$. 
Also, $i'_{t_1} < ... < i'_{t_M}$ by (ii), and $i'_{t_{M+2}} < ... < i'_{t_\ell}$ by (iii). So, altogether, we have 
$i'_{t_1} < ... < i'_{t_M} < i'_{t_{M+2}} < ... < i'_{t_{\ell}}$, which implies $\ell' \geq \ell - 1$. 

\end{proof}

\subsection{Scaling of $L$ and the Weak Law}
\label{subsec:ScalinOfLAndWeakLaw} 

In this section we prove Theorems \ref{thm:ScalingOfL} and \ref{thm:WeakLawL}.

\begin{proof}[Proof of Theorem \ref{thm:ScalingOfL}]
(\ref{eq:ScalingOfL}) is immediate from Propositions \ref{prop:UpperBoundL} and \ref{prop:LowerBoundL},
and the lower bound in (\ref{eq:ScalingOfL}) gives 
\begin{align*}
1/2 \leq \liminf_{n \to \infty} ~ \E_{n,k_n}(L)/\sqrt{k_n n}.
\end{align*}
Thus, it remains only to show 
\begin{align}
\label{eq:limsupELbound}
\limsup_{n \to \infty} ~ \E_{n,k_n}(L)/\sqrt{k_n n} \leq 4e.
\end{align}
To do this we will use the upper bound of Proposition \ref{prop:UpperBoundL}.

For $n \in \N$, let $y_{\max} = y_{\max}(n)$ be defined by $4e(1+2y_{\max})\sqrt{k_n n} = n$, and,
for $\epsilon > 0$, let $c_{\epsilon} > 0$ be defined by $e^{-c_{\epsilon}} = \left(\frac{1}{1+\epsilon}\right)^{8e}$. 
Then, by Proposition \ref{prop:UpperBoundL}, we know that for any $\epsilon > 0$ the following
estimate holds for all sufficiently large $n$ and each $\epsilon \leq y \leq y_{\max}(n)$:
\begin{align*}
\P_{n,k_n}\left(L > 4e(1+2y) \sqrt{k_n n}\right)
& \leq \left( \frac{\log \sqrt{n/(y^2 k_n)}}{\log 4} \right) \cdot \left(\frac{1}{1+y}\right)^{8e \sqrt{y} (1+y) k_n^{3/4} n^{1/4} } \\
& \leq \log(n) \cdot e^{-c_{\epsilon} n^{1/4} y}.
\end{align*}
Thus, using the change of variables $x = 4e(1+2y) \sqrt{k_n n}$, we have
\begin{align*}
\E_{n,k_n}(L) 
& = \int_0^{n} \P_{n,k_n}(L > x) dx \\
& \leq 4e(1+2\epsilon) \sqrt{k_n n} ~+~ \int_{4e(1+2\epsilon)\sqrt{k_n n}}^{n} ~ \P_{n,k_n}(L > x) dx \\
& \leq 4e(1+2\epsilon) \sqrt{k_n n} ~+~ 8e \sqrt{k_n n} \int_{\epsilon}^{y_{\max}} \log(n) \cdot e^{-c_{\epsilon} n^{1/4} y} ~dy \\
& \leq 4e(1+2\epsilon) \sqrt{k_n n} ~+~ 8e \sqrt{k_n n} \cdot \log(n) \cdot \frac{e^{-c_{\epsilon} n^{1/4} \epsilon}}{c_{\epsilon} n^{1/4}}
\end{align*} 
for all sufficiently large $n$. So, 
\begin{align*}
\limsup_{n \to \infty} ~ \E_{n,k_n}(L)/\sqrt{k_n n} \leq 4e(1+2\epsilon).
\end{align*}
Since $\epsilon > 0$ is arbitrary this shows that (\ref{eq:limsupELbound}) holds, completing the proof.
\end{proof} 

\begin{proof}[Proof of Theorem \ref{thm:WeakLawL}]
By Theorem \ref{thm:ScalingOfL}, $\E_{n,k_n}(L) \geq \frac{1}{4} \sqrt{k_n n}$ for all 
sufficiently large $n$. So, by Chebyshev's inequality and Proposition \ref{prop:VarL}, we have 
\begin{align}
\label{eq:LFluctuationSmall}
\P_{n,k_n}\left( \left| \frac{L}{\E_{n,k_n}(L)} - 1 \right| > \epsilon \right) 
\leq \frac{\Var_{n,k_n}(L)}{\epsilon^2 \cdot \left( \E_{n,k_n}(L) \right)^2}
\leq \frac{4}{\epsilon^2 k_n}
\end{align}
for all sufficiently large $n$. The theorem follows, since the right hand side of 
(\ref{eq:LFluctuationSmall}) tends to $0$, if $k_n \rightarrow \infty$. 
\end{proof}

\section{Analysis of Optimality for the $k$-Card-Minimum Procedure}
\label{sec:AnalysisOfOptimality_kCMProcedure} 

In this section we prove Propositions \ref{eq:MinStrategyOptimalForI} and \ref{eq:MinStrategyNotOptimalForL}. 
Random variables and probability measures are defined as above for the $k$CM procedure, 
but with superscripts to indicate the strategy used as needed. 

\begin{proof}[Proof of Proposition \ref{eq:MinStrategyOptimalForI}]
Fix $k,n \in \N$ and let $\SM_{other} \not= \SM_{\min}$ be any other $(k,n)$ choice strategy. Couple the $\SM_{\min}$ and $\SM_{other}$ 
processes so that in both processes the $k$ random card choices $C_{t,1},...,C_{t,k}$ at each time $t$ occupy the same relative positions 
in the remaining sets $D_t$. That is, for each $1 \leq t \leq n$ and $1 \leq i \leq k$,
\begin{align*}
C_{t,i}^{\min} = \mbox{ $j$-th lowest card in } D_t^{\min} \iff C_{t,i}^{other} = \mbox{ $j$-th lowest card in } D_t^{other}.
\end{align*}
Then, under this coupling, we will have $\Ct_t^{\min} \leq \Ct_t^{other}$ with probability 1, for each $t$. Hence, also,
$I_t^{\min} \leq I_t^{other}$ with probability 1, and $I^{\min} \leq I^{other}$ with probability 1, by relations 
(\ref{eq:ItCttRelation}) and (\ref{eq:IequalssumIts}). The claim follows. 
\end{proof} 
 
\begin{proof}[Proof of Proposition \ref{eq:MinStrategyNotOptimalForL}]
We show separately the two claims of the proposition. \\

\noindent
\underline{Claim 1}: For any $k \geq 2$ and $n \geq 5$ there is no $(k,n)$ stochastically optimal strategy for maximizing $L$. \\ \\
\underline{Pf}:
Fix $k \geq 2$, $n \geq 5$ and let $\ell = \floor{n/2}$, $m = \ceil{n/2}$.  Assume there exists a $(k,n)$ optimal strategy $\SM$,
and consider the event $A$ that:
\begin{enumerate}
\item $C_1 = \ell + 1, C_2 = \ell + 2, ... , C_{m-2} = \ell + m - 2 = n - 2$, and
\item $C_{m-1,1} = 1$ and $C_{m-1,j} = n-1$, for each $2 \leq j \leq k$. 
\end{enumerate}
There are two possibilities on the event $A$. Either card $1$ or card $n-1$ is selected with the strategy $\SM$ for $C_{m-1}$. 
In either case, we define the strategy $\hat{\SM}$ as follows:
\begin{itemize}
\item $\hat{\SM}$ uses exactly the same selection rules as $\SM$ for all $1 \leq t \leq m-2$.
\item If $A$ does not occur, $\hat{\SM}$ also uses the same selection rules as $\SM$ for all $t \geq m-1$.
\item If $A$ does occur, then $\hat{\SM}$ makes the opposite selection as $\SM$ for card $C_{m-1}$, 
and then uses the choices of the minimum strategy $\SM_{\min}$ for all $t \geq m$. 
\end{itemize} 
We consider separately two cases, depending on which of the two possible choices the strategy $\SM$ selects 
for $C_{m-1}$ on the event $A$. In each case, we will show that
\begin{align*}
\P_{n,k}^{\hat{\SM}}(L \geq x) > \P_{n,k}^{\SM}(L \geq x), ~\mbox{ for some}~ x,
\end{align*} 
which contradicts the fact that $\SM$ is stochastically optimal. \\

\noindent 
\underline{Case 1}: $\SM$ selects $C_{m-1} = 1$ on the event $A$. \\
In this case, $\hat{\SM}$ selects $C_{m-1} = n-1$ on the event $A$, and thereby guarantees a length $m$ increasing 
subsequence $(\ell+1, \ell+2, ..., \ell + m = n)$ in the final permutation $\sigma$. Whereas, under $\SM$ one is only 
guaranteed a length $m-1$ longest increasing subsequence on the event $A$. Since there is positive probability that $A$ 
will occur, and $\SM$ and $\hat{\SM}$ behave identically if $A$ does not occur, it follows that
\begin{align*}
\P_{n,k}^{\hat{\SM}}(L \geq m) > \P_{n,k}^{\SM}(L \geq m).
\end{align*} 

\noindent 
\underline{Case 2}: $\SM$ selects $C_{m-1} = n-1$ on the event $A$. \\
In this case, $\hat{\SM}$ selects $C_{m-1} = 1$ on the event $A$, and there is some chance of ending with a 
length $\ell + 2$ increasing subsequence $(1,...,\ell,n-1,n)$ in the final permutation $\sigma$. Whereas, under 
$\SM$ the maximum possible length of increasing subsequence is only $\ell + 1$ on the event $A$. Since there 
is positive probability that $A$ will occur, and $\SM$ and $\hat{\SM}$ behave identically if $A$ does not occur, it follows that
\begin{align*}
\P_{n,k}^{\hat{\SM}}(L \geq \ell + 2) > \P_{n,k}^{\SM}(L \geq \ell + 2).
\end{align*} 

\noindent
\underline{Claim 2}: For all $k \geq 2, n \geq 4$ there exists a strategy $\SM_{copy}$, which is strictly better than $\SM_{\min}$ for maximizing $L$. \\ \\
\underline{Pf}:
Define $\SM_{copy}$ to be the strategy that uses exactly the same selection rules as $\SM_{\min}$ for $1 \leq t \leq n - 3$, 
and also the same selection rules as $\SM_{min}$ for $t \geq n-2$ unless both of the following conditions hold:
\begin{enumerate}
\item $C_1 = 2, C_2 = 3, ... , C_{n-3} = n - 2$. 
\item $\{C_{n-2,1},...,C_{n-2,k}\} = \{1,n-1\}$. That is, one is given only copies of cards $1$ and $n-1$ to pick from
at time $n-2$, and at least one copy of each. 
\end{enumerate}
In this case, the strategy $\SM_{copy}$ selects $C_{n-2} = n-1$ (instead of $C_{n-2} = 1$, as selected by $\SM_{\min}$). 

In the critical case when the two strategies select differently for $C_{n-2}$, $\SM_{copy}$ ensures a length $n-1$ 
increasing subsequence $(2,3,...,n)$ in the final permutation $\sigma$, whereas with $\SM_{\min}$ one is
only guaranteed a length $n-2$ longest increasing subsequence. Since no length $n$ increasing subsequence 
is possible in this instance, as $C_1 \not=1$, $\SM_{copy}$ is strictly better for maximizing $L$ in this instance than 
$\SM_{\min}$. In all other instances $\SM_{copy}$ and $\SM_{\min}$ behave identically, so the claim follows. 
\end{proof} 

\section*{Acknowledgments}
The author thanks Ross Pinsky, Ron Peled, and Jesse Goodman for helpful discussions and Ross Pinsky for suggestion of the model. 

\bibliography{ref}

\begin{thebibliography}{10}

\bibitem{Feller1968}
W.~Feller.
\newblock {\em An introduction to probability theory and its applications},
  volume~1.
\newblock Wiley and Sons, third edition, 1968.

\bibitem{Vershik1977}
A.M. Vershik and S.V. Kerov.
\newblock Asymptotics of the {Plancherel} measure of the symmetric group and
  the limiting form of {Young} tableaux.
\newblock {\em Soviet Math. Dokl.}, 18:527--531, 1977.

\bibitem{Logan1977}
B.F. Logan and L.A. Shepp.
\newblock A variational problem for random {Young} tableaux.
\newblock {\em Advances in Math.}, 26:206--222, 1977.

\bibitem{Aldous1995}
D.~Aldous and P.~Diaconis.
\newblock Hammersley's interacting particle process and longest increasing
  subsequence.
\newblock {\em Probab. Theory Related Fields}, 103:199--213, 1995.

\bibitem{Baik1999}
J.~Baik, P.~Deift, and K.~Johansson.
\newblock On the distribution of the length of the longest increasing
  subsequence of random permutations.
\newblock {\em J. Amer. Math. Soc.}, 12(4):1119--1178, 1999.

\bibitem{Azar1999}
Y.~Azar, A.Z. Broder, A.R. Karlin, and E.~Upfal.
\newblock Balanced allocations.
\newblock {\em {SIAM} J. Comput.}, 29(1):180--200, 1999.

\bibitem{Bohman2006}
T.~Bohman and D.~Kravitz.
\newblock Creating a giant component.
\newblock {\em Combin. Probab. Comput.}, 15:489--511, 2006.

\bibitem{Spencer2007}
J.~Spencer and N.~Wormwald.
\newblock Birth control for giants.
\newblock {\em Combinatorica}, 27(5):587--628, 2007.

\bibitem{Krivelevich2009}
M.~Krivelevich, P.~Loh, and B.~Sudakov.
\newblock Avoiding small subgraphs in {Achlioptas} processes.
\newblock {\em Random Structures Algorithms}, 34:165--195, 2009.

\bibitem{Krivelevich2012}
M.~Krivelevich and R.~Sp{\"{o}}hel.
\newblock Creating small subgraphs in {Achlioptas} processes with growing
  parameter.
\newblock {\em {SIAM} J. Discrete Math.}, 26:670--686, 2012.

\bibitem{Mallows1957}
C.L. Mallows.
\newblock Non-null ranking models {I}.
\newblock {\em Biometrika}, 44:114--130, 1957.

\bibitem{Bhatnagar2013}
N.~Bhatnagar and R.~Peled.
\newblock Lengths of monotone subsequences in a {Mallows} permutation.
\newblock {\em arxiv.org 1306.3674}, 2013.

\bibitem{Rabinovitch2012}
P.~Rabinovitch.
\newblock {\em Uniform and {Mallows} random permutations: inversions, levels,
  and sampling}.
\newblock PhD thesis, Carleton University, 2012.

\bibitem{Durrett2010}
R.~Durrett.
\newblock {\em Probability: {Theory} and examples}.
\newblock Cambridge University Press, fourth edition, 2010.

\bibitem{Alon2000}
N.~Alon and J.H. Spencer.
\newblock {\em The Probabilistic Method}.
\newblock Wiley and Sons, second edition, 2000.

\bibitem{Boucheron2004}
S.~Boucheron, G.~Lugosi, and O.~Bousquet.
\newblock Concentration inequalities.
\newblock {\em Lecture Notes in Computer Science}, 3176:208--240, 2004.

\end{thebibliography}

\end{document}